\documentclass[12pt,a4paper]{amsart}
\usepackage{amsfonts,amsmath,amssymb,amsthm,mathrsfs}
\usepackage{array,latexsym,enumerate,ifpdf}
\usepackage[usenames,dvipsnames]{color}
\usepackage{graphicx,epsfig}
\usepackage[top=2.5cm, bottom=2.5cm, left=2.5cm, right=2.5cm,twoside=false]{geometry}
\usepackage[all]{xy}
\setlength{\marginparwidth}{1.3cm}
\numberwithin{equation}{section}

\makeatletter \def\subsection{\@startsection{subsection}{3}%
  \z@{.5\linespacing\@plus.7\linespacing}{.5\linespacing}%
  {\normalfont\itshape}} \makeatother

\makeatletter \renewenvironment{proof}[1][\proofname]{%
  \par\pushQED{\qed}\normalfont%
  \topsep6\p@\@plus6\p@\relax
  \trivlist\item[\hskip\labelsep\bfseries#1\@addpunct{.}]%
  \ignorespaces}{%
  \popQED\endtrivlist\@endpefalse} \makeatother

\theoremstyle{plain}
\newtheorem{theorem}{Theorem}[section]
\newtheorem{lemma}[theorem]{Lemma}
\newtheorem{proposition}[theorem]{Proposition}
\newtheorem{corrolary}[theorem]{Corollary}

\newtheorem{claim}{Claim} \makeatletter \@addtoreset{claim}{theorem}\makeatother

\theoremstyle{definition}
\newtheorem{definition}[theorem]{Definition}
\newtheorem{example}[theorem]{Example}
\newtheorem{noname}[theorem]{}
\newtheorem{remark}[theorem]{Remark}
\newtheorem{construction}[theorem]{Construction}
\newtheorem{notation}[theorem]{Notation}

\theoremstyle{remark}
\newtheorem*{smallremark}{Remark}
\newtheorem{case}{Case} \makeatletter \@addtoreset{case}{theorem}\makeatother

\newcommand{\bthm}{\begin{theorem}}
\newcommand{\bprop}{\begin{proposition}}
\newcommand{\blem}{\begin{lemma}}
\newcommand{\bcor}{\begin{corrolary}}
\newcommand{\brem}{\begin{remark}}
\newcommand{\bdfn}{\begin{definition}}
\newcommand{\bitem}{\begin{itemize}}
\newcommand{\bex}{\begin{example}}
\newcommand{\bno}{\begin{noname}}
\newcommand{\bsrem}{\begin{smallremark}}
\newcommand{\bnot}{\begin{notation}}
\newcommand{\bcon}{\begin{construction}}
\newcommand{\bca}{\begin{case}}
\newcommand{\bcl}{\begin{claim}}
\newcommand{\beq}{\begin{equation}}

\newcommand{\eeq}{\end{equation}}
\newcommand{\ecl}{\end{claim}}
\newcommand{\eca}{\end{case}}
\newcommand{\econ}{\end{construction}}
\newcommand{\enot}{\end{notation}}
\newcommand{\esrem}{\end{smallremark}}
\newcommand{\eno}{\end{noname}}
\newcommand{\eex}{\end{example}}
\newcommand{\eitem}{\end{itemize}}
\newcommand{\ethm}{\end{theorem}}
\newcommand{\eprop}{\end{proposition}}
\newcommand{\elem}{\end{lemma}}
\newcommand{\ecor}{\end{corrolary}}
\newcommand{\erem}{\end{remark}}
\newcommand{\edfn}{\end{definition}}
\newcommand{\benum}{\begin{enumerate}}
\newcommand{\eenum}{\end{enumerate}}

\newcommand{\wt}{\widetilde}
\newcommand{\cal}[1]{\mathcal{#1}}

\newcommand{\ds}{\displaystyle}
\newcommand{\ti}{\tilde}

\def\8{\infty}
\def\PP{\mathbb{P}}

\def\C{\mathbb{C}}
\def\Z{\mathbb{Z}}

\def\Q{\mathbb{Q}}

\def\xra{\xrightarrow}

\def\:{\colon}
\def\map{\dashrightarrow}

\def\med{\medskip}

\def\Bk{\operatorname{Bk}}

\def\Supp{\operatorname{Supp}}
\def\Pic{\operatorname{Pic}}

\def\Exc{\operatorname{Exc}}
\def\Spec{\operatorname{Spec}}
\def\NS{\operatorname{NS}}
\def\ind{\operatorname{ind}}
\def\tip{\operatorname{tip}}

\ifpdf \usepackage[linkbordercolor={0 0 1}]{hyperref} \else \usepackage[hypertex,linkbordercolor={0 0 1}]{hyperref} \fi

\begin{document}

\title[The Coolidge-Nagata conjecture]{The Coolidge-Nagata conjecture}

\author[Mariusz Koras]{Mariusz Koras}
\address{Mariusz Koras: Institute of Mathematics, University of Warsaw, ul. Banacha 2, 02-097 Warsaw}\email{koras@mimuw.edu.pl}
\author[Karol Palka]{Karol Palka}
\address{Karol Palka: Institute of Mathematics, Polish Academy of Sciences, ul. \'{S}niadeckich 8, 00-656 Warsaw, Poland}
\email{palka@impan.pl}

\thanks{Both authors were supported by the polish National Science Center, the first one from the grant No. 2013/11/B/ST1/02977 and the second one from the grant No. 2012/05/D/ST1/03227}

\subjclass[2000]{Primary: 14H50; Secondary: 14J17, 14E07}
\keywords{Cuspidal curve, rational curve, Cremona transformation, Coolidge-Nagata conjecture, Log Minimal Model Program}

\begin{abstract}Let $E\subseteq \PP^2$ be a complex rational cuspidal curve contained in the projective plane. The Coolidge-Nagata conjecture asserts that $E$ is Cremona equivalent to a line, i.e.\ it is mapped onto a line by some birational transformation of $\PP^2$. In \cite{Palka-Coolidge_Nagata1} the second author analyzed the log minimal model program run for the pair $(X,\frac{1}{2}D)$, where $(X,D)\to (\PP^2,E)$ is a minimal resolution of singularities, and as a corollary he established the conjecture in case when more than one irreducible curve in $\PP^2\setminus E$ is contracted by the process of minimalization. We prove the conjecture in the remaining cases.
\end{abstract}

\maketitle

\section{Main result} All varieties considered are complex algebraic. Two subvarieties $X_1\subset X, X_2\subset X$ are \emph{equivalent in $X$} if there exists an automorphism $\varphi$ of $X$, such that $\varphi(X_1)=X_2$. In case $X$ is rational and $X_1, X_2$ are of codimension $1$ we say they are \emph{Cremona equivalent} if there exists a birational transformation $\varphi$ of $\PP^n$ mapping $X_1$ onto $X_2$. We are interested in studying the way a projective homology line, i.e.\ a curve having singular homology of $\PP^2$, and hence homeomorphic to $\PP^2$ in the Euclidean topology, can be embedded into the projective plane. By the adjunction formula (abstract) projective lines in $\PP^2$ have degree at most two, hence are Cremona equivalent. On the other hand, describing non-equivalent projective homology lines in $\PP^2$ is a hard problem with many connections (see \cite{FLMN}). Because a projective homology line has analytically irreducible singularities, it is nothing else than a rational cuspidal curve. There are infinitely many non-equivalent examples known and we are still far from understanding the situation completely. Here we prove the following conjecture.

\begin{theorem}[The Coolidge-Nagata conjecture]\label{thm:CN} Every complex rational cuspidal curve (i.e.\ every projective homology line) contained in the projective plane is Cremona equivalent to a line.
\end{theorem}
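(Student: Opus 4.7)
The plan is to pick up where \cite{Palka-Coolidge_Nagata1} left off: we run the log MMP on the pair $(X,\tfrac12 D)$, where $(X,D)\to(\PP^2,E)$ is a minimal log resolution, and we analyze precisely the remaining case in which at most one irreducible component of $\PP^2\setminus E$ is contracted by the minimalization process. In that situation the output of the program is essentially already visible on $X$, so one obtains strong numerical constraints linking $K_X+\tfrac12 D$, the boundary $D$, and the exceptional divisor of $X\to\PP^2$. The first step is to translate these constraints into bounds on the number of cusps of $E$, on their multiplicity sequences, and on the degree $d=\deg E$.

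Next I would invoke the log Bogomolov-Miyaoka-Yau type inequality (or its variant for pairs of log general type, as used in the first part), together with the description of $(-2)$-chains in the minimal resolution, to reduce to a short list of candidate numerical types. Because only one curve (or none) gets contracted, the Picard rank cannot drop much, so the minimal model is forced to be either a Mori fiber space of $\PP^1$-bundle type over $\PP^1$ or a log del Pezzo surface of Picard rank one with a single boundary component of small coefficient; in either case the combinatorics of the exceptional divisor together with the coefficient $\tfrac12$ is extremely restrictive. The core computation here is to determine, case by case, the possible dual graphs of the resolution.

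For each surviving numerical type one then exhibits the Cremona equivalence to a line. The standard strategy is to find a pencil of rational curves on $\PP^2$ whose generic member meets $E$ only at one of the cusps, so that after blowing up and contracting the appropriate sections of the induced $\PP^1$-fibration one obtains a Hirzebruch surface in which the strict transform of $E$ is a fiber, hence a line after blowing down. When no such pencil is visible directly, one applies a sequence of quadratic transformations centered at carefully chosen infinitely near points on $E$ to reduce the degree, either matching the curve with one of the known lists of rational cuspidal curves already shown to be Cremona equivalent to a line, or lowering $d$ to a value at which Cremona equivalence is classical.

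The main obstacle I anticipate is precisely this last step: the combinatorial case analysis of the dual graphs compatible with the numerical inequalities coming from the log MMP can be delicate, and for each surviving case one must either produce an explicit birational map or recognize the curve inside a known family (for example the curves in \cite{FLMN} or those parametrized by Fibonacci-type sequences). Keeping the bookkeeping tight between the weighted graph of the log resolution, the coefficient $\tfrac12$, and the discrepancies is where the real work lies; the conceptual input from the MMP analysis is already in place from \cite{Palka-Coolidge_Nagata1}.
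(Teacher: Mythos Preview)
Your proposal has a fundamental structural gap in the final step. The paper's argument, and the only way the constraints you describe are available, is a proof by \emph{contradiction}: one assumes $\bar E$ is \emph{not} Cremona equivalent to a line, and then the Kumar--Murthy criterion (Proposition~\ref{prop:fundamentals}(ii)) gives $2K_X+E\geq 0$, which is the source of all the effectiveness and nefness inputs feeding into the log BMY inequality and the intersection bounds. Every numerical constraint you plan to invoke --- $p_2(\PP^2,\bar E)\in\{3,4\}$, $n\leq 1$, the bound $\ind(D)\leq 5-p_2$, the inequality $(2K_n+E_n)\cdot(2K_n+D_n^\flat)\geq 0$ --- is only valid under this standing assumption.

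Consequently, your proposed last step, ``for each surviving numerical type one then exhibits the Cremona equivalence to a line,'' is incoherent: a surviving type is by hypothesis a curve that is \emph{not} Cremona equivalent to a line, so you cannot hope to produce such an equivalence. What the paper actually does is show that \emph{no} numerical type survives: every candidate configuration of $D_0$ and every possible shape of the minimalization process is eliminated by deriving a numerical contradiction (via BMY, via explicit Hamburger--Noether computations and Lemma~\ref{cor:I and II}, via auxiliary elliptic fibrations as in Lemma~\ref{lem:elliptic_ruling}, or via Corollary~\ref{cor:fE>=4}). The case $n=1$ is killed in Section~\ref{sec:n=1} and the case $n=0$ in Section~\ref{sec:n=0}; in neither case does one construct a Cremona map. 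Your sketch of the reduction to a finite list is roughly right in spirit, but the endgame must be to empty that list, not to treat its entries constructively.
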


The conjecture is traditionally attributed to Coolidge and Nagata, who studied planar rational curves and their behaviour under the action of the Cremona group (see \cite[Book IV,\S II.2]{Coolidge} and Nagata \cite{Nagata}).\footnote{In general, the problem of determining which planar rational curves are Cremona equivalent to a line is known as the  'Coolidge-Nagata problem'.} It appears in an explicit form for instance in \cite[p. 234]{MaSa-cusp}. An analogous problem in the affine case has been solved. Indeed, by a celebrated result of Abhyankar-Moh \cite{AbhMoh_the_line_thm} and Suzuki \cite{Suzuki_AMSthm} every affine line in $\C^2=\Spec \C[x,y]$ is equivalent to $x=0$ and by a result of Lin-Zaidenberg \cite{LinZaid_line} every affine homology line in $\C^2$ other than $\C^1$ is equivalent to one of $x^n=y^m$ for some coprime positive integers $n>m\geq 2$ (see \cite{GurjarMiyanishi_AMS_and_LZ_thms}, \cite{Koras-ab_moh} or \cite{Palka-AMS-LZ} for proofs using the theory of open surfaces). It follows that all affine homology lines in $\C^2$ are Cremona equivalent.

The proof of the conjecture goes as follows. Suppose $\bar E\subset \PP^2$ is a rational cuspidal curve violating the Coolidge-Nagata conjecture. We may assume $\PP^2\setminus \bar E$ is of log general type (see \cite[2.4]{Palka-Coolidge_Nagata1}). Let $\pi_0\:(X_0,D_0)\to (\PP^2,\bar E)$ be a composition of a minimal sequence of blowups, such that the proper transform $E_0\subset X_0$ of $\bar E$ is smooth. This resolution is dominated by a minimal \emph{log resolution} $\pi\:(X,D)\to (\PP^2,\bar E)$ for which $D$, the total reduced transform of $\bar E$, is an snc-divisor. By the criterion of Kumar-Murthy \cite{Kumar-Murthy} (see \ref{prop:fundamentals}(ii)), which strengthens the original criterion by Coolidge, we have $\kappa(K_{X_0}+\frac{1}{2}E_0)\geq 0$, hence $\kappa(K_{X_0}+\frac{1}{2}D_0)\geq 0$. In \cite{Palka-minimal_models} and \cite{Palka-Coolidge_Nagata1} the second author analyzed the log minimal model program run for the pair $(X_0,\frac{1}{2}D_0)$ and he proved that the number $n$ of irreducible curves in $\PP^2\setminus \bar E$ contracted by the process of minimalization is at most one. This established the conjecture in particular in the case when $\bar E$ has more than two cusps. We follow this approach incorporating other tools developed independently by the first author. The key step, Theorem \ref{prop:n=0}, rules out the case $n=1$. The proof is hard, because for $n=1$ bounds coming from the log MMP are weaker. One important ingredient here is that the pushforwards of $2K_{X_0}+D_0$ and $2K_{X_0}+E_0$ on the minimal model are respectively nef and effective, hence their intersection is non-negative. The second one is that by the Kawamata-Viehweg vanishing theorem we have $K_X\cdot (K_X+D)=h^0(2K_X+D)>0$ so, because the process of minimalization is shown not to change the Euler characteristic of the open part of the surface, the logarithmic version of the Bogomolov-Miyaoka-Yau inequality gives strong bounds on the shape of the divisor $D$ (see \ref{lem:mmp_basics}(iv)), and hence on $D_0$ and the singularities of $\bar E$. Still, controlling possible shapes of $D_0$ together with the process of minimalization is the most difficult task. Once this is done, we know that the proces of minimalization of $(X_0,\frac{1}{2}D_0)$ contracts only curves in $D_0$. Then we analyze in turn the process of minimalization of $(X_0,\frac{1}{2}E_0)$ (see \ref{lem:(X,E)}). It produces new $(-1)$-curves, but their intersections with $D_0-E_0$ are harder to control. Often we need to rule out very concrete shapes of $D_0$. This is done using a detailed description of exceptional divisors over cusps of $\bar E$ in terms of their types and Hamburger-Noether pairs (see Subsection \ref{ssec:HN_and_types}).

The classification of projective homology lines in $\PP^2$ up to equivalence, not just up to Cremona equivalence, is a more difficult task. One of the long standing conjectures is that they cannot have more than four singular points. We will use the tools we created to prove the latter conjecture in a forthcoming article.

\

\tableofcontents

\vfill\eject

\section{Preliminaries}\label{sec:Prelim}

\subsection{Surfaces and divisors} We keep the notation of \cite[\S2-\S3]{Palka-Coolidge_Nagata1}, the reader is advised to consult it for details.

Let $X$ be a smooth projective surface. The Picard rank of $X$ is denoted by $\rho(X)$ and the canonical divisor by $K_X$. Fix a reduced effective divisor $D$ on $X$. We define the \emph{discriminant of $D$} as $d(D)=\det(-Q(D))$, where $Q(D)$ is the intersection matrix of $D$. We put $d(0)=1$. By $\#D$ we denote the number of irreducible components of $D$. If $T$ is an (irreducible) component of $D$ we define the \emph{branching number of $T$ in $D$} as $\beta_D(T)=T\cdot (D-T)$. If a reduced effective divisor $T$ equals $T_1+\ldots+T_k$, where $T_i$ are smooth, $T_i\cdot T_{i+1}=1$ and $T_i\cdot T_j=0$ for $j>i+1$, then we call $T$ a \emph{chain}. Curves are always assumed to be irreducible and reduced. When we say two curves meet once, twice, etc.\ we mean that their intersection number is respectively $1, 2$, etc. A chain of rational curves with successive self-intersections $a_1,\ldots,a_k$ is denoted by $[-a_1,\ldots,-a_n]$. A $(-1)$-curve in $D$ is called \emph{superfluous} if it meets at most two other components of $D$, each at most once.

Assume $D$ is not a rational chain. A \emph{twig of $D$} is a chain contained in $D$ which contains a tip of $D$ (a component with $\beta_D\leq 1$) and no branching component of $D$ (no component with $\beta_D\geq 3$). A twig comes with a natural linear order of components in which the tip is the first component. Assuming $T=[a_1,\ldots,a_n]$ is a rational twig (not necessarily maximal) of $D$ with $a_i\geq 2$  we define the \emph{inductance of $T$} as $$\ind (T)=\frac{d(T-\tip(T))}{d(T)}.$$ We put $\ind(0)=0$. We define $\Bk_D T$, the \emph{bark} of $T$ with respect to $D$, as the unique (effective) $\Q$-divisor supported on $\Supp T$, such that $$\Bk_D T\cdot R=\beta_D(R)-2$$ for every component $R$ of $T$, equivalently that $\Bk_D T\cdot R$ equals $-1$ if $R$ is the tip of $D$ contained in $T$ and is zero otherwise. It is easy to show that $(\Bk_D T)^2=-\ind (T).$ A \emph{$(-2)$-twig} is a twig consisting of $(-2)$-curves. By $(2)_{k}$ we denote a sequence $2,2,\ldots,2$ of of length $k$. For instance $[5,(2)_3]=[5,2,2,2]$. The divisors $D$ of most interest to us are snc-boundaries of affine surfaces of log general type. In this case we define $\Bk D$ and $\ind(D)$ as the sums of respective quantities summed up over all maximal rational twigs of $D$. Barks are closely related to the negative part of the Zariski decomposition of $K_X+D$. For a general definition and details the reader should consult \cite[2.3.5]{Miyan-OpenSurf}.

We will need the following form of the logarithmic Bogomolov-Miyaoka-Yau inequality. Let $(X,D)$ be a smooth pair, i.e.\ a pair consisting of a smooth projective surface $X$ and an effective reduced snc-divisor $D$. We say that $D$ and the pair $(X,D)$ are \emph{snc-minimal} if a contraction of any $(-1)$-curve in $D$ maps it onto a divisor which is not snc. For any divisor $T$ on $X$ we denote the Iitaka-Kodaira dimension of $T$ by $\kappa(T)$.

\begin{lemma}\label{lem:BMY} Let $(X,D)$ be a smooth snc-minimal pair, such that $\kappa(K_X+D)\geq 0$. Assume $X\setminus D$ is affine and contains no affine lines. Then $$(K_X+D)^2+\ind(D)\leq 3\chi(X\setminus D).$$
\end{lemma}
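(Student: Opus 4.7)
The plan is to deduce the inequality from the logarithmic Bogomolov--Miyaoka--Yau inequality applied to the positive part of the Zariski decomposition of $K_X+D$, combined with an explicit algebraic identity relating that positive part to $(K_X+D)^2+\ind(D)$. I would write the Zariski decomposition $K_X+D=P+N$, with $P$ nef and $N\geq 0$ having negative-definite intersection matrix and satisfying $P\cdot N_i=0$ for every component $N_i$ of $\Supp N$.

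The first step is to identify $N$ with $\Bk D$. The bark is defined so that $\Bk D\cdot R=\beta_D(R)-2$ for every component $R$ of a maximal rational $(\geq 2)$-twig of $D$ and $\Bk D\cdot R=0$ for any other component; since the intersection matrix of each such twig is negative definite, $\Bk D$ is always a plausible candidate for $N$. Under the stated hypotheses, snc-minimality prevents $(-1)$-components in $D$ from altering the shape of $\Bk D$, while the affineness of $X\setminus D$ together with the absence of affine lines in it rule out the remaining possible contributions to $N$ coming from admissible pieces of $D$ disjoint from its branching components (such as $(-2)$-rods or certain forks). Granting these exclusions, one concludes $N=\Bk D$.

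The second step is the algebraic identity. For any rational component $R$ of a maximal $(\geq 2)$-twig, adjunction yields $(K_X+D)\cdot R=-2+\beta_D(R)$, which coincides with $\Bk D\cdot R$ by the defining property of the bark. Summing with the rational coefficients of $\Bk D$ therefore produces $(K_X+D)\cdot\Bk D=(\Bk D)^2=-\ind(D)$, and expanding $P^2=(K_X+D-\Bk D)^2$ yields the clean formula
\[
P^2=(K_X+D)^2+\ind(D).
\]

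The final step is to invoke the logarithmic Bogomolov--Miyaoka--Yau inequality $P^2\leq 3\chi(X\setminus D)$, valid precisely under the hypotheses that $\kappa(K_X+D)\geq 0$, the open surface $X\setminus D$ is affine, and $X\setminus D$ contains no affine lines; combining it with the identity above gives the desired bound. The main obstacle is concentrated in the first step, namely verifying that beyond the bark no further negative contribution survives in $N$. The identity in the second step is bookkeeping and the BMY input is quoted from the literature, so the lemma reduces to controlling the admissible configurations that could otherwise enlarge $\Supp N$.
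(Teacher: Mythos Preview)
Your proposal is correct and follows essentially the same route as the paper. The paper cites \cite[6.20]{Fujita-noncomplete_surfaces} for your first step (identifying $N=\Bk D$ under snc-minimality and absence of affine lines) and \cite[2.5(ii)]{Palka-exceptional} for the BMY input, after first using affineness to note that $D$ is connected and supports an ample divisor, hence is not negative definite---this is the hypothesis needed for the cited BMY inequality, a point you leave implicit. Your second step gives a slightly more explicit derivation of the identity $P^2=(K_X+D)^2+\ind(D)$ via $(K_X+D)\cdot\Bk D=(\Bk D)^2$, whereas the paper just invokes the orthogonality $P\cdot N=0$ in the Zariski decomposition; both are equivalent.
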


\begin{proof}The divisor $D$ is connected and supports an ample divisor, so it does not have a negative definite intersection matrix. By \cite[2.5(ii)]{Palka-exceptional} $(K_X+D-\Bk D)^2\leq 3\chi(X\setminus D)$. But since $(X,D)$ is snc-minimal and $X\setminus D$ contains no affine lines, \cite[6.20]{Fujita-noncomplete_surfaces} says that $\Bk D$ is the negative part of the Zariski decomposition of $K_X+D$. In particular, $(K_X+D)^+=K_X+D-\Bk D$ and $(K_X+D-\Bk D)^2=(K_X+D)^2-(\Bk D)^2=(K_X+D)^2+\ind(D)$.
\end{proof}

The following lemma will be used frequently to bound the inductance.

\blem\label{lem:ind>=} Let $T=T_1+\ldots+T_k$ be a rational twig of $D$ (with $T_1$ being the tip of $D$), such that $T_i^2\leq -2$ for all $i$. We have:\benum[(i)]
    \item if $k\geq 2$ then $d(T)=(-T_1^2)d(T-T_1)+d(T-T_1-T_2)$,
    \item $\frac{1}{d(T_1)}=\ind (T_1)\leq \ind (T_1+T_2)\leq \ind(T_1+T_2+T_3)\leq \ldots\leq \ind (T)$.
\eenum
\elem

\begin{proof} (i) follows from elementary properties of determinants. (ii) By (i) we have $$\ind([a_1,\ldots,a_n])=1/(a_1-\ind([a_2,\ldots,a_n])),$$ so we prove that $\ind(T)\geq \ind (T-T_k)$ by induction with respect to $k$.
\end{proof}

If $\alpha\:X\to X'$ is a birational morphism of surfaces we put $\rho(\alpha)=\rho(X)-\rho(X')$ and we denote the reduced exceptional divisor of $\alpha$ by $\Exc \alpha$. The proper transform of a curve $C\subset X'$ on $X'$ is denoted by $(\alpha^{-1})_*C$. If for a curve $C\subset X$ we have $C\cdot \Exc \alpha>0$ then we say that $\alpha$ \emph{touches $C$}. In case $C\cdot (\alpha^*\alpha_*C-C)=1$ it \emph{touches $C$ once}.

\subsection{Exceptional divisors over cusps}\label{ssec:HN_and_types}

For the convenience of the reader we repeat, after \cite[\S 3]{Palka-Coolidge_Nagata1}, the definition of Hamburger-Noether pairs (characteristic pairs) associated with a cusp. For a detailed treatment see \cite{Russell2}. As an input data take an analytically irreducible germ of a singular curve $(\chi,q)$ on a smooth surface and a (germ of a) curve $C$ passing through $q$, smooth at $q$. Put $(C_1,\chi_1,q_1)=(C,\chi,q)$, $c_1=(C_1\cdot \chi_1)_{q_1}$, where $(\ \cdot\ )_{q_1}$ denotes the local intersection index at $q_1$, and choose a local coordinate $y_1$ at $q_1$ in such a way that $Y_1=\{y_1=0\}$ is transversal to $C_1$ at $q_1$ and $p_1=(Y_1\cdot\chi_1)_{q_1}$ is not bigger than $c_1$. Blow up over $q_1$ until the proper transform $\chi_2$ of $\chi_1$ intersects the reduced total transform of $C_1+Y_1$ not in a node. Let $q_2$ be the point of intersection and let $C_2$ be the last exceptional curve. Put $c_2=(C_2\cdot \chi_2)_{q_2}$. We repeat this procedure and we define successively $(\chi_i,q_i)$ and $C_i$ until $\chi_{h+1}$ is smooth for some $h\geq 1$. This defines a sequence $$\binom{c_1}{p_1}, \binom{c_2}{p_2}, \ldots, \binom{c_h}{p_h},$$ depending on the choice of $C$. It follows from the definition that $c_i\geq p_i$, $\gcd(c_i,p_i)=c_{i+1}$ (where $c_{h+1}=1$) and that $p_1$ is the first and maximal number in the sequence of multiplicities of $q\in \chi$. Since $\chi$ is irreducible, the total exceptional divisor contains a unique $(-1)$-curve.

Because of the forced condition $c_i\geq p_i$ the sequence is usually longer than the sequence of Puiseux pairs. Although it is defined for any initial curve $C$, in this article we will choose for $C$ a smooth germ maximally tangent to $\chi$ (note that because $\chi$ is singular its intersection with smooth germs passing through $q$ is bounded from above). Then $c_1>p_1$. For this choice of $C$ we refer to the above sequence as the sequence of \emph{Hamburger-Noether pairs} (or \emph{characteristic pairs}) \emph{of the (minimal log) resolution of $(\chi_1,q_1)$}. It is convenient to extend the definition to the case when $(\chi_1,q_1)$ is smooth by defining its sequence of characteristic pairs to be $\binom{1}{0}$. By $\binom{u}{u}_k$ we mean a sequence of pairs $\binom{u}{u},\ldots,\binom{u}{u}$ of length $k$. For $i\leq h$ let $(\mu_j)_{j\in I_i}$ be the non-increasing sequence of multiplicities of successive centers for the sequence of blowups as above leading from $\chi_i$ to $\chi_{i+1}$. The sequence $(\mu_j)_{j\in I_1},\ldots,(\mu_j)_{j\in I_h}$ is the \emph{multiplicity sequence of the singularity $(\chi,q)$}. Note that the composition of blowups corresponding to multiplicities bigger than $1$ is the minimal weak resolution of singularities.

Let now $\pi\:X\to X'$ be a proper birational morphism of smooth surfaces, such that the exceptional divisor $Q=\Exc \pi$ contains a unique $(-1)$-curve $U$. If $U$ is not a tip of $Q$ then $\pi$ is a minimal log resolution of a germ of a singular curve $(\chi,q)$ on $X'$, namely the image of a smooth germ transversal to $U$, so we define the \emph{sequence of characteristic pairs of $Q$} to be the one of $(\chi,q)$. The sequence of characteristic pairs of a zero divisor is defined to be empty. In case $U$ is a tip of $Q$ let $(X,Q)\to (Y,Q')$ be a composition of a minimal number of contractions, say $m$, of $(-1)$-curves in $Q$ and its successive images, such that $Q'$ contains no $(-1)$-tip. If $(\binom{c_i}{p_i})_{i\leq h}$ is the sequence of characteristic pairs for $Q'$ then the sequence of characteristic pairs of $Q$ is by definition $(\binom{c_i}{p_i})_{i\leq h},\binom{1}{1}_m$. The sequence depends only on the intersection matrix of $Q$.

\blem\label{lem:HN_pairs} Let $\pi\:X\to X'$ be a birational morphism of smooth projective surfaces, such that $Q=(\Exc \pi)_{red}$ contains a unique $(-1)$-curve. Let $(\binom{c_i}{p_i})_{i\leq h}$ be the sequence of characteristic pairs of $Q$. If $\Gamma$ is a curve on $X$ meeting $Q$ only in the $(-1)$-curve then:
\begin{align}
-K_{X'}\cdot \pi_*\Gamma+K_{X}\cdot \Gamma &=\ds (Q\cdot \Gamma )(c_{1}+p_1+p_2+\ldots+p_h-1),\\
(\pi_*\Gamma)^2-\Gamma^2 &=(Q\cdot \Gamma)^2(c_1p_1+c_2p_2+\ldots+c_hp_h)
\end{align}
\elem

\begin{proof}Let $(\mu_i)_{i\in I_j}$ be the non-increasing sequence of multiplicities of successive centers for the sequence of blowups as above leading from $\chi_j$ to $\chi_{j+1}$. The corresponding multiplicities for the proper transforms of the germ $(\Gamma,\Gamma\cap Q)$ are $(Q\cdot \Gamma)\mu_i$, so by elementary properties of a blowup we have $$-K_{X'}\cdot \pi_*\Gamma+K_{X}\cdot\Gamma=\ds\sum_{j\in I_i,i\leq h}(Q\cdot \Gamma)\mu_{j}\text{\ \ and\ \ } (\pi_*\Gamma)^2-\Gamma^2=\ds\sum_{j\in I_i,i\leq h}((Q\cdot \Gamma)\mu_{j})^2.$$ By induction with respect to $\max(c_i,p_i)$ we have $$\ds \sum_{j\in I_i}\mu_j=c_i+p_i-\gcd(c_i,p_i) \text{\ \ and\ \ } \ds \sum_{j\in I_i}\mu_j^2=c_ip_i.$$ The lemma follows.
\end{proof}

We now define the notion of a \emph{type} for $Q$, which is especially useful for small values of $K\cdot Q$. Recall that given a reduced snc-divisor $V$ we say that the blowup with a center on $V$ is \emph{inner} (for $V$) if the center belongs to exactly two components of $V$, otherwise it is \emph{outer}. Let's write $\pi\:X\to X'$ as a composition of blowups $\pi=\sigma_1\circ\ldots\circ\sigma_{\#Q}$. We can think of $Q$ as being created from $q=\pi(Q)\in X'$ by the sequence $\sigma_1,\ldots,\sigma_{\#Q}$ of blowups, where we start with the first exceptional divisor and each time we replace it with the subsequent reduced total transform. First of all, we define the type of a zero divisor to be $(0)$.

First assume $U$ is not a tip of $Q$. It follows that the last blowup is inner. Group the members of the above sequence into maximal alternating blocks (of positive length) of outer blowups and of inner blowups. Treat $\sigma_1$ as part of the first block of outer blowups. Let $k_1, r_1, k_1, \ldots r_m$ be the lengths of subsequent blocks of outer and inner blowups respectively. Then $k_1\geq 2$ (because $\sigma_2$ is outer by definition), $\sigma_{(k_1+r_1+\ldots+k_{i})+1},\ldots,\sigma_{(k_1+r_1+\ldots+k_i)+r_i}$ is the $i$'th block of inner blowups and we have $\sum_{i=1}^m (k_i+r_i)=\#Q$. We then say that $Q$ is \emph{of type $(r_1,\ldots,r_m)$}. From the definition it follows that for each $i$ the exceptional divisor of the composition of the blowups belonging to the $i$'th blocks of outer or inner blowups is a chain containing a tip of $Q$ and at most one branching component of $Q$. We call its proper transform on $X$ the \emph{$i$'th branch of $Q$}. The proper transform of the last exceptional curve $\Exc(\sigma_{k_1+r_1+\ldots+k_i+r_i})$ of the $i$'th branch is a branching component of $Q$ if $i<m$ and it is the unique $(-1)$-curve of $Q$ otherwise (which is a branching component of $D$).

If $U$ is a tip of $Q$ then we take the contraction $\alpha\:(X,Q)\to (Y,Q')$ as above and we define the type of $Q$ to be the one of $Q'$. The branches of $Q$ are the proper transforms of branches of $Q'$ and there is one more branch contracted by $\alpha$. Note that the type is a sequence of positive integers of length $1$, unless $Q=[(2)_s,1]$ for some $s\geq 0$.

Recall that the Fibonacci numbers are defined by $F_1=F_2=1$, $F_{n+1}=F_{n}+F_{n-1}$.

\blem\label{lem:type}Let $Q$ be as in \ref{lem:HN_pairs} and let $(r_1,\ldots,r_m)$ be its type. Let $U$ and $\mu(U)$ be respectively the $(-1)$-curve of $Q$ and its multiplicity in $\pi^{-1}(\pi(Q))$. Let $F_n$ denote the $n$'th Fibonacci number. Then:\benum[(i)]
    \item $K\cdot (Q-U)=r_1+\ldots+r_m$,
    \item $\mu(U)\leq F_{K\cdot Q+3}=F_{r_1+\ldots r_m+2}.$
\eenum
\elem

\begin{proof} We prove the statements by induction with respect to the number of components of $Q$. Let $\sigma\:\bar Y\to Y$ be a blowup with a center on the $(-1)$-curve of $Q$ and let $\bar Q$ be the total reduced transform of $Q$. If $\sigma$ is outer for $Q$ then $\bar Q=\sigma^*Q$, so $K_{\bar Y}\cdot \bar Q=K_Y\cdot Q$, hence the intersection with the canonical divisor, the type and the multiplicity are the same for $Q$ and $\wt Q$.

We may therefore assume $\sigma$ is inner for $Q$. Then $\bar Q$ is of type $(r_1,\ldots,r_m+1)$ and $\bar Q=\sigma^*Q-\Exc \sigma$, so $K_{\bar Y}\cdot \bar Q=K_Y\cdot Q+1=r_1+\ldots+r_{m-1}+(r_m+1)$, which proves (i). Let $\bar U$ be the $(-1)$-curve of $\bar Q$, let $U'$ be the component of $\bar Q-\bar U-(\sigma^{-1})_*U$ meeting it and let $\sigma'\:(Y,Q)\to (Y',Q')$ be the contraction of $U$. Denoting by $\mu_T(\ )$ the multiplicity of a component in the irreducible decomposition of a divisor $T$ we have $\mu_{\bar Q}(\bar U)=\mu_Q(U)+\mu_{Q'}(U')\leq F_{K_Y\cdot Q+3}+F_{K_{Y'}\cdot Q'+3}\leq F_{K_Y\cdot Q+4}=F_{K_{\bar Y}\cdot {\bar Q}+3}$. For the induction to work it remains to prove the inequality in case $Q=[1]$ and $Q=[3,1,2]$. In the former we have $K\cdot Q+3=2$ and $\mu(U)=1=F_2$ and in the latter $K\cdot Q+3=3$ and $\mu(U)=2=F_3$.
\end{proof}

\blem\label{lem:Q_with_small_KQ}  Let $Q$ be a rational chain which is contractible to a smooth point and contains a unique $(-1)$-curve. Let $k$ denote a non-negative integer. Then $Q$ is of type $(r)=K\cdot Q+1$ and if $r\leq 4$ then $Q$ is one of the following: \benum[(i)]

\item $r=0$: $[(2)_k,1]$.

\item $r=1$: $Q=[(2)_k,3,1,2]$.

\item $r=2$: (iii.1) $[(2)_k,4,1,2,2]$,  (iii.2) $[(2)_k,3,2,1,3]$.

\item $r=3$: (iv.1) $[(2)_k,5,1,2,2,2]$,  (iv.2) $[(2)_k,4,2,1,3,2]$, (iv.3) $[(2)_k,3,3,1,2,3]$, (iv.4) $[(2)_k,3,2,2,1,4]$.

\item $r=4$: (v.1) $[(2)_k,6,1,2,2,2,2]$, (v.2) $[(2)_k,5,2,1,3,2,2]$, (v.3) $[(2)_k,4,3,1,2,3,2]$, (v.4) $[(2)_k,4,2,2,1,4,2]$, (v.5)  $[(2)_k,3,4,1,2,2,3]$, (v.6) $[(2)_k,3,3,2,1,3,3]$,\\ (v.7) $[(2)_k,3,2,3,1,2,4]$ (v.8) $[(2)_k,3,2,2,2,1,5]$.
\eenum
\elem

\begin{proof} As we have seen in the proof of \ref{lem:type}, an inner blowup increases $K\cdot Q$ by $1$. For every chain of type $(r)$ with $r\geq 1$ there are exactly two choices of the center of an inner blowup to produce a chain with a unique $(-1)$-curve of type $(r+1)$. The lemma follows.
\end{proof}

\subsection{Cuspidal curves}\label{ssec:cuspidal} From now on let $\bar E\subseteq \PP^2$ be a rational cuspidal curve and let $\pi\:(X,D)\to (\PP^2,\bar E)$ be the minimal log resolution of singularities. By definition $X$ is a smooth projective surface and $D$ is a simple normal crossing divisor which contains no superfluous $(-1)$-curves, i.e.\ $(-1)$-curves which meet at most two other components of $D$, each at most once. The proper transform of $\bar E$ on $X$ is denoted by $E$ and the minimal weak resolution of singularities ('weak' means that we only require the proper transform of $\bar E$ to be smooth) by $\pi_0\:(X_0,D_0)\to (\PP^2,\bar E)$. Clearly, there exists a morphism $\psi_0\:(X,D)\to (X_0,D_0)$, such that $\pi_0\circ\psi_0=\pi$. Let $q_1,\ldots,q_c$ be the cusps of $\bar E$ and let $Q_j$ be the exceptional reduced divisors of $\pi$ over $q_j$. We have $D-E=Q_1+\ldots+Q_c$, $\wt Q_j=\psi_0(Q_j)$ and $E_0=\psi_0(E)$.

 A cusp of $\bar E$ which is locally analytically isomorphic to the singular point of $x^2=y^{2k+3}$ at $0\in\Spec \C[x,y]$ for some $k\geq 0$ is called \emph{semi-ordinary}. It is \emph{ordinary} in case $k=0$. For a semi-ordinary cusp $Q_j=[(2)_{k},3,1,2]$ and $\wt Q_j=[1,(2)_{k}]$ with the $(-1)$-curve of $\wt Q_j$ being tangent to $E_0$.

\blem[\cite{Palka-Coolidge_Nagata1}, 3.3]\label{cor:I and II} Let $\rho\:(Y,D_Y)\to (\PP^2,\bar E)$ be any weak resolution of singularities such that for each $j$ the divisor $Q_j=\rho^{-1}(q_j)$ contains a unique $(-1)$-curve and $E_Y=(\rho^{-1})_*\bar E$ meets $Q_j$ only in this $(-1)$-curve. Let $(\binom{c_{j,i}}{p_{j,i}})_{i\leq h_j}$ be the sequence of characteristic pairs of $Q_j$. Put $\gamma_Y=-E_Y^2$, $d=\deg \bar E$, $\rho_j=Q_j\cdot E_Y$, $M(q_j)=c_{j,1}+\ds \sum_{i=1}^{h_j}p_{j,i}-1$ and $I(q_j)=\ds \sum_{i=1}^{h_j}c_{j,i}p_{j,i}$. Then \benum[(i)]

\item $\gamma_Y-2+3d=\ds \sum_j \rho_j M(q_j),$

\item $\gamma_Y+d^2=\ds \sum_j \rho_j^2 I(q_j),$

\item $(d-1)(d-2)=\ds \sum_j \rho_j(\rho_j I(q_j)-M(q_j)).$
\eenum \elem

For a proof of the following remark see for instance \cite[3.5]{Palka-Coolidge_Nagata1}.

\brem\label{rmk:ind>half} For every $j$ the contribution of the maximal twigs of $D$ contained in the first branch of $Q_j$ is strictly bigger than $\frac{1}{2}$. \erem

\bprop[\cite{Palka-Coolidge_Nagata1}, 2.4]\label{prop:fundamentals} Assume $\bar E\subset \PP^2$ is a rational cuspidal curve violating the Coolidge-Nagata conjecture. Put $p_2(\PP^2,\bar E)=h^0(2K_X+D)$. Then:
\benum[(i)]
    \item  $\PP^2\setminus \bar E=X\setminus D$ is a $\Q$-acyclic surface of log general type.
    \item $2K_X+E\geq 0$. In particular, $p_2(\PP^2,\bar E)>0$ and $\kappa(K_X+\frac{1}{2}D)\geq 0$.
    \item $\PP^2\setminus \bar E$ contains no curve isomorphic to $\C^1$.
    \item $K_X\cdot (K_X+D)=p_2(\PP^2,\bar E)$.
\eenum\eprop

\bcor\label{cor:fE>=4} Let $\alpha\:\PP^2\map Z$ be a rational map to a smooth surface, such that $E_Z=\alpha_*\bar E\neq 0$. \benum[(i)]
    \item If $f$ is a fiber of a $\PP^1$-fibration of $Z$ then $f\cdot E_Z\geq 4$.
    \item If $f=U_1+U_2+\ldots+U_k$ where $U_i$'s are distinct, $U_1,U_k$ are $(-1)$-curves and $U_2,\ldots,U_{k-1}$ are $(-2)$-curves such that $U_i\cdot U_{i+1}>0$ for each $i<k$ then $f\cdot E_Z\geq 4$.
    \item If $f=U_1+2U_2+U_3$ where $U_1,U_3$ are $(-2)$-curves and $U_2$ is a $(-1)$-curve meeting $U_1$ and $U_3$ then $f\cdot E_Z\geq 4$.
    \item  If $E_Z$ is smooth then $E_Z^2\leq -4$.
\eenum
\ecor

\begin{proof} Let $\ti\alpha\:(\ti Z,E_{\ti Z})\to (Z,E_Z)$, where $E_{\ti Z}$ is the proper transform of $\bar E$, be a resolution of base points of $\alpha$ which dominates the minimal log resolution $(X,D)$. By \ref{prop:fundamentals}(ii) $2K_X+E\geq 0$, so $2K_{\ti Z}+E_{\ti Z}\geq 0$ and hence $2K_Z+E_Z\geq 0$. It follows that for every nef divisor $f$ we have $f\cdot E_Z\geq -2f\cdot K_Z$. For (i) we may assume $f$ is a smooth $0$-curve, hence $f\cdot E_Z\geq -2f\cdot K_Z=4$. In (ii) and (iii) $f$ is nef (note the assumptions do not imply that $f_{red}$ is a chain), which easily gives the inequalities. For (iv), since $E_Z$ is not in the fixed part of $|2K_Z+E_Z|$, we have $0\leq E_Z\cdot (2K_Z+E_Z)=2E_Z\cdot (K_Z+E_Z)-E_Z^2$, so $E_Z^2\leq 4(p_a(E_Z)-1)=-4$.
\end{proof}

\blem[\cite{Palka-Coolidge_Nagata1} 4.6] \label{lem:min(mu)>=4} If $\bar E\subset \PP^2$ violates the Coolidge-Nagata conjecture and the cusps $q_2,\ldots, q_c\in \bar E$ have multiplicity two (equivalently, they are semi-ordinary) then $q_1$ has multiplicity at least four.
\elem

\subsection{Elliptic fibrations}We will need information about fibers of elliptic fibrations.

\blem \cite[Theorem 3.3]{Kumar-Murthy}. \label{lem:elliptic_ruling} Let $E$ be a smooth rational curve on a smooth rational surface $Y$. If $E^2=-4$ and $C$ is a $(-1)$-curve for which $E\cdot C=2$ then $|E+2C|$ induces an elliptic fibration of $Y$. In particular, if $2K_Y+E\sim 0$ then any $(-1)$-curve on $Y$ gives such a fibration. Moreover, in the latter case the fibration has no section and singular fibers other than $E+2C$ consist of $(-2)$-curves. \elem

A fiber of an elliptic fibration is \emph{minimal} if it contains no $(-1)$-curves.

\blem\label{lem:elliptic_fibers} Let $p\:X\to B$ be an elliptic fibration of a smooth projective surface $X$ and let $F$ be a reduction of some singular fiber. \benum[(i)]
    \item If $F$ is minimal and reducible then it  consists of $(-2)$-curves.
    \item If $F$ is minimal but not snc then its snc-minimal resolution either consists of a $(-4)$-curve and a $(-1)$-curve meeting twice or it is a rational tree consisting of a branching $(-1)$-curve and three tips of self-intersections $-d_1,-d_2,-d_3$ with $d_i>0$ and $\sum\frac{1}{d_i}=1$.
    \item All fibers of $p$ are minimal if and only if $K_X^2=0$.
\eenum \elem

\begin{proof} For (i), (iii) and the Kodaira classification of singular fibers see \cite[\S V.7]{BHPV}. (ii) From the classification of singular fibers we know that $F$ is either a nodal or unicuspidal rational curve or a pair of tangent lines or a triple of lines meeting at a common point. We check easily that after resolving singularities of these divisors we get divisors as above.
\end{proof}

\subsection{The log MMP for $(X_0,\frac{1}{2}D_0)$}\label{ssec:logMMP}
Assume $\bar E\subset \PP^2$ violates the Coolidge-Nagata conjecture. By \ref{prop:fundamentals}(i) the complement $\PP^2\setminus \bar E$ is of log general type. In \cite[\S3]{Palka-minimal_models} we studied minimal models related to minimal weak resolutions $\pi_0\:(X_0,D_0)\to (\PP^2,\bar E)$ of such curves. For a detailed discussion in the context of the Coolidge-Nagata conjecture see \cite[\S4]{Palka-Coolidge_Nagata1}. Let us recall some properties of these models. For our purposes the following definitions will be sufficient.

A (rational) chain $S=[2,\ldots,2,3,1,2]$ is a \emph{semi-ordinary ending} of a reduced effective divisor $T$ if $S\cdot (T-S)=1$ and the unique intersection point belongs to the $(-1)$-curve of $S$. Note that $D_0$ has no semi-ordinary endings, because all its $(-1)$-curves are tangent to $E_0$. On the other hand, $D$ has a semi-ordinary ending whenever $\bar E$ has a semi-ordinary cusp. By a line on a complex affine surface we mean any curve isomorphic to $\C^1$. Starting from $(X_0,D_0)$ we will define a sequences of log surfaces $(X_i,D_i)$, $i=0,\ldots,n$. We need some notation.

\bnot\label{def:Delta,Upsilon,Dflat} Let $(X_i,D_i)$ be a pair consisting of a smooth projective surface $X_i$ and a reduced $\Z$-divisor $D_i$ with smooth components and no superfluous $(-1)$-curves, such that $X_i\setminus D_i$ is affine and $D_i$ contains no semi-ordinary endings \benum[(i)]

\item By $\Delta_i$ we denote the sum of all maximal $(-2)$-twigs of $D_i$.

\item Let $\Upsilon_i$ be the sum of $(-1)$-curves $L$ in $D_i$, for which either $\beta_{D_i}(L)=3$ and $L\cdot \Delta_i=1$ or $\beta_{D_i}(L)=2$ and $L$ meets exactly one component of $D_i$.

\item Decompose $\Delta_i$ as $\Delta_i=\Delta^+_i+\Delta^-_i$, where $\Delta^+_i$ consists of these $(-2)$-twigs of $D_i$ which meet $\Upsilon_i$.

\item Put $D_i^\flat=D_i-\Upsilon_i-\Delta^+_i-\Bk_{D_i} \Delta^-_i$.
\eenum\enot

Put $K_i=K_{X_i}$. As for now we have defined only $(X_0,D_0)$. Here the assumptions stated above are satisfied by \ref{prop:fundamentals}. The divisor $\Upsilon_0+\Delta_0^+$ consists of exceptional divisors of $\pi_0$ over semi-ordinary cusps, with $\Upsilon_0$ consisting of the $(-1)$-curves. To define $(X_i,D_i)$ for $i>0$ we proceed as follows.

\blem[\cite{Palka-minimal_models}, 3.5, 4.1] \label{lem:Ai} Let $(X_i,D_i)$ be as in \ref{def:Delta,Upsilon,Dflat}. Assume that $X_i\setminus D_i$ contains no lines and that $\kappa(K_{X_i}+\frac{1}{2}D_i)\geq 0$. Assume also that the components of $\Upsilon_i$ are disjoint and each meets at most one connected component of $\Delta_i^+$. \benum[(i)]
    \item If $2K_i+D_i^\flat$ is not nef then there exists a $(-1)$-curve $A_i\not\subset D_i$, such that \beq\label{eq:Ai} A_i\cdot (\Upsilon_i+\Delta_i^+)=0 \text{\ \ and\ \ } A_i\cdot (D_i-\Delta^-_i)=A_i\cdot\Delta^-_i=1,\eeq and the component of $\Delta^-_i$ meeting $A_i$ is a tip of $\Delta^-_i$.
    \item If $\psi_{i+1}\:(X_i,D_i)\to (X_{i+1},D_{i+1})$ is the composition of successive contractions of superfluous $(-1)$-curves in $D_i+A_i$ and its images then $(X_{i+1},D_{i+1})$  satisfies the above assumptions.
\eenum
\elem

The lemma follows essentially from the fact that if $\alpha_i\:(X_i,D_i)\to (Y_i,D_{Y_i})$ is the contraction of $\Upsilon_i+\Delta_i$ then either $K_{Y_i}+\frac{1}{2}D_{Y_i}$ is nef, and hence $\alpha^*_i(K_{Y_i}+\frac{1}{2}D_{Y_i})=K_i+\frac{1}{2}D_i^\flat$ is nef, or there is a log extremal $(K_{Y_i}+\frac{1}{2}D_{Y_i})$-negative ray not contained in $D_{Y_i}$. In the latter case one takes the lift of the ray for $A_i$. Clearly, the process stops at some point, because contractions decrease the Picard rank. We denote the index $i$ of the first $2K_i+D_i^\flat$ which is nef by $n$ and refer to it as the \emph{length} of the process of minimalization $\psi=\psi_n\circ\ldots\circ\psi_1\:(X_0,\frac{1}{2}D_0)\to (X_n,\frac{1}{2}D_n)$. Note that $X_{i+1}\setminus D_{i+1}$ is an open subset of $X_i\setminus D_i$ with the complement isomorphic to $\C^*$. By construction $\psi$ does not contract $E_0$. We put $E_{i+1}=\psi_{i+1}(E_i)$. Let $\varphi_i\:(X_i',D_i')\to (X_i,D_i)$ be the minimal log resolution. We have $\varphi_0=\psi_0$. We define $\psi_{i+1}'\:(X_i',D_i')\to (X_{i+1}',D_{i+1}')$ as a lift of $\psi_i$. We put $E'_{i+1}=\psi_{i+1}(E_i')$, where $E_0'=E$. We obtain a commutative diagram:

$$ \xymatrix{{} & {(X,D)}\ar@{>}[r]^-{\psi_1'}\ar@{>}[d]^{\psi_0=\varphi_0}\ar@{>}[ld]_-{\ \pi} &{(X_1',D_1')}\ar@{>}[r]^-{\psi_2'\ }\ar@{>}[d]^{\varphi_1} &{\ldots}\ar@{>}[r]^-{\psi_n'\ } &{(X_n',D_n')}\ar@{>}[d]^{\varphi_n}\\
 {(\PP^2,\bar E)} & {(X_0,D_0)}\ar@{>}[r]^-{\psi_1}\ar@{>}[l]^-{\ \pi_0} &{(X_1,D_1)}\ar@{>}[r]^-{\psi_2\ } &{\ldots}\ar@{>}[r]^-{\psi_n\ } &{(X_n,D_n)}}$$

Since $\PP^2\setminus \bar E$ is $\Q$-acyclic, we have $\#D_0=\rho(X_0)$. Then the definition of $\psi_{i+1}$ gives \beq \#D_i=\rho(X_i)+i\label{eq:rho(Xi)+i=|D|}\eeq and, because the contractions in $\psi_{i+1}$'s are inner for $D_i+A_i$,
\beq \label{eq:psi_is_innner} \psi_{i+1}^*(K_{i+1}+D_{i+1})=K_i+D_i+A_i.\eeq

\bdfn We call the pairs $(Y_n,\frac{1}{2}D_{Y_n})$ and $(X_n',\frac{1}{2}D_n')$ constructed above a \emph{minimal model} and respectively an \emph{almost minimal model} of $(X_0,\frac{1}{2}D_0)$. \edfn

Note that at each step there may bo more than one choice for $A_i$, and hence the length ($n$) and the pair $(X_n,D_n)$ refer to some fixed process of minimalization. We simply work with a fixed choice.

The divisor $D_0$, which we will be working with, has smooth components but some of them are tangent to $E_0$. This choice is related to the Kumar-Murthy criterion \ref{prop:fundamentals}(ii) and the special role played by the component $E_0$. We introduce the following numbers controlling the degree of tangency.

\bnot Let $q_1,\ldots, q_c$ be the cusps of $\bar E$.  Assume $j\in \{1,\ldots,c\}$.
\benum[(i)]
    \item Write $\tau_j$ for the number of times $\psi_0$ touches $E$. Equivalently, $\tau_j$ is the number of curves over the cusp $q_j$ contracted by $\psi_0$.
    \item We put $s_j=1$ if $\psi_0$ contracts a twig of $D$ over $q_j$ (which is necessarily a $(-2)$-twig) and $s_j=0$ otherwise.
    \item Put $\tau_j^*=\tau_j-s_j-1$, $\tau^*=\sum_{j=1}^c \tau_j^*$ and $s=\sum_{j=1}^c s_j$.
    \item For $k=0,1$ let $n_k$ be the number of contracted $A_i$'s, i.e.\ the $(-1)$-curves defined above, for which $A_i\cdot E_i=k$.
\eenum
\enot

Because $\tau_j\geq 2$, we have $\tau_j^*\geq 0$. By \eqref{eq:Ai} $A_i\cdot E_i\leq 1$, so $n=n_0+n_1$. Elementary computations (\cite[4.3]{Palka-minimal_models}) give \beq\label{eq:Kn(Kn+Dn)} K_n\cdot (K_n+D_n)=p_2(\PP^2,\bar E)-c-\tau^*-n \eeq and \beq\label{eq:En(Kn+Dn)} E_n\cdot (K_n+D_n)=2c-2+\tau^*+n_1.\eeq

\vskip 0.5cm
In principle, the length of the minimalization process of $(X_0,D_0)$ could be arbitrarily big. The following result proved in Part I of the article shows in particular that this is not so.

\bthm[\cite{Palka-Coolidge_Nagata1}, 1.1, 1.2, 1.3, 5.4, 7.3]\label{thm:CN1} Let $(X_0,D_0)\to (\PP^2,\bar E)$ be the minimal weak resolution of a rational cuspidal curve  violating the Coolidge-Nagata conjecture. Put $p_2(\PP^2,\bar E)=h^0(2K_X+D)$. Then \  \benum[(a)]
    \item the length of the minimalization process of $(X_0,\frac{1}{2}D_0)$ is at most $1$, i.e.\ $n\leq 1$.
    \item Either $p_2(\PP^2,\bar E)=3$ or $p_2(\PP^2,\bar E)=4$, $n=0$ and $\bar E$ has only one cusp.
    \item $\bar E$ has at most two cusps.
    \item $|K_n\cdot (K_n+E_n)|\leq p_2(\PP^2,\bar E)-2$.
\eenum\ethm

The core of the proof of \ref{thm:CN1} is the inequality \beq\label{eq:basic_inequality} (2K_n+E_n)\cdot (2K_n+D_n^\flat)\geq 0,\eeq which follows from the fact that $2K_n+E_n$ is effective (due to the Kumar-Murthy criterion \ref{prop:fundamentals}(ii)) and $2K_n+D_n^\flat$ is nef by the construction of $(X_n,D_n)$.

\med 
\section{Process of length $n=1$.}\label{sec:n=1} We keep the notation from the previous section. From now on we assume, for a contradiction, that $\bar E\subseteq \PP^2$ violates the Coolidge-Nagata conjecture, i.e.\ it is a rational cuspidal curve which is not Cremona equivalent to line. By \ref{prop:fundamentals} $\PP^2\setminus \bar E$ is of log general type and $2K_X+E\geq 0$.  Recall that $\pi_0\:(X_0,D_0)\to (\PP^2,\bar E)$ is the minimal weak resolution of singularities and $n$ is the length of the process of minimalization of $(X_0,\frac{1}{2}D_0)$ as defined in Section \ref{sec:Prelim}. In this section we prove the following theorem.

\bthm\label{prop:n=0} If $\bar E\subseteq \PP^2$ violates the Coolidge-Nagata conjecture then $2K_0+\frac{1}{2}D_0^\flat$ is numerically effective, i.e.\ $n=0$.
\ethm

By \ref{thm:CN1}(a) in the process of minimalization of $(X_0,\frac{1}{2}D_0)$ at most one curve not contained in $D_0$ is contracted, i.e.\ $n\leq 1$. Up to the end of this section we suppose therefore that $n=1$. In particular $\psi_1=\psi$ and $\psi_1'=\psi'$. We put $A=A_0$ and we denote the unique $(-2)$-twig of $D_0$ met by $A$ by $\Delta_A$. The component of $\Delta_A$ meeting $A$ (and its proper transform on $X$) is denoted by $T_A$ (see Fig.\ \ref{fig:1}). By \ref{lem:Ai} $T_A$ is a tip of $\Delta_A$, but not necessarily a tip of $D_0$. The proper transform of $\Delta_A$ on $X$ (which is also a maximal $(-2)$-twig) will be also denoted by $\Delta_A$. Since $\psi_0\:X\to X_0$ does not touch $A$, its proper transform on $X$, which we denote by $A'$, is also a $(-1)$-curve and it meets exactly one $(-2)$-twig of $D$.
\begin{figure}[h]\centering\includegraphics[scale=0.5]{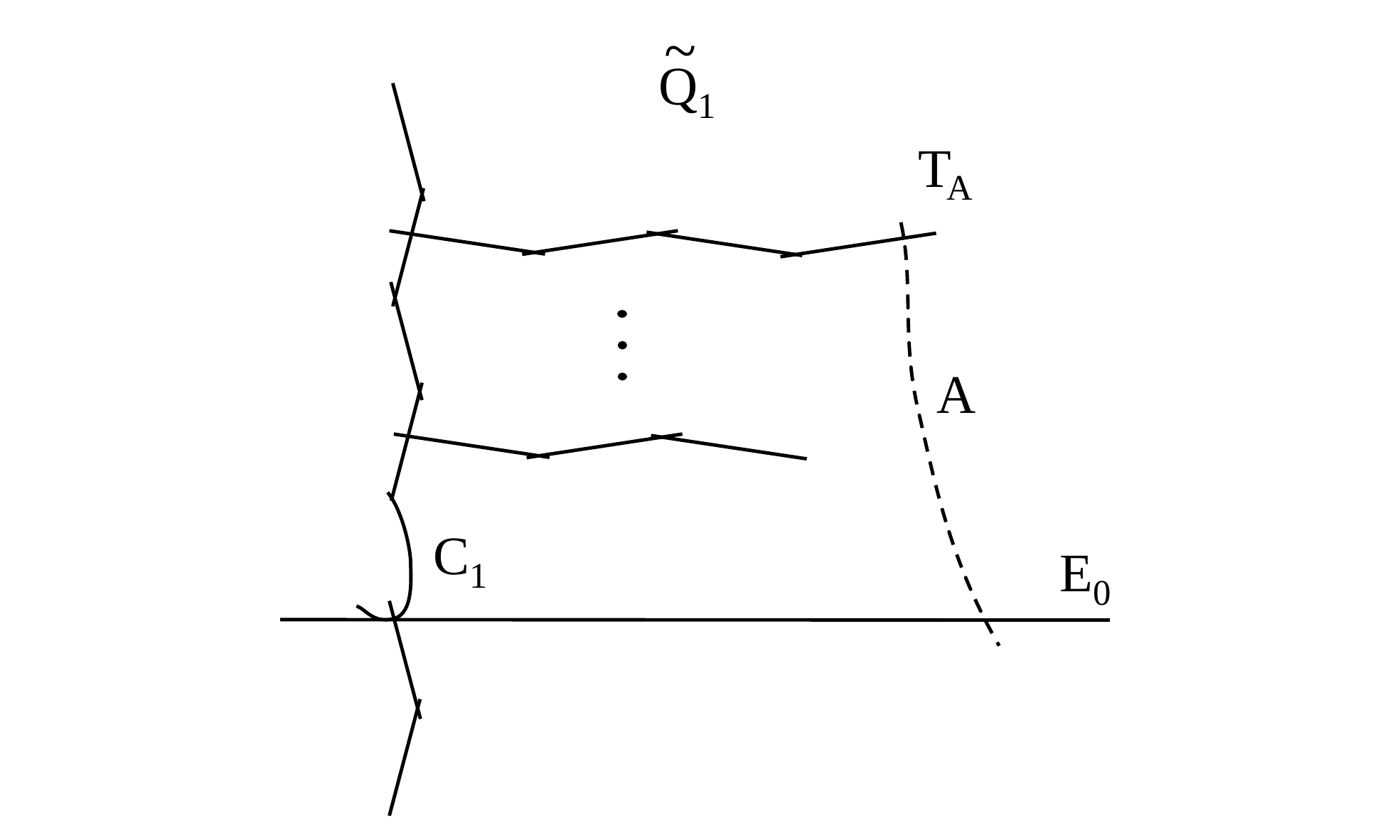}\caption{The divisor $D_0=\wt Q_1+\ldots+\wt Q_c+E_0$ on $X_0$. Case $c=A\cdot E_0=1$.}  \label{fig:1}\end{figure}

We have now the following diagram, with $2K_1+D_1^\flat$ being numerically effective.

$$ \xymatrix{{} & {(X,D)}\ar@{>}[r]^-{\psi'}\ar@{>}[d]^{\psi_0}\ar@{>}[ld]_-{\ \pi} &{(X_1',D_1')}\ar@{>}[d]^{\varphi_1} \\
 {(\PP^2,\bar E)} & {(X_0,D_0)}\ar@{>}[r]^-{\psi}\ar@{>}[l]^-{\ \pi_0} &{(X_1,D_1)}}$$

By the definition of $\psi$ we see that all components of $D_0$ contracted by $\psi$ are contained in maximal twigs of $D_0$ meeting $A$. We say that $\psi$ is of \emph{type II} if it contracts both components of $D_0$ meeting $A$; otherwise it is of \emph{type I}. Contractions of type II are difficult to analyze, because in principle they may contract both maximal twigs met by $A$, including components of very negative weights, in which case it is harder to recover them having only the information about the minimal model $(X_1,D_1)$.

\bnot Let $q_1,\ldots, q_c$ be the cusps of $\bar E$.  Assume $j\in \{1,\ldots,c\}$. \benum[(i)]
    \item Denote by $C_j$ the $(-1)$-curve of $D_0-E_0$ over $q_j\in \bar E$. Put $\cal C=C_1+\ldots+C_c$.
    \item Put $C_j'=\psi_*C_j$ ($C_j'\neq 0$ as every $C_j$ is tangent to $E_0$).
    \item Let $\cal C_+$ and $\cal C_{exc}$ be the sums of these $C_j'$'s whose self-intersection is non-negative or stays equal to $(-1)$ respectively.
    \item Denote by $\cal L$ the sum of (non-superfluous) $(-1)$-curves in $D_1$ not contained in $\cal C_{exc}$.
    \item Put $$R_1=D_1-E_1-\cal C_+-\cal C_{exc}-\cal L.$$
\eenum
\enot

Note that $C_j'$ is a component of $\cal C_+$ whenever $\psi$ touches $C_j$. We refer to the components of $\cal L$ as $(-1)$-curves \emph{created by $\psi$}. We will see that for $n\leq 1$ there is in fact at most one component in $\cal L$. Let us recall some basic properties of the process of minimalization.

\blem\label{lem:mmp_basics} Let $(X_i,D_i)$ and $(X_i',D_i')$, $i=0,1$ be as above. \benum[(i)]
    \item $D_1'$ is snc-minimal,
    \item For every component $U$ of $D_i-\Delta_i$ we have $U\cdot \Delta_i\leq 1$.
    \item For every component $U$ of $D_0-E_0$ $$\psi(U)\cdot E_1\leq U\cdot E_0+1.$$ If the equality holds then $\psi$ is of type $I$ and $A\cdot E_0=1$. Moreover, $\psi$ touches $U$ exactly once and either $U$ is the component of $\Delta_0^-$ met by $A$ or there is a unique connected component of $\Delta_0^-$ meeting $U$ and this component is contracted by $\psi$.
    \item $\ind(D_i')\leq 5-p_2(\PP^2,\bar E)-i$,
    \item $\psi$ creates at most one $(-1)$-curve, i.e.\ $\#\cal L\leq 1$.
    \item Components of $R_1$ intersect non-negatively with $K_1$.
    \item If $(C_j')^2\geq 0$ then $K_1\cdot C_j'+\tau^*_j\geq 0$.
    \item If $\cal L$ (as above) is a component of $\Upsilon_1$ then $(\psi^{-1})_*\cal L\cdot E_0=0$ and $\cal L\cdot E_1=A\cdot E_0\leq 1$.
\eenum
\elem

\begin{proof} For (i) (which relies on the fact that $2K_X+D\geq 0$) and (ii)-(iii) see \cite{Palka-minimal_models} 3.7 and 4.1(vi)-(vii) respectively. For (iv), which follows mostly from \ref{lem:BMY} and \ref{prop:fundamentals}(iv) see \cite[4.2(iv)]{Palka-Coolidge_Nagata1}. For (v)-(vii) see 4.3(ii),(v)-(vi) loc.\ cit.\ (in all cases the fact that $2K_X+E\geq 0$ is used). For (viii) put $\cal L'=(\psi^{-1})_*\cal L$ and suppose that $\cal L'\cdot E_0>0$. Since $\cal L'$ is not one of $C_j$'s, it meets, say, $C_1$ at its intersection point with $E_0$, so $\cal L'\cdot E_0=\cal L'\cdot C_1=1$. If $\beta_{D_1}(\cal L)=2$ then $A\cdot \cal L'=0$ and $\cal L'$ meets no twig of $D_0$, hence $\psi$ does not touch $\cal L'$, which is impossible. Therefore, $\beta_{D_1}(\cal L)=3$, so since $\cal L$ is a component of $\Upsilon_1$, it meets some connected component $T$ of $\Delta_1^+$. Because $\psi$ is inner for $D_0+A$, the point $\psi(A)$ does not lie on $T$, because otherwise $D_0+A$ would not be connected. It follows that $\psi^*T$ is a connected component of $\Delta_0^-$ meeting $\cal L'$ and not touched by $\psi$. But then $\beta_{D_0}(\cal L')\geq 3$ and, since $\psi$ does not contract $C_1$ or $E_0$, it follows that $\psi$ does not touch $\cal L'$; a contradiction. Thus, $\cal L'\cdot E_0=0$. By (iii) $\cal L\cdot E_1\leq 1$ and equality holds if and only if $A\cdot E_0=1$.
\end{proof}

\bnot Put $\eta=\#\Upsilon_1-\#\Upsilon_0$. Note that $\#\Upsilon_0$ is the number of semi-ordinary cusps of $\bar E$ and that $\eta\leq 1$. If $\eta=1$ then $\Upsilon_1=\psi_*\Upsilon_0+\cal L$ and by \ref{lem:mmp_basics}(viii) $\cal L\cdot E_1=(\psi^{-1})_*\cal L\cdot E_0\leq 1$. Define $\eta_0$ to be $1$ if $\eta=1$ and $\cal L\cdot E_1=0$, and to be $0$ otherwise. Put $\eta_1=\eta-\eta_0$. Put $\gamma_i=-E_i^2$. By \ref{cor:fE>=4}(iv) $\gamma_0\geq\gamma_1\geq 4$.
\enot

\med Because $n=1$, we obtain the following corollary.

\bcor\label{cor:n=1_basics} Put $\zeta=K_1\cdot (K_1+E_1)$. We have: \benum[(i)]
    \item $p_2(\PP^2,\bar E)=3$, and hence $\ind(D)\leq 2$ and $\ind(D_1')\leq 1$,
    \item $\zeta\in\{-1,0,1\},$
    \item $\gamma_1+\tau^*+(n_0-\eta_0)\leq 5+2\zeta+\eta$.
    \item $\ds \sum_{j:(C_j')^2\geq 0}(K_1\cdot C_j'+\tau_j^*)+\#\cal C_++\ds\sum_{j:(C_j')^2=-1}\tau_j^*+K_1\cdot R_1=2+\#\cal L-\zeta,$
\eenum
\ecor

\begin{proof}(i) follows from \ref{thm:CN1}(b) and \ref{lem:mmp_basics}(iv). (ii) follows from \ref{thm:CN1}(d) and (a).

(iii) By \eqref{eq:basic_inequality} $$0\leq (2K_1+D_1^\flat)\cdot(2K_1+E_1)=(2K_1+D_1)\cdot(2K_1+E_1)-\Upsilon_1\cdot(2K_1+E_1).$$ We have $$\Upsilon_1\cdot (2K_1+E_1) =\Upsilon_0\cdot(2K_0+E_0)-2\eta_0-\eta_1=-2\eta_0-\eta_1$$ and $$(2K_1+D_1)\cdot(2K_1+E_1)=2\zeta-K_1\cdot E_1+2K_1\cdot(K_1+D_1)+E_1\cdot (K_1+D_1).$$ By \eqref{eq:Kn(Kn+Dn)} and \eqref{eq:En(Kn+Dn)} the latter expression equals $2\zeta-\gamma_1+2p_2(\PP^2,\bar E)-\tau^*-2+n_1$.

 (iv) We have $D_1-E_1=R_1+\cal C_++\cal C_{exc}+\cal L$ and by \eqref{eq:Kn(Kn+Dn)} $K_1\cdot (D_1-E_1)=2-\zeta-\tau^*-c$. Since $K_1\cdot \cal C_{exc}=-(c-\#\cal C_+)$ and $K_1\cdot \cal L=-\#\cal L$, (iv) follows.
\end{proof}

Note that all terms on the left hand sides of (iii) and (iv) are non-negative and right hand sides are strongly bounded from above. As above, put $\zeta=K_1\cdot (K_1+E_1)$.

\blem\label{lem:zeta>=0} $\zeta\in \{0,1\}$. \elem

\begin{proof} Suppose $\zeta=-1$. By \ref{cor:n=1_basics}(iii) we have $\gamma_1+\tau^*+(n_0-\eta_0)\leq 3+\eta$, so since $\gamma_1\geq 4$, it follows that $\gamma_1=4$, $\tau_*=0$, $\eta=1$ (hence $\#\cal L=1$) and $n_0=\eta_0$.
Now \ref{lem:mmp_basics}(vii) implies that $\cal C_+=0$, because otherwise for some $j$ we would have $K_1\cdot C_j'+\tau_j^*\leq \tau_j^*-2<0$. Then \ref{cor:n=1_basics}(iv) reads as $K_1\cdot R_1=4$. By \ref{cor:n=1_basics}(i) $\ind(D_1')\leq 1$. Since $\tau_*=0$, we have $s_j=1$ and $\tau_j=2$ for every $j$, so the maximal twigs of $D$ contracted by $\psi_0$ are $(-2)$-twigs, hence each of them contributes to $\ind(D_1')$ at least $\frac{1}{2}$. If $\cal L\cdot E_1=1$ then making a contraction $\varphi\:X_1\to Z$ of $\cal L$ we get $(\varphi_*E_1)^2=E_1^2+1=-3$, which is impossible by \ref{cor:fE>=4}(iv). Thus $\cal L\cdot E_1=0$ and hence $A\cdot E_0=0$. Note also that if $c=2$ then the contribution of twigs of $D_1'$ contracted by $\varphi_1$ equals $1$, so since $\ind(D_1')\leq 1$, $D_1'$ has no other twigs, which implies that both $\wt Q_j$ are chains. However, if both $\wt Q_j$ are chains then, because of $\tau^*=0$, they are both part of $\Upsilon_0+\Delta_0^+$, which is impossible for $n=1$. Therefore, $c=1$. We infer that $E$ and $E_1'$ are $(-6)$-curves.

We have $\ind(D_1')\leq 1$. Because $E_1'$ is a tip of $D_1'$, this implies that $D_1'$ can have at most one $(-2)$-tip, so $\Delta_1=0$. Since $\cal L$ is a component of $\Upsilon_1$, $\beta_{D_1}(\cal L)=2$ and $\cal L$ meets exactly one component of $D_1-\cal L$, hence $A$ meets only, say, $\wt Q_1$. Let $\ind'$ be the contribution to $\ind(D_1')$ of twigs of $D_1$ (equivalently, of twigs of $D_1'$ not contracted by $\psi_0$ and other than $E_1$). Clearly, $\ind'\leq 1-\frac{1}{2}-\frac{1}{6}=\frac{1}{3}$. Since $\Delta_1=0$, the tips of corresponding twigs have self-intersections at most $(-3)$, so in fact by \ref{lem:ind>=} there can be at most one twig in $D_1$ and it is a $(-3)$-curve. Suppose there is one. Let $U\subset D_0$ be the branching component of $D_0$ met by the proper transform of this twig. The divisor $D_0-E_0$ is a fork (has three maximal twigs and one branching component) and after the contraction of the maximal twig having $C_1'$ as a tip it becomes $[3,1,a_1,\ldots,a_k]$ for some $a_i\geq 2$ and $k\geq 1$. Since the latter chain contacts to a smooth point, we have $[a_1,\ldots,a_k]=[2,3,(2)_{k-2}]$. By \ref{lem:mmp_basics}(v)-(vi) $A$ meets the tip of the latter twig and its $(-3)$-curve. Then $k=3$ and $K_1\cdot R_1=1$; a contradiction.

Therefore $D_1$ has no twigs, hence $D_0$ has exactly one twig, so $\wt Q_1$ is a chain. Then $\wt Q_1=[1,(2)_k]$ for some $k\geq 0$, so $\Delta_0^-=0$ and hence $n=0$; a contradiction.
\end{proof}

\bprop\label{prop:A0E0=0} $A\cdot E_0=0$. \eprop

\begin{proof} Suppose $A\cdot E_0=1$. In this case, since $E_0$ is not contracted, $\psi$ is of type I, and it is easy to recover $D$ given the information on $D_1$. Indeed, because all centers of blowups constituting $\psi\circ \psi_0\:X\to X_1$ belong to the proper transforms of $E$, we have $(\psi\circ \psi_0)^*(K_1+E_1)=K+E$, so $K\cdot (K+E)=K_1\cdot (K_1+E_1)=\zeta$. Then $K\cdot (D-E)=K\cdot (K+D)-K\cdot (K+E)=p_2(\PP^2,\bar E)-\zeta$, so $K\cdot (D-E)=3-\zeta,$ and hence \beq\label{eq:KR} K\cdot R=3+c-\zeta,\eeq where $R$ is $D-E$ with the $(-1)$-curves (there is exactly one over each cusp) subtracted. The components of $R$ intersect non-negatively with $K$, so this is a very restrictive condition on $R$, because we have already bounded $c$ and $\zeta$. We have $\#D_1=\rho(X_1)+1=11-K_1^2=11-(\zeta-K_1\cdot E_1)=9+\gamma_1-\zeta$, hence \beq\label{eq:b2(D1)}\#D_1=9-\zeta+\gamma_1\geq 13-\zeta.\eeq

Since $\psi$ is of type I, either $T_A$ is a tip of $D_0$ and then $\psi$ is the contraction of $\Delta_A+A$ or $T_A$ is not a tip of $D_0$ and then $\psi$ is simply the contraction of $A$. The remaining part of the proof is quite long, but essentially it boils down to a repeated usage of \ref{cor:n=1_basics} and \ref{cor:fE>=4}.

\bcl If $\eta\neq 0$ then $\cal L$ meets $\Delta_1^+$ and $\psi$ contracts only $A$. \ecl

\begin{proof} If $T_A$, the component of $\Delta_A\subset \Delta_0$ met by $A$, is not a tip of $D_0$ then $\psi$ is the contraction of $A$ and hence $\psi_*(\Delta_A-T_A)$ is a part of $\Delta_1^+$, so we are done. Assume $T_A$ is a tip of $D_0$. Since $A\cdot E_0=1$, $\psi$ is the contraction of $A+\Delta_A$. By \ref{lem:mmp_basics}(ii) the unique component of $D_0-\Delta_A$ meeting $\Delta_A$ meets no other connected component of $\Delta_0$. Thus its image does not meet $\Delta_1$, hence $\eta=0$; a contradiction.
\end{proof}

 Let $U_j$ be the $(-1)$-curve of $Q_j$ and let $\ind_j$ be the contribution of the maximal twigs of $D$ contained in $Q_j$ to $\ind(D)$. By \ref{rmk:ind>half} $\ind_j>\frac{1}{2}$. Clearly, $\ind(D)\geq \sum \ind_j$ and the equality holds if $c\geq 2$ (if $c=1$ then $E$ is a maximal twig of $D$).  The contribution of the unique twig of $D$ meeting $A$ and other than $E$ in case $c=1$ is denoted by $\ind_A$ (this twig contains $T_A$ and $\Delta_A$). Since $A\cdot E=1$, we have \beq \sum_{j=1}^c \ind_j-\ind_A\leq \ind(D_1')\leq 1.\label{eq:ind_AE=1}\eeq

\bcl $c=1$. \ecl

\begin{proof} By \ref{thm:CN1}(c) $c\leq 2$.  Assume $c=2$. In case $\eta\neq 0$ the contributions of the connected component of $\Delta_1^+$ meeting $\cal L$ and the $Q_i$ not meeting $A$ are at least $\frac{1}{2}$ and more than $\frac{1}{2}$ respectively, hence $\ind(D_1')>1$, which is impossible. Thus $\eta=0$ and hence $T_A$ is a tip of $D$ and by \ref{cor:n=1_basics}(iii) $$\gamma_1+\tau^*\leq 5+2\zeta.$$ Let $(r_1,\ldots,r_{m_1})$ and $(w_1,\ldots,w_{m_2})$ be the types of $Q_1$ and $Q_2$ as defined in Subsection \ref{ssec:HN_and_types}. Put $r=\sum r_i$ and $w=\sum w_i$.

Suppose $\min(r,w)=1$, say $w=1$. Then $Q_2=[(2_k),3,1,2]$, so $A$ does not meet $Q_2$ and $\ind_2\geq \frac{5}{6}$. We obtain $\ind_1-\ind_A\leq \frac{1}{6}$. There is at least one twig contributing to $\ind_1-\ind_A$, because $A$ meets only one twig of $D$. By \ref{lem:ind>=} it contains a $\leq(-6)$ -tip, whose intersection with $K$ is at least $4$. On the other hand, $K\cdot (Q_1-U_1)=K\cdot R-1=4-\zeta\leq 4$ by \eqref{eq:KR}, hence $\zeta=0$ and $Q_1-U_1$ consists of a $(-6)$-tip and some number of $(-2)$-curves. This is possible only if $Q_1=[6,1,(2)_4]$, so $\tau^*\geq\tau_1^*=3$. But $\gamma_1\geq 4$, so we get a contradiction with the inequality above.

Suppose $\zeta=0$. We get $\tau^*\leq 5-\gamma_1\leq 1$. However, if $\tau^*=0$ then $s_1=s_2=1$, so the contribution of twigs of $D_1'$ contracted by $\varphi_1$ is at least $1$ and, since $A\cdot E_0=1$, there is at least one more twig contributing to $\ind(D_1')$, which again contradicts the inequality $\ind(D_1')\leq 1$. Thus $\tau^*=1$ and hence $\gamma_1=4$. By \ref{lem:type} and \eqref{eq:KR} $r+w=K\cdot R=5$. We may assume $A\cdot Q_2=0$. Then $A\cdot Q_1=1$, so in particular $q_1\in \bar E$ is not a semi-ordinary cusp. We claim $Q_1$ contains no other $(-2)$-twig than $\Delta_A$. Indeed, otherwise the contributions to $\ind(D_1')$ of the other $(-2)$-twig and the twigs in $Q_2$ are at least $\frac{1}{2}$ and more than $\frac{1}{2}$ respectively, which is a contradiction. Two corollaries follow. Firstly, $w=2$. Indeed, if $r=2$ then, since $q_1$ is not a semi-ordinary cusp, $Q_1$ is branched, hence of type $(1,1)$, so it contains at least two $(-2)$-tips; a contradiction. Secondly, $\tau_2^*=0$. Indeed, otherwise $\tau_1^*=0$ and hence $s_1=1$, which means that there is a $(-2)$-twig in $Q_1$ other than $\Delta_A$. Therefore, $w=2$ and $\tau_2^*=0$. By \ref{lem:Q_with_small_KQ} $Q_2$ is not a chain, so it is of type $(1,1)$. Then it contains at least two $(-2)$-tips, so $\ind(D_1')>\ind_2\geq 1$; a contradiction.

Thus $\zeta=1$ and $r,w\geq 2$. By \ref{lem:type} and \eqref{eq:KR} $r+w=K\cdot R=4$, so $r=w=2$. Suppose, say, $Q_1$ is branched. Then it is of type $(1,1)$ so the maximal twigs of $D$ contained in $Q_1$ are $T_1=[2]$, $T_2=[2]$ and $T_3=[(2)_k,3]$ for some $k\geq 0$. We have $\ind_1\geq \frac{1}{2}+\frac{1}{2}+(1-\frac{2}{2k+3})\geq \frac{4}{3}$. Since $\ind(D_1')\leq 1$, $A$ meets $Q_1$. Then $1>\ind(D_1')-\ind_2=\ind_1-\ind_A$, so $A$ meets $T_1+T_2$ and $k=0$. We obtain $\ind_1-\ind_A=\frac{5}{6}$, which gives $\ind_2\leq \frac{1}{6}$. In particular, $Q_2$ is not branched. By \ref{lem:Q_with_small_KQ}(iii) $\ind_2>\frac{1}{3}$; a contradiction.
\end{proof}

\bcl $\zeta=0$. \ecl

\begin{proof} Suppose $\zeta=1$. We have now $\ind_1-\ind_A\leq \ind(D_1')\leq 1$ and $K\cdot R=3$. If $Q_1$ is of type $(1,1,1)$ then it has four maximal twigs and three of them are $(-2)$-tips, at least two of which are not met by $A$, so $\ind_1-\ind_A>\frac{1}{2}+\frac{1}{2}=1$, which is impossible.

Suppose $Q_1$ is of type $(1,2)$. Then the maximal twigs of $D$ contained in the first branch of $Q_1$ are $T_1=[2]$ and $T_2=[(2)_k,3]$ for some $k\geq 0$ and by \ref{lem:Q_with_small_KQ}(iii) the one contained in the second branch of $Q_1$ is $T_3=[2,2]$ or $T_3=[3]$. In both cases we have $A\cdot T_3=0$, because if $T_3=[2,2]$ then $T_3$ is contracted by $\varphi_1$.

Consider the case $T_3=[2,2]$. Since $\ind_1-\ind_A\leq 1$, we see that $A$ meets $T_1$ and $k=0$. The second branch of $Q_1$ is $[2,2,1,4,(2)_u]$ for some $u\geq -1$. Here and later we use the convention $[a_1,\ldots,a_{n-1},a_n,(2)_{-1}]=[a_1,\ldots,a_{n-1}]$. Then $\#D_1=u+4$, so by \eqref{eq:b2(D1)} $u=4+\gamma_1\geq 8$. We have $D_1-E_1=[3,1,(2)_u,1]$ and $E_1$ meets this divisor once transversally in the middle $(-1)$-curve and once (with tangency index $3$) in the $(-1)$-tip ($C_1'$). Let $\alpha\:X_1\to Z$ be the contraction of the subchain consisting of the middle $(-1)$-curve and two $(-2)$-curves. Then $f=\alpha_*T_2$ is a $0$-curve with $f\cdot \alpha_*E_1=3$, which contradicts \ref{cor:fE>=4}(i).

Consider the case $T_3=[3]$. We have $A\cdot T_2=0$. Indeed, otherwise $T_A$ (the component of $\Delta_A\subset T_2$ meeting $A$) is not a tip, because $\pi(A)\subset \PP^2$ cannot be a $0$-curve, hence $\ind(D_1')>1$, which is false. Thus $A$ meets $T_1$. Now \eqref{eq:ind_AE=1} gives $k\leq 1$. The second branch of $Q_1$ is $[3,1,2,3,(2)_u]$ for some $u\geq -1$, so $\#D_1=k+u+5$. By \eqref{eq:b2(D1)} $u=3+\gamma_1-k\geq 6$. We have $D_1-E_1=[(2)_k,3,1,(2)_u,2,1]$ and $E_1$ meets this divisor once transversally in the middle $(-1)$-curve and once in the common point of the $(-1)$-tip with the $(-2)$-curve. As before, let $\alpha\:X_1\to Z$ be the contraction of the subchain consisting of the middle $(-1)$-curve and two $(-2)$-curves. The image of the $(-3)$-curve contained in $T_2$ is a $0$-curve whose intersection with $\alpha_*E_1$ is $3$; a contradiction with \ref{cor:fE>=4}(i).

Suppose $Q_1$ is of type $(2,1)$. The maximal twig of $D$ contained in the second branch of $Q_1$ is $T_3=[2]$. Since $T_3$ is contracted by $\varphi_1$, it does not meet $A$. By \ref{lem:Q_with_small_KQ} the maximal twigs contained in the first branch are either $T_1=[2,2]$ and $T_2=[(2)_k],4]$ or $T_1=[2]$ and $T_2=[(2)_k,3,2]$. As before, we see that since $\pi(A)\subset \PP^2$ is not a $0$-curve, $A$ does not meet the tip of $D$ contained in $T_2$. From \eqref{eq:ind_AE=1} we infer that $T_2\cdot A=0$, $k=0$ and $A$ meets the tip of $D$ contained in $T_1$. Then $D_1-E_1$ is $[4,1,(2)_u,1]$, $u\geq 0$ in the first case and $[3,2,1,(2)_u,1]$, $u\geq 0$ in the second case. In both cases $E_1\cdot (D_1-E_1)=3$. Taking the subchain $f=[1,(2)_u,1]$ we have $f\cdot E_1\leq 3$. But $f$ is a total transform of a $0$-curve, so it is a fiber of a $\PP^1$-fibration of $X_1$, which is again a contradiction with \ref{cor:fE>=4}(i).

Finally, suppose $Q_1$ is a chain. Since $K\cdot R=3$, $Q_1$ is as in \ref{lem:Q_with_small_KQ}(iv). Denote the maximal twig containing $[(2)_k]$ by $T_1$ and the second one by $T_2$. Suppose $A$ meets $T_2$. Then $A$ meets a $(-2)$-twig of $D_0$ contained in $\varphi(T_2)$, which is possible only if $Q_1$ is as in (iv.2). But then $C_1'$ (which is the image of the $(-3)$-curve on $X_1$) is a $0$-curve with $C_1'\cdot E_1=3$; a contradiction. Thus $A$ meets $T_1$. It does not meet the tip of $T_1$, because otherwise $\pi(A)\subset \PP^2$ would be a $0$-curve. Then $k\geq 2$, $\eta=1$ and by \ref{lem:Ai}(i) $A$ meets the tip of $[(2)_k]$ which is not the tip of $D$. Then $\Delta_1^+$ has $k-1\geq 1$ components. Since $\ind(D_1')\leq 1$, we check that $Q_1$ is of type (iv.3) or (iv.4) and that $k\leq 4$, which in these two cases gives $\#D_0\leq \#D=k+6\leq 10$. Because $\psi$ contracts only $A$, \eqref{eq:b2(D1)} gives $\#D_0=\#D_1=8+\gamma_1\geq 12$; a contradiction.
\end{proof}

By \eqref{eq:KR} and \ref{cor:n=1_basics}(iii) we infer that $K\cdot R=4$ and $\gamma_1+\tau_1^*\leq 5+\eta$.

\bcl $Q_1$ has at most one branching component. \ecl

\begin{proof} Let $(r_1,\ldots,r_m)$ be the type of $Q_1$. By \ref{lem:type} $\sum r_i=K\cdot R=4$, so $Q_1$ has at most four branches. If it has four then it is of type $(1,1,1,1)$, so has a least four $(-2)$-tips, hence $\ind_1-\ind_A\geq \frac{3}{2}$, which is impossible by \eqref{eq:ind_AE=1}. Thus $Q_1$ has at most three branches.

Suppose it has three. Then it is of type $(1,1,2)$ or $(1,2,1)$ or $(2,1,1)$. Note that every branch of $Q_1$ with $r_i=1$ contains a maximal twig of $D$ which is an irreducible $(-2)$-curve. Since $A$ meets exactly one maximal twig of $D$ contained in $Q_1$, $A$ meets one of these $(-2)$-curves, because otherwise $\ind_1-\ind_A>\frac{1}{2}+\frac{1}{2}$, which would contradict \eqref{eq:ind_AE=1}. Thus, $\ind_A=\frac{1}{2}$ and $\ind_1\leq \frac{3}{2}$. By \ref{lem:Q_with_small_KQ}(iii) for the type $(2,1,1)$ the first branch of $Q_1$ contains either the twigs $[(2)_k,4]$ and $[2,2]$ or $[(2)_k,3,2]$ and $[3]$ and the remaining maximal twigs contained in $Q_1$ are $[2]$ and $[2]$. Then we compute $\ind_1\geq \frac{1}{4}+\frac{2}{3}+1$ in the first case and $\ind_1\geq \frac{2}{5}+\frac{1}{3}+1$ in the second case; a contradiction. For the types $(1,1,2)$ and $(1,2,1)$ we check that $\ind_1\geq \frac{1}{3}+1+\frac{1}{3}$; a contradiction. Therefore, $Q_1$ has at most two branches, hence at most one branching component.
\end{proof}

\bcl $Q_1$ has a unique branching component. \ecl

\begin{proof} Assume $Q_1$ is a chain. Then it is as in \ref{lem:Q_with_small_KQ}(v). As before, we denote the maximal twig containing $[(2)_k]$ by $T_1$ and the other one by $T_2$.

Suppose $A$ meets $T_2$. Then $A$ meets the twig $\Delta_A$ contained in $\psi_0(T_2)$. This is possible only in cases (v.2), (v.3) and (v.4). In cases (v.2) and (v.3) we have $\tau_1=2$ and then either $C_1'$ is a $0$-curve with $C_1'\cdot E_1=3$ or (this is possible only for (v.2)) we have a chain $f=[1,1]$ containing $C_1'$ with $f\cdot E_1=3$. By \ref{cor:fE>=4} it follows that $Q_1$ is as in (v.4). In this case $\eta=0$ and $\tau_1^*=2$, so $\gamma_1\leq 5-\tau_1^*=3$; a contradiction.

Therefore, $A$ meets $T_1$. Since $\pi(A)\subset \PP^2$ is not a $0$-curve, $A$ does not meet the tip of $D$ contained in $T_1$, which implies that $k\geq 2$, $\eta=1$ and $\Delta_1^+=[(k-1)]$. Because $\ind(D_1')\leq 1$, the contribution of $T_2$ is at most $\frac{1}{k}\leq \frac{1}{2}$, so $Q_1$ is not as in (v.1), which gives $\tau_1^*\geq 1$, and hence $\gamma_1\leq 5$. Let $\alpha\:X_1\to Z$ be the contraction of $\cal L+\Delta_1^+$. Then $(\alpha_*E_1)^2\geq E_1^2+2\geq -3$; again a contradiction with \ref{cor:fE>=4}(i).
\end{proof}

We denote the maximal twigs of $D$ contained in the first branch of $Q_1$ by $T_1$ and $T_2$, with (in the notation of \ref{lem:Q_with_small_KQ}) $T_1$ being the one containing $[(2)_k]$. The maximal twig contained in the second branch is denoted by $T_3$.

\bcl  $A$ meets the $(-2)$-tip of $D$ contained in $T_2$. \ecl

\begin{proof} Suppose $\eta=1$. By Claim 1 $\Delta_1^+\neq 0$, so if $\alpha\:X_1\to Z$ is the contraction of $\cal L+\Delta_1^+$ then $-(\alpha_*E_1)^2\leq \gamma_1-2$, hence by \ref{cor:fE>=4}(iv) $\gamma_1\geq 6$. Then $\tau_1^*\leq 5+\eta-\gamma_1\leq 0$, so $\tau_1^*=0$. Then $D_1$ has three maximal twigs and two of them end with a $(-2)$-tip, so $\ind(D_1')>1$; a contradiction.  Thus $\eta=0$. It follows that $A$ meets some tip $T_A$ of $D$ other than $E$. Clearly, $T_A$ is not a component of $T_1$, because otherwise $\pi(A)$ would be a $0$-curve contained in $\PP^2$. Suppose it is a component of $T_3$. Then $\Delta_A$ is a $(-2)$-twig contained in $\varphi_1(T_3)$. By \ref{lem:Q_with_small_KQ} this is impossible if $r_2\leq 2$ or if $T_3=[2,3]$, so $Q_1$ is of type $(1,3)$ and $T_3=[3,2]$. Then $C_1'$ is a $0$-curve with $C_1'\cdot E_1=3$; a contradiction with \ref{cor:fE>=4}(i).
\end{proof}

By Claim 5 $Q_1$ is of type $(r_1,r_2)$ and by \ref{lem:type} $r_1+r_2=K\cdot R=4$. Since $\eta=0$, by Claim 3 we have $\gamma_1+\tau_1^*\leq 5$.

Consider the case when $Q_1$ is of type $(1,3)$. We have $T_1=[(2)_k,3]$ and $T_2=[2]$. Since $\tau_1^*\leq 5-\gamma_1\leq 1$, the second branch of $Q_1$ is $[(2)_u,4,2,1,3,2]$ or $[(2)_u,3,3,1,2,3]$. We have $1\geq \ind(D_1')=1-\frac{2}{2k+3}+\frac{2}{5}$, so $k\leq 1$. By \eqref{eq:b2(D1)} $k+u+6=\#D_1=9+\gamma_1\geq 13$, so $u\geq 6$. Note that the image of the branching $(-2)$-curve of $Q_1$ is a $(-1)$-curve in $D_1$ meeting $E_1$ transversally in one point. Let $\alpha\:X_1\to Z$ be the contraction of a subchain of $D_1-E_1$ consisting of this $(-1)$-curve and two $(-2)$-curve from the second branch and let $f$ be the image of the $(-3)$-curve from $T_1$. Then $f$ is a $0$-curve with $f\cdot \alpha_*E_1=3$; a contradiction with \ref{cor:fE>=4}(i).

Consider the case when $Q_1$ is of type $(3,1)$. Since $A$ meets a $(-2)$-tip of $T_2$, by \ref{lem:Q_with_small_KQ}(iv) $T_2=[2,2,2]$ or $T_2=[2,3]$. Also, \eqref{eq:ind_AE=1} gives $k=0$. In the first case $D_1-E_1=[5,1,(2)_u,1]$, so taking the subchain $f=[1,(2)_u,1]$ we have a fiber of a $\PP^1$-fibration of $X_1$ with $f\cdot E_1=1+\tau_1=3$, which contradicts \ref{cor:fE>=4}(ii). Thus  $T_2=[2,3]$. By \eqref{eq:b2(D1)} $u+6=\#D_1=9+\gamma_1$, so $u=\gamma_1+3$. The characteristic pairs of $\psi_0(Q_1)$ are $\binom{7}{5},\binom{1}{1}_{u+1}$, so by \ref{cor:I and II}(iii) $(\deg \bar E-1)(\deg \bar E-2)= 2(2(7\cdot 5+u+1)-(7+5+u))= 120+2u=126+2\gamma_1$. Thus $(\deg \bar E-1)(\deg \bar E-2)\in \{134,136\}$; a contradiction.

Finally, consider the case when $Q_1$ is of type $(2,2)$. By \ref{lem:Q_with_small_KQ}(iii) $\tau_1^*=1$ and, since $A$ meets a $(-2)$-tip of $T_2$, we have $T_2=[2,2]$ and $T_1=[(2)_k,4]$. Also, $T_3=[2,2]$ or $T_3=[3]$. It follows that $\gamma_1=4$ and $k+u+7-\tau_1=\#D_1=9+\gamma_1=13$, so $u=6+\tau_1-k$. Suppose $k\neq 0$. Because $\ind(D_1')\leq 1$, the latter is possible only if $T_3=[3]$ and $k=1$, so $D_1-E_1=[(2)_k,4,1,(2)_u,2,1]=[2,4,1,(2)_8,1]$. Let $B$ be the sixth component of this (ordered) chain. The contraction of $D_1-E_1-B$ maps $X_1$ onto $\PP^2$ and maps $B$ onto a smooth curve of self-intersection $2$; a contradiction. Thus $k=0$ and hence $u=6+\tau_1$.

If $T_2=[2,2]$ then $\tau_1=3$ and the characteristic pairs of $\psi_0(Q_1)$ are $\binom{4}{3},\binom{1}{1}_{u+1}$, so by \ref{cor:I and II}(iii) $(\deg \bar E-1)(\deg \bar E-2)= 3(3(12+u+1)-(7+u))= 96+6u=150$; a contradiction. If $T_2=[3]$ then $\tau_1=2$ and the characteristic pairs of $Q_1$ are $\binom{12}{9},\binom{3}{3}_{u+1},\binom{3}{2}$. Then $I(q_1)=12\cdot 9+9(u+1)+6=195$ and $M(q_1)=12+9+3(u+1)+2-1=50$. By \ref{cor:I and II}(iii) $(\deg \bar E-1)(\deg \bar E-2)=145$; a contradiction.
\end{proof}

\bprop\label{prop:zeta=1} $\zeta=1$. \eprop

\begin{proof} By \ref{lem:zeta>=0} $\zeta\in\{0,1\}$. Suppose $\zeta=0$. By \ref{prop:A0E0=0} $A\cdot E_0=0$, so \ref{lem:mmp_basics}(iii) says that for every component $V$ of $D_1-E_1$ we have $V\cdot E_1=(\psi^{-1})_*V\cdot E_0$. In particular, if $\cal L\neq 0$ (recall that $\#\cal L\leq 1$ by \ref{lem:mmp_basics}) then $\cal L\cdot (2K_1+E_1)=\cal L'\cdot E_0-2\leq -1$, where $\cal L'=(\psi^{-1})_*\cal L$. Moreover, if $\cal L$ is a component of $\Upsilon_1$ then \ref{lem:mmp_basics}(viii) says that $\cal L'\cdot E_0=0$. It follows that $\eta=\eta_0$. By \ref{cor:n=1_basics}(iii) \begin{equation*}\gamma_1+\tau^*\leq 4+2\eta.\end{equation*} Recall that $\gamma_1\geq 4$.

\setcounter{claim}{0}
\bcl $\cal C_+=0$. \ecl

\begin{proof} Suppose $(C_1')^2\geq 0$. By \ref{lem:mmp_basics}(vii) $\tau_1^*\geq 2+(C_1')^2\geq 2$, so the above inequality gives $\gamma_1=4$, $\eta=1$ and $\tau^*=\tau_1^*=2$. Then $C_1'$ is a $0$-curve. Clearly, $\cal L$ meets $C_1'$, so $f=C_1'+\cal L$ is nef. Then $0\leq f\cdot (2K_1+E_1)\leq -5+C_1'\cdot E_1=-5+\tau_1$, so $\tau_1^*\geq \tau_1-2\geq 3$; a contradiction.
\end{proof}

Now \ref{cor:n=1_basics}(iv) reads as \begin{equation}\label{eq:K1R1_case_zeta=1}\tau^*+K_1\cdot R_1=2+\#\cal L.\end{equation}

\bcl $\eta=0$. \ecl

\begin{proof} Assume $\eta=1$. Then $\gamma_1+\tau^*\leq 6$ and $\cal L\cdot E_1=\cal L'\cdot E_0=0$. Suppose first that $\cal L$ meets some $C_j'$, say $C_1'$. Then $f=C_1'+\cal L$ is nef, so $0\leq f\cdot (2K_1+E_1)=-4+\tau_1=\tau_1^*+s_1-3$, so $\tau_1^*\geq 3-s_1\geq 2$ and hence $\tau_1^*=2$ and $s_1=1$. We have $\Delta_1^+=0$, because otherwise the contributions of $\Delta_1^+$ and of the $(-2)$-twig of $D_1'$ contracted by $\varphi_1$ would add up to more than $1$, contradicting \ref{lem:mmp_basics}(iv). By the definition of $\Upsilon_1$ it follows that $\beta_{D_1}(\cal L)=2$, so $\cal L\cdot C_1'=2$. But this implies that $\psi$ touches $C_1$; a contradiction with Claim 1. Therefore, $\cal L\cdot C_j'=0$ for every $j\leq c$. If $\cal L\cdot \Delta_1^+=0$ put $\Delta_{\cal L}=0$, otherwise let $\Delta_{\cal L}$ be the connected component of $\Delta_1^+$ meeting $\cal L$.

Consider the case when $\cal L$ meets two components of $D_1-\Delta_1^+$, say $B_1$ and $B_2$. Then $\Delta_{\cal L}\neq 0$. Clearly, $B_1$ and $B_2$ are components of $R_1$. Denote by $\bar B_1$, $\bar B_2$, $\bar R_1$, $\bar K_1$ and $\bar E_1$ the push-forwards of $B_1$, $B_2$, $R_1$, $K_1$ and $E_1$ by the contraction of $\cal L+\Delta_\cal L$. A component of $R_1$ meets $E_1$ at most once, hence a component of $\bar R_1$ meets $\bar E_1$ at most once. If, say, $\bar B_1^2\geq 0$ then $\bar B_1$ is a nef divisor intersecting $2\bar K_1+\bar E_1$ negatively. Since $2\bar K_1+\bar E_1$ is effective, we infer that $\bar B_1^2\leq -1$ and $\bar B_2^2\leq -1$. Because $K_1\cdot (B_1+B_2)\leq K_1\cdot R_1\leq 3$, we have $-4-\bar B_1^2-\bar B_2^2=\bar K_1\cdot (\bar B_1+\bar B_2)\leq -1$, hence, say, $\bar B_1^2=-1$ and $\bar B_2^2\in \{-1,-2\}$. Then $\bar B_1+\bar B_2$ is a nef divisor, so its intersection with $2\bar K_1+\bar E_1$ is non-negative. It follows that $\bar E_1\cdot (\bar B_1+\bar B_2)\geq 2$, hence both $(\psi^{-1})_*B_i$ meet $E_0$. Then $c=2$ and $s_1=s_2=0$, so $\tau^*\geq 2$. We get $\gamma_1=4$. Now the contraction of $\bar B_1$ maps $\bar E_1$ onto a $(-3)$-curve, which contradicts \ref{cor:fE>=4}(iv).

Thus we may assume that $\cal L$ meets only one component $B_1$ of $D_1-\Delta_1^+$ and $B_1\subset R_1$. Let $B_0$ be the proper transform of $B_1$ on $X_0$. Clearly, $B_0$ meets $\cal L'$. For $f=B_1+2\cal L$ we have $f\cdot (2K_1+E_1)=2(K_1\cdot B_1-2)+B_1\cdot E_1<2(K_1\cdot B_1-1)$. It follows that $K_1\cdot B_1\geq 2$, otherwise $f$ is nef and its intersection with $2K_1+E_1$ is negative, which is impossible. Since $K_1\cdot B_1\leq 3-\tau^*\leq 3$, we get $K_1\cdot B_1\in\{2,3\}$, hence $B_1^2\in\{-4,-5\}$. We may assume $B_0\subset \wt Q_1$.

Suppose $A\cdot B_0=0$. Then $\beta_{\wt Q_1}(B_0)=\beta_{D_1}(B_1)\geq 3$. Let $\alpha\:X_0\to Z$ be the composition of successive contractions of $(-1)$-curves in $\wt Q_1$ and its images until the unique $(-1)$-curve meets the image of $B_0$. Since $\alpha_*B_0$ is a branching component of $\alpha_*\wt Q_1$ with $b=-(\alpha_*B_0)^2=-B_0^2\geq 5$, the $(-1)$-curve is a part of a twig $T=[1,(2)_{b-2}]$ of $\alpha_*\wt Q_1$. It follows that $D_1'$ contains the twig $[(2)_{b-3}]$. This in turn implies that $s_j=0$ and consequently that $\tau_j^*>0$ for every $j$. Indeed, otherwise, since $b-3\geq 2$, $\ind(D_1')\geq \frac{1}{2}+\frac{2}{3}>1$, which contradicts \ref{cor:n=1_basics}(i). Because $0\leq K_1\cdot (R_1-B_1)\leq 3-\tau^*-2= 1-\tau^*\leq 1$, we infer that $\tau^*=1$, $c=1$, $B_1^2=-4$ and that $R_1-B_1$ consists of $(-2)$-curves. Now the inequality $\ind(D_1')\leq 1$ implies that $D_1'$ has exactly one maximal twig other than $E_1'$. Since $s_1=0$, $C_1'$ is not a tip of $D_1$, so $D_1-E_1-\cal L=[4,1,(2)_k]$ for some $k\geq b-2\geq 3$. We have $k+3=\#(D_1-E_1)=\rho(X_1)=10-K_1^2=10+K_1\cdot E_1=8+\gamma_1$, so $k=\gamma_1+5$. But $D_1'$ contains a twig $[(2)_{k-1}]$, so $(1-1/k)+\frac{1}{-(E_1')^2}\leq\ind(D_1')\leq 1$ and hence $k\leq -(E_1')^2=\gamma_1+2$; a contradiction.

Thus $A\cdot B_0=1$.  Then $\psi$ is of type $I$, hence $\cal L'$ meets $B_0$ and $\cal L'$ is a $(-2)$-curve touched by $\psi$. By the definition of $\Upsilon_1$ and by \ref{lem:mmp_basics}(ii) $\cal L'$ is in fact a part of some maximal $(-2)$-twig $T=[(2)_t]$, $t\geq 1$ of $D_0$ meeting $B_0$. We have $b=-B_0^2\geq -B_1^2+1\geq 5$. By \eqref{eq:K1R1_case_zeta=1} $$\tau^*+K_1\cdot (R_1-B_1)=5+B_1^2\leq 1,$$ so $\tau^*\leq 1$. Suppose $R_1-B_1$ consists of $(-2)$-curves. Then $\wt Q_1-C_1-B_0$ consists of $(-2)$-curves. It follows that with $\alpha\:X_0\to Z$ as above $\alpha_*\wt Q_1$ is a chain, hence it is of type $[(2)_t,b,1,(2)_{b-2}]$. Then $D_1'$ contains a twig $[(2)_{b-3}]$, whose contribution to $\ind(D_1')$ is at least $\frac{2}{3}$. Since $\ind(D_1')\leq 1$, we get $c=1$ and $s_1=0$, so $\tau^*=1$, $\tau_1=2$ and $B_1^2=-4$. In this case $C_1$ is not a tip of $\wt Q_1$, so, since $R_1-B_1$ consists of $(-2)$-curves, $\wt Q_1-B_0-T$ is a chain $[1,2,\ldots,2]$. Then the contraction of $D_1-E_1-B_1$ maps $X_1$ onto $\PP^2$ and $E_1$ onto a unicuspidal curve with a cusp of multiplicity at most three, which is in contradiction with \ref{lem:min(mu)>=4}. Thus $R_1-B_1$ does not consist only of $(-2)$-curves. By the equation above $B_1^2=-4$, $\tau^*=0$ and $R_1-B_1$ consists of one $(-3)$-curve and some number of $(-2)$-curves. Since $\ind(D_1)\leq 1$, it follows that $c=1$ (hence $E_1'$ is a tip of $D_1'$) and that $D_1'$ contains no $(-2)$-tips other than the one contracted by $\varphi_1$. Let $T'=[1,2,2,\ldots,2]$ be the maximal twig of $\wt Q_1$ containing $C_1$. The divisor $\wt Q_1$ has two other maximal twigs: $[(2)_{t},b,(2)_{t_1}]$ and $[3,(2)_{t_2}]$ for some $t_1\geq -1$, $t_2\geq 0$, so $1\geq \ind(D_1')\geq \frac{1}{2}+\frac{1}{\gamma_1+2}+(1-\frac{2}{2t_2+3})$. By \ref{cor:n=1_basics}(iii) $\gamma_1\leq 6$, so $t_2=0$. Because $\wt Q_1$ contracts to a smooth point, we get $t_1=1$ and then $b=3$; a contradiction.
\end{proof}

Since $\eta=0$, the inequality $\gamma_1+\tau^*\leq 4+2\eta$ gives $\gamma_1=4$ and $\tau^*=0$, hence $K_1\cdot(R_1+\cal L)=K_1\cdot R_1-\#\cal L=2$. Because $\tau^*=0$, the contribution to $\ind(D_1')$ of the $(-2)$-twigs of $D$ contracted by $\varphi_1$  is at least $\frac{1}{2}c$. If $c=2$ then, since $\ind(D_1')\leq 1$, $D_1'$ has no other maximal twigs, which implies that both $\wt Q_j=\psi(Q_j)$ are chains ending with a $(-1)$-curve tangent to $E_0$. But in the latter case both cusps are semi-ordinary, which is not possible for $n>0$. Therefore, $c=1$. Let $p\:X_1\to \PP^1$ be the elliptic fibration given by the linear system $|2C_1'+E_1|$ (see \ref{lem:elliptic_ruling}) and let $H$ be the unique component of $D_1-E_1$ meeting $C_1'$. It is the unique horizontal component of $D_1$. The proper transform of $H$ on $X_0$, being the unique component of $D_0-E_0$ meeting $C_1$, is a $(-2)$-curve, hence $H^2\geq -2$. If $H^2\geq -1$ or if $H^2=-2$ and $H$ meets some vertical $(-1)$-curve other than $C_1$ then we easily find a nef divisor ($H$ or $C_1'+H$ or $C_1'+H+\cal L$) intersecting $2K_1+E_1$ negatively, which contradicts $2K_1+E_1$ being effective. Therefore, $H^2=-2$ and $H$ meets no vertical $(-1)$-curves other than $C_1'$.

\bcl There is a $(-1)$-curve $L\not\subset D_1$ with $L\cdot D_1=2$. \ecl

\begin{proof}
We have $K_1\cdot R_1=2+\#\cal L>0$, so there is a vertical component of $R_1$ which is a $(k)$-curve for some $k\leq -3$. We have $K_1^2=\zeta-K_1\cdot E_1=-2$, so by \ref{lem:elliptic_fibers} there exists a vertical $(-1)$-curve $L$ other than $C_1'$ and if $\alpha\: X_1\to Z$ is the contraction of $L+C_1'$ then the fibers of the induced elliptic fibration of $Z$ are all minimal. Let $F$ be the fiber of $p$ containing $L$.

Suppose $\beta_F(L)>2$. Then $\alpha_*F$ is not snc, so by \ref{lem:elliptic_fibers} $F-L$ is a disjoint sum of three rational curves $U_1, U_2, U_3$ with $\sum \frac{1}{d(U_i)}=1$. Since $H$ is a $2$-section of $p$, some $U_i$, say $U_3$, does not meet $H$. It follows that $L$ is a component of $D_1$, otherwise either $D_1$ would not be connected or $X_1\setminus D_1$ would contain a complete curve, which is impossible, because $X_1\setminus D_1$ is affine. Since $D_1$ contains no superfluous $(-1)$-curves, all $U_i$ are components of $D_1$. Therefore, $L=\cal L$ and we have $K_1\cdot (U_1+U_2+U_3)\leq K_1\cdot R_1=2+\#\cal L=3$. The condition $\sum \frac{1}{d(U_i)}=1$ gives $U_i^2=-3$ for $i=1,2,3$. Note that since $\ind(D_1')\leq 1$, the contribution to $\ind(D_1')$ of twigs of $D_1$ other than the twig $T=[2]$ contracted by $\varphi_1$ is at most $\frac{1}{2}$, which implies that $H$ meets $U_1$ and $U_2$ (each once) and that $D_1'$ has no maximal twigs other than $U_3$ and $T$. But $p_a(H+U_1+L+U_2)=1$, so because $p_a(D_1')=1$, we get that in fact $D_1=E_1+C_1+H+F_0$. However, $\#(D_1-E_1)=\rho(X_1)=10-K_1^2=12-\zeta=12$; a contradiction.

Thus $\beta_F(L)\leq 2$. Suppose $L$ is a component of $D_1$. Since $D_1$ contains no superfluous $(-1)$-curves, we see that $F=L+U$, where $U$ is a $(-4)$-curve and $U\cdot L=2$. Then $K_1\cdot (R_1-U)=\#\cal L=1$, so $\alpha_*(R_1-U)$ contains a vertical $(-3)$-curve and hence some singular reducible fiber of the induced elliptic fibration of $Z=\alpha(X)$ does not consist of $(-2)$-curves, which is in contradiction to \ref{lem:elliptic_fibers}(i). Thus $L\not \subset D_1$. Since $L\cdot H=0$, $U$ is a component of $D_1$. By \ref{prop:fundamentals}(iii) $L\cdot D_1=2$.
\end{proof}

Since by \ref{prop:fundamentals}(iii) $\PP^1\setminus \bar E$ contains no affine lines, $X_1\setminus D_1$ contains no affine lines, so in fact $L$ meets $D_1$ at two different points, transversally. The proper transform of $L$ on $X_1'$, which we denote by the same letter, is a $(-1)$-curve with the same properties. We compute $(K_{X_1'}+D_1'+L)^2=(K_{X_1'}+D_1')^2+1=K_{X_1'}\cdot (K_{X_1'}+D_1')+1=K\cdot (K+D)-n+1=p_2(\PP^2,\bar E)=3$ and $\chi(X_1'\setminus (D_1'+L))=\chi(X_1'\setminus D_1')=1$. By \ref{lem:BMY} $\ind(D_1'+L)+(K_{X_1'}+D_1'+L)^2\leq 3\chi(X_1'\setminus (D_1'+L))$, hence $\ind(D_1'+L)=0$. But $D_1'$ has a $(-2)$-tip $T$ (contracted by $\psi_0$) which is not met by $L$, so $\ind(D_1'+L)\geq \frac{1}{2}$; a contradiction.

\end{proof}

Now we rule out the case $n=\zeta=1$.

\begin{proof}[Proof of Theorem \ref{prop:n=0}] By \ref{prop:zeta=1} $\zeta=1$. By \ref{prop:A0E0=0} $A\cdot E_0=0$. Then \ref{lem:mmp_basics}(viii) says that if $\cal L$ is a component of $\Upsilon_1$ then $\cal L\cdot E_1=0$, hence $\eta_0=\eta$. Now \ref{cor:n=1_basics} gives

\beq \gamma_1+\tau^*\leq 6+2\eta,\label{eq:n=zeta=1_gamma1}\eeq and
\beq\sum_{j:(C_j')^2\geq 0}(K_1\cdot C_j'+\tau_j^*)+\#\cal C_++\sum_{j:(C_j')^2=-1}\tau_j^*+K_1\cdot R_1=1+\#\cal L.\label{eq:n=zeta=1_K1R1}\eeq
Recall that $\cal L'=(\psi^{-1})_*\cal L$.

\setcounter{claim}{0}
\bcl $\cal C_+=0$. \ecl

\begin{proof} Suppose $(C_1')^2\geq 0$. Assume $\#\cal L=1$. The divisor $\cal L+C_1'$ is nef. By \eqref{eq:n=zeta=1_K1R1} and \ref{lem:mmp_basics}(vii) $0\leq K_1\cdot C_1'+\tau_1^*\leq \#\cal L=1$ and $\tau_1^*\geq 2+(C_1')^2\geq 2$, so $(2K_1+ E_1)\cdot C_1'\leq K_1\cdot C_1'+1-\tau_1^*+\tau_1=K_1\cdot C_1'+2+s_1\leq s_1$. It follows that $0\leq (C_1'+\cal L)\cdot (2K_1+E_1)\leq \cal L\cdot E_1-2+s_1$, so $\cal L\cdot E_1+s_1\geq 2$. Since $A\cdot E_0=0$, we have $\cal L\cdot E_1=\cal L'\cdot E_0\leq 1$. Then the inequalities become equalities, so $s_1=K_1\cdot C_1'+\tau_1^*=1$, $K_1\cdot C_1'=-2$ and $\cal L'\cdot E_0=1$. We infer that $\tau_1^*=3$ and that $\cal L$ meets some $C_j'$ for some $j>1$. But the latter implies that $\eta=0$, so \eqref{eq:n=zeta=1_gamma1} gives $\tau_1^*\leq 6-\gamma_1\leq 2$; a contradiction.

We obtain $\#\cal L=0$. Then $\eta=0$ and $\#\cal C_++K_1\cdot R_1\leq 1$. It follows that $\#\cal C _+=1$, $K_1\cdot C_1'+\tau_1^*=0$ and $R_1$ consists of $(-2)$-curves. Again, $\tau_1^*\geq 2$, which by \eqref{eq:n=zeta=1_gamma1} implies that $\gamma_1=4$ and $\tau_1^*=2$. Then $K_1\cdot C_1'=-2$, so $C_1'$ is a $0$-curve. This means that $\psi$ touches $C_1$ exactly once. Since $\eta=0$, this is possible only if $A\cdot C_1=0$ and $\psi$ contracts some twig $V$ of $D_0$ meeting $C_1$. Because $A\cdot E_0=0$, we have $C_1'\cdot E_1=C_1\cdot E_0=\tau_1$, so $0\leq C_1'\cdot (2K_1+E_1)=-4+\tau_1= s_1-1$, so $s_1=1$. But then $Q_1=V+C_1$, so since $\psi$ contracts $V$, $A$ meets $V$ once, and hence it meets $Q_1+E_1$ once. Then $A$ meets some component of $D_1-E_1-Q_1$, so $c\geq 2$. But by \eqref{eq:n=zeta=1_gamma1} $\tau_2^*=0$, so $\ind(D_1')\geq \frac{3}{4}+\frac{1}{2}>1$; a contradiction.
\end{proof}

Since $\cal C_+=0$, \eqref{eq:n=zeta=1_K1R1} reads as \beq \tau^*+K_1\cdot R_1=1+\#\cal L.\label{eq:n=zeta=1_K1R1_v2}\eeq

\bcl $\cal \tau^*\leq 1$. \ecl

\begin{proof} By the above inequality $\tau^*\leq 2$. Suppose $\tau^*=2$. Then $K_1\cdot R_1=0$, so $R_1$ consists of $(-2)$-curves. Also, $\#\cal L=1$. By \ref{lem:mmp_basics}(iii) for every component $V$ of $D_1-E_1$ we have $$V\cdot E_1=(\psi^{-1})_*V\cdot E_0$$ and the latter number is at most $1$ for $V\neq C_j'$.

Suppose there is a component $M$ of $R_1$, such that $M\cdot \cal L\geq 2$. Then $2\cal L+M$ is nef, so $0\leq (2\cal L+M)\cdot (2K_1+E_1)=M\cdot E_1+2\cal L\cdot E_1-4$, so $M\cdot E_1+2\cal L\cdot E_1\geq 4$. But $M\cdot E_1\leq 1$ and $\cal L\cdot E_1\leq 1$; a contradiction. Suppose there exist two components $M$, $M'$ of $R_1$ meeting $\cal L$. Then $M+2\cal L+M'$ is nef, so $0\leq (M+2\cal L+M')\cdot (2K_1+E_1)=-4+(M\cdot E_1+2\cal L\cdot E_1+M'\cdot E_1)$, so all of $M, M', \cal L$ meet $E_1$. Then the proper transforms of $M, M', \cal L$ on $X_0$ meet $E_0$. Since these transforms are contained in $D_0-E_0-\cal C$, they are contained in different connected components of $D_0-E_0$, so $c\geq 3$; a contradiction. We obtain $R_1\cdot \cal L\leq 1$.

Since by Claim 1 $\psi$ does not touch any $C_j'$, we have $\cal L\cdot C_j'=\cal L'\cdot C_j\leq 1$, so $\cal L$ meets each of $R_1$, $C_1',\ldots, C_c'$ at most once. This forces $\cal L\cdot E_1=1$. Indeed, otherwise $\cal L\cdot E_1=0$, so $\cal L$ meets at least two $C_j'$'s, and hence $\cal L'$ meets at least two $C_j$'s, which is impossible. Say $\cal L\cdot C_1'=1$. Then $\cal L'\cdot C_1=1$, so $s_1=0$. The divisor $C_1'+\cal L$ is nef, so $0\leq (C_1'+\cal L)\cdot (2K_1+E_1)=\tau_1-3$, so $\tau_1^*=\tau_1-1\geq 2$. Since $\cal L\cdot E_1=1$, \ref{lem:mmp_basics}(viii) says that $\eta=0$, so $\gamma_1\leq 6-\tau^*\leq 4$. Then $\gamma_1=4$ and the contraction of $\cal L$ maps $E_1$ onto a $(-3)$-curve; a contradiction with \ref{cor:fE>=4}(iv).
\end{proof}

\bcl $\cal L=0$. \ecl

\begin{proof} Suppose $\#\cal L=1$.

Suppose $\cal L$ meets $C_j'$, say, for $j=1$. Then again the divisor $C_1'+\cal L$ is nef, so $0\leq (C_1'+\cal L)\cdot (2K_1+E_1)=-4+\tau_1+\cal L\cdot E_1$. We have $\cal L\cdot E_1=\cal L'\cdot E_0\leq 1$, so $\tau_1\geq 3$. By Claim 2 $\tau^*\leq 1$, so $\tau_1=3$ and $s_1=1$. But $s_1=1$ implies that $\cal L\cdot E_1=0$, so the initial inequality fails; a contradiction. It follows that $\cal L\cdot C_j'=0$ for every $j$ and hence that $\cal L\cdot E_1=0$. Let $M$ be a component of $R_1$ meeting $\cal L$. Put $M'=(\psi^{-1})_*M$.

Suppose $M\cdot \cal L>1$. Say $M'\subset \wt Q_1$. By \eqref{eq:n=zeta=1_K1R1_v2} $K_1\cdot M\leq 2$, so $M^2\geq -4$. Then the divisor $M+2\cal L$ is nef, so $0\leq (2\cal L+M)\cdot (2K_1+E_1)=-4+2K_1\cdot M+M\cdot E_1$. We have $M\cdot E_1=M'\cdot E_0\leq 1$, so $K_1\cdot M\geq 2$. But $K_1\cdot R_1\leq 2$, so we get $K_1\cdot M=2$, $K_1\cdot (R_1-M)=0$ and by \eqref{eq:n=zeta=1_K1R1_v2} $\tau^*=0$. Since $\ind(D_1')\leq 1$, $c=1$. Now the curve $C_1$ is a tip of $\wt Q_1$, $M^2=-4$ (hence $(M')^2\leq -5$) and $R_1-M$ consists of $(-2)$-curves. Also, $\psi(A)$ is one of the points of intersection of $M$ and $\cal L$. If in $D_1-E_1$ there is a chain $f=[1,2,\ldots,2,1]$ containing both $\cal L$ and $C_1'$ then $f$ is nef and $f\cdot (2K_1+E_1)\leq -4+\tau_1+\cal L\cdot E_1+R_1\cdot E_1=(\psi^{-1})_*R_1\cdot E_1-2<0$, which is impossible. It follows that the connected component of $\wt Q_1-M'$ containing $C_1$ is a chain $T=[1,2,\ldots,2]$ not touched by $\psi$. Since $\wt Q_1$ contracts to a point, $T\neq C_1$ and $M'$ does not meet the $(-2)$-tip of $T$, which is therefore a tip of $D_1'$. Then $\ind(D_1')\geq \frac{1}{2}+\frac{1}{2}+\frac{1}{-(E_1')^2}>1$; a contradiction.

Therefore, every component of $R_1$ meets $\cal L$ at most once. Because $D_1$ contains no superfluous $(-1)$-curves, we see that $\cal L$ meets at least three components of $R_1$. If at least two of them, say $M_1$, $M_2$, have self-intersection bigger than $(-3)$ then $M_1+2\cal L+M_2$ is nef and intersects $2K_1+E_1$ negatively, which contradicts the effectiveness of $2K_1+E_1$. Since $K_1\cdot R_1\leq 2$, we get that $\cal L$ meets exactly three components of $R_1$, say $M_1$, $M_2$, and $M_3$, and we have $M_1^2=-2$, $M_2^2=M_3^2=-3$. Then the divisor $M_1+3\cal L+M_2+M_3$ is nef, so its intersection with $2K_1+E_1$ is non-negative. We obtain $(M_1+M_2+M_3)\cdot E_1\geq 2$, so $R_1\cdot E_1\geq 2$. It follows that $(\psi^{-1})_*R_1\cdot E_0\geq 2$, so $s_1=s_2=0$. Then $\tau^*\geq 2$, which contradicts Claim 2.
\end{proof}

Now \eqref{eq:n=zeta=1_gamma1} and \eqref{eq:n=zeta=1_K1R1_v2} give
$\gamma_1+\tau^*\leq 6$ and $\tau^*+K_1\cdot R_1=1.$ We may, and shall, assume $\tau_1^*\geq \tau_2^*$. Then $\tau_2^*=0$ and $\tau^*=\tau_1^*\leq 1$.

\bcl $\tau_1=2$. \ecl

\begin{proof} Suppose $\tau_1>2$. Then $\tau_1^*\geq 2-s_1\geq 1$, so by \eqref{eq:n=zeta=1_K1R1_v2} $\tau_1=3$, $s_1=1$ and $R_1$ consists of $(-2)$-curves. The contribution to $\ind(D_1')$ from the maximal twigs of $D_1'$ contracted by $\varphi_1$ is $\frac{2}{3}+(c-1)\frac{1}{2}$, so since $\ind(D_1')\leq 1$, we have $c=1$. Therefore, $D_1'$ consists of one $(-1)$-curve, one $(-4)$-curve and some number of $(-2)$-curves. We infer that $D_1'$ has no tips other than $E_1'$ and the one contracted by $\varphi_1$. Indeed, otherwise the contribution of the additional twig to $\ind(D_1')$ would have to be at most $1-\frac{2}{3}-\frac{1}{\gamma_1+3}\leq \frac{1}{3}-\frac{1}{8}<\frac{1}{4}$, which is impossible. Since the arithmetic genus of $D_1'$ is one, the arithmetic genus of $D_1'-E_1'$, and hence of $D_1-E_1$, is also one. Since $C_1'$ is the unique tip of $D_1-E_1$, we infer that $D_1-E_1$ has a unique branching component. Thus there exists a component $B$ of $D_1-E_1$ such that $D_1-E_1-B$ is a chain. The contraction of $D_1-E_1-B$ maps $X_1$ onto $\PP^2$ and $E_1$ onto a unicuspidal curve with a cusp of multiplicity three. This contradicts \ref{lem:min(mu)>=4}.
\end{proof}

Now $\tau^*=\tau^*_1=1-s_1$, so \eqref{eq:n=zeta=1_gamma1} and \eqref{eq:n=zeta=1_K1R1_v2} give
\beq \gamma_1\leq 5+s_1\label{eq:n=zeta=1_gamma1_v3}\eeq and
\beq K_1\cdot R_1=s_1.\label{eq:n=zeta=1_K1R1_v3}\eeq
If $s_1=0$ we denote the component of $\wt Q_1-C_1$ meeting $E_0$ by $L'$ and we put $L=\psi(L')$. By the Noether formula $\rho(X_1)=10-K_1^2=10+K_1\cdot E_1-\zeta=\gamma_1+7\geq 11$. We get $\#R_1=\#(D_1-E_1)-c-1\geq \rho(X_1)-3\geq 8$.

\bcl $c=1$. \ecl

\begin{proof} Suppose $c=2$. Consider the case $s_1=1$. The contribution to $\ind(D_1')$ from the twigs contracted by $\varphi_1$ is $c\cdot \frac{1}{2}=1$, so since $\ind(D_1')\leq 1$, $D_1'$ has two maximal twigs. Then $D$ has at most four maximal twigs, so $\wt Q_1$ and $\wt Q_2$ are chains with $(-1)$-curves as tips. Then both cusps of $\bar E$ are semi-ordinary, so $\Delta_0^-=0$, which is impossible for $n=1$. Therefore, $s_1=0$, so by \eqref{eq:n=zeta=1_K1R1_v3} $R_1$ consists of $(-2)$-curves. Because $\tau_1=\tau_2=2$, we infer that $D_1'-E_1'$ consists of two $(-1)$-curves (both meeting $E_1'$), three $(-3)$-curves and some number of $(-2)$-curves.

Suppose $D_1'$ has more than two maximal twigs. Since $\ind(D_1')\leq 1$, it has exactly three and all of them are $(-3)$-curves. But the twig contained in $\psi'(Q_2)$ which is contracted by $\varphi_1$ is a $(-2)$-curve; a contradiction. It follows that $D_1'$ has at most two maximal twigs, hence $D$ has at most four maximal twigs. Then $\wt Q_1$ and $\wt Q_2$ are chains. Since $\wt Q_2$ ends with a $(-1)$-curve, $\wt Q_2-C_2'$ is a $(-2)$-chain, hence $q_2\in \bar E$ is semi-ordinary and $\wt Q_2-C_2'$ is a part of $\Delta_0^+$. Then $A$ does not meet $\wt Q_2$. Now the contribution of the twigs of $D_1'$ contained in $Q_2$ to $\ind(D_1')$ is at least $\frac{5}{6}$, so $\psi'(Q_1)$ has no tips and hence $\psi(\wt Q_1)$ is a cycle. The contraction of $D_1-L$ maps $X_1$ onto $\PP^2$ and $\bar E$ onto a bicuspidal curve with cusps of multiplicity two and three. This is impossible by \ref{lem:min(mu)>=4}.
\end{proof}

\bcl $s_1=0$. \ecl

\begin{proof}Suppose $s_1=1$. Then $R_1\cdot E_1=0$ and $R_1$ consists of one $(-3)$-curve $V$ and some number of $(-2)$-curves. Also, $\gamma_1\leq 6$. Since $C_1$ is the only component of $\wt Q_1$ meeting $E_0$, we have $\gamma_0+d^2=\tau_1^2 I(q_1)$, where $d=\deg \bar E$. Then $\gamma_0+d^2\equiv 0\mod 4$, so $\gamma_0$ is congruent to $0$ or $3$ modulo $4$. Because $\gamma_0=\gamma_1\in\{4,5,6\}$, we get $\gamma_1=4$. Let $\alpha\: X_1\to Z$ be the contraction of $C_1$ and let $p\:Z\to \PP^1$ be the elliptic fibration induced by the linear system $|\alpha_*E_1|$. We have $K_1^2+K_1\cdot E_1=\zeta=1$, so $K_1^2=-1$. Then $K_Z^2=0$. By \ref{lem:elliptic_fibers} all fibers of $p$ are minimal. Then singular fibers of $p$ are either irreducible or consist of $(-2)$-curves. It follows that $V$ meets $C_1'$. But then $(\psi^{-1})_*V$, which is the unique component of $\wt Q_1-C_1$ meeting $C_1$, has self-intersection smaller than $(-2)$, hence $\wt Q_1$ cannot be contracted to a (smooth) point; a contradiction.
\end{proof}

\bcl $D_1'$ has at most two maximal twigs.  \ecl

\begin{proof} From \eqref{eq:n=zeta=1_K1R1_v3} and Claim 6 it follows that $R_1$ consists of $(-2)$-curves. Because $(\tau_1,s_1)=(2,0)$, we obtain that $D_1'-E_1'$ consists of a $(-1)$-curve meeting $E_1'$, of two $(-3)$-curves and some number of $(-2)$-curves. Recall that $L'$ is the component of $\wt Q_1-C_1$ meeting $E_0$ and $L=\psi(L')$. The total reduced transform of $L$ on $X_1'$ is $[1,2,3]$. Let $W_2$ be the subchain $[2,3]$ of the latter chain and let $W_1$ be the proper transform of $C_1'$ on $X_1'$. Clearly, $W_1^2=-3$ and $D_1'-E_1'-C_1'-W_1-W_2$ consists of $(-2)$-curves. Moreover, at most one of $W_1$, $W_2$ is a maximal twig of $D_1'$. We have $\gamma=-E^2=-(E_1')^2 =\gamma_1+2\in\{6,7\}$.  Since $\ind(D_1')\leq 1$, $D_1'$ has at most one $(-2)$-tip. If $D_1'$ has more than three maximal twigs then their contribution to $\ind(D_1')$ is at least $\frac{1}{\gamma_1+2}+\frac{1}{3}+\frac{1}{3}+\frac{1}{2}>1$, which is impossible. Therefore, $D_1'$ has at most three maximal twigs.

Suppose $D_1'$ has three maximal twigs. Then one of them is $W_1=[3]$, otherwise $1\geq \ind(D_1')\geq \frac{1}{\gamma_1+2}+\frac{2}{5}+\frac{1}{2}$, which is impossible, because $\gamma_1+2<10$. It follows that the maximal twigs of $D_1'$ are: $E_1'=[\gamma_1+2]$, $W_1=[3]$ and $V_0=[2]$. In particular, $C_1'$ is a tip of $D_1-E_1$, so $C_1$ is a tip of $\wt Q_1$. Then the last Hamburger-Noether pair of the log resolution of $(\PP^2,\bar E)$ is $\binom{3}{2}$. Now \ref{cor:I and II}(i) used for $Y=X$ (hence $\rho_i=0$ in the notation of the lemma) gives $3|\gamma$. Because $\gamma=\gamma_1+2\in\{6,7\}$, we get $\gamma_1=4$ and hence $K_1^2=\zeta-K_1\cdot E_1=-1$. We get $\#(D_1-E_1)=\rho(X_1)=11$.

At this point we know the self-intersections of all components of $D_1$ and their number. Still, to arrive at a contradiction we need to determine the shape of the dual graph of $D_1$ and then recover $D_0$.

The components of $D_1$ generate $\Pic (X_1)$, so since $3K_1+E_1-C_1'$ intersects trivially with all these components, we have $$3K_1+E_1\equiv C_1',$$ where $\equiv$ denotes the numerical equivalence. Let $p\:X_1\to \PP^1$ be the elliptic fibration induced by the linear system $|2C_1'+E_1|$. Since $K_1^2=-1$, all fibers except $f=2C_1'+E_1$ are minimal. Because $L\cdot (2C_1'+E_1)=3$, $L$ is a $3$-section of $p$. The divisor $V=D_1-E_1-C_1'-L$ is vertical. We have $\rho(X_1)=11$ and $\#V=9$, so if $V$ does not contain some fiber then the $11$ components of $V+C_1+E_1$ are independent in $\NS_\Q(X_1)$ and $V+C_1+E_1$, being vertical, has a semi-negative intersection matrix. The latter is impossible by the Hodge index theorem, so we infer that the support of $V$ contains some reduced fiber $F_V$. We have $V_0\subset V$ (we identify $\varphi_1(V_0)$ and $V_0$ because $\varphi_1$ does not touch $V_0$). Also, $p_a(V+L)=p_a(D_1'-E_1')=1$ and $L$, $V_0$ are the only tips of $V+L$.

Suppose $V_0\cdot L=1$. Then $V=V_0+F_V$ and $L\cdot F_V=L\cdot V_0=1$. We have $p_a(F_V)=p_a(V+L)=1$, so by the Kodaira classification $F_V$ is a $(-2)$-cycle. The point $\psi(A)$ belongs to $F_V$, and more precisely to the branching component $B$ of $D_1-E_1$ contained in $V$. Since $D_0-E_0$ contracts to a point, after the contraction of the proper transform of $C_1+L'+(\psi^{-1}_*V_0$ (on $X_0$) the proper transform of $B$ becomes a $(-1)$-curve. It follows that $(\psi^{-1})_*B$ is a $(-3)$-curve, so $\psi$ touches it once. But $\psi$ contracts more than $A$, because otherwise both components of $D_0$ met by $A$ would be $(-3)$-curves, which is impossible. Hence the proper transform of $B$ on $X$ is branching in $D$. It follows that the Hamburger-Noether pairs of the log resolution of $(\PP^2,\bar E)$ are $\binom{6a}{6b}, \binom{6}{3}, \binom{3}{2}$ for some $a>b\geq 1$. From \ref{cor:I and II}(ii) we get $\gamma+d^2\equiv 0\mod 4$. But $\gamma_1=4$, so $\gamma=6$; a contradiction.

We obtain $V_0\cdot L=0$. Then $V$ is connected, so $V=F_V$. Now $V$ is an snc-divisor with $9$ components which has a $(-2)$-tip and has not more than three $(-2)$-tips (it contains more than one when $L$ is a part of a $(-2)$-cycle in $D_1-E_1$). From the Kodaira classification of minimal fibers, we see that $V$ is a fiber of type $II^*$, i.e.\ it is a $(-2)$-fork with twigs of length $1$, $2$, and $5$. Moreover, $L$ meets $V$ twice, exactly in the tips of the twigs of length $2$ and $5$. Since $\psi$ is inner for $D_0+A$, the divisor $D_0+A-E_0$ is an snc divisor consisting of a cycle (having at least $10$ components) and two twigs attached to it: $V_0$ and $C_1'$ (see Fig. \ref{fig:2}).
\begin{figure}[h]\centering\includegraphics[scale=0.4]{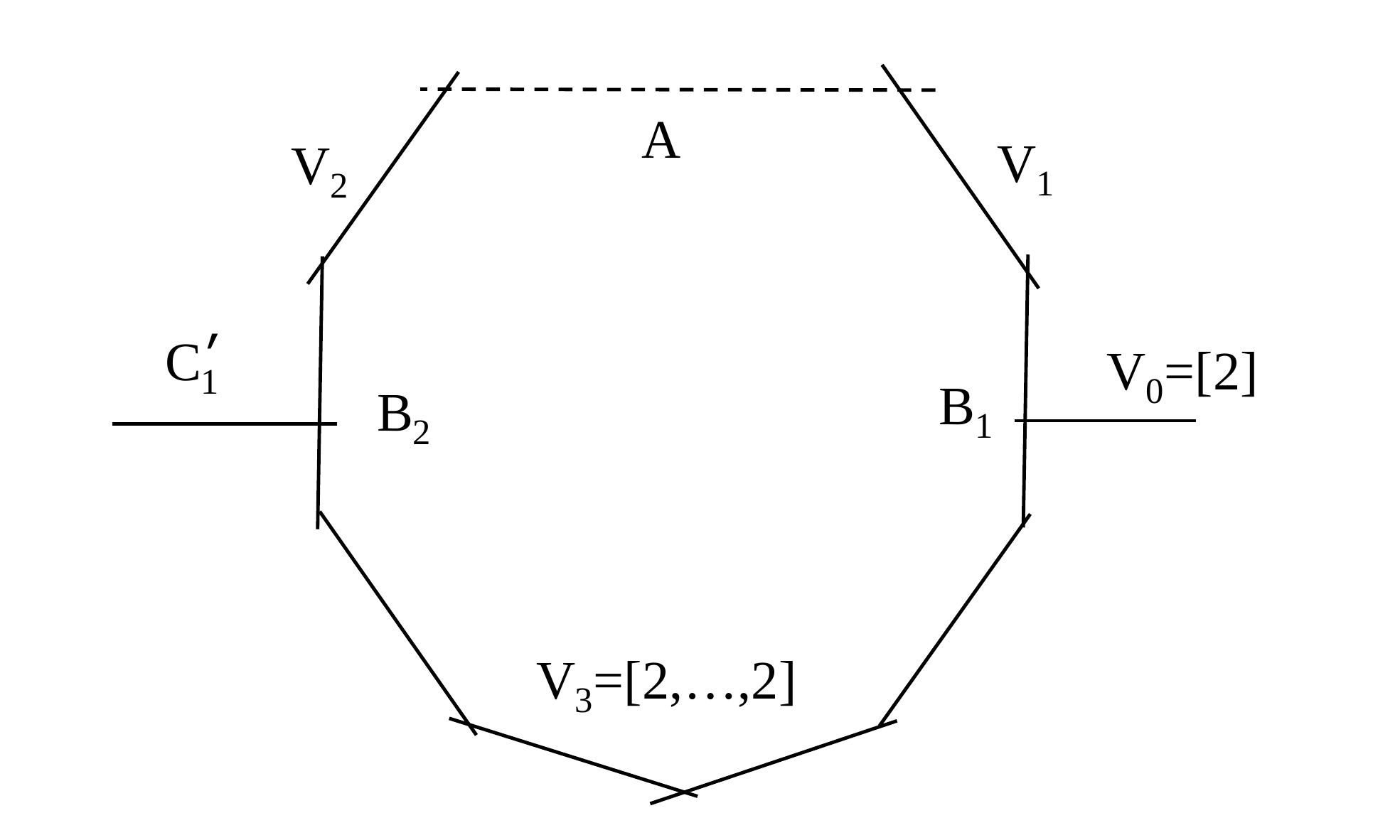}\caption{The divisor $D_0-E_0$ on $X_0$. Proof of Thm. \ref{prop:n=0}, Claim 7.}  \label{fig:2}\end{figure}
Let $B_1$ denote the component of this cycle meeting $V_0$ and let $V_1$ be the second maximal twig of $D_0$ meeting $B_1$ (put $V_1=0$ if $B_1$ is not a branching component of $D_0$). We have $L'\neq B_1$ and $L'\cdot C_1=1$. Let $V_2\neq C_1$ be the maximal twig of $D_0$ meeting $L'$. We denote the chain between $B_1$ and $L'$ by $V_3$. Since this chain is not touched by $\psi$, we have $V_3=[(2)_{v_3}]$ for some $v_3\geq 0$. By definition $D_0-E_0-C_1-V_0+A=V_2+L'+V_3+B_1+V_1+A$ is a cycle.

Consider the case $V_1\neq 0$, i.e.\ $A\cdot B_1=0$. Since $L'$ is the only component of $\wt Q_1$ meeting $C_1$, it is a $(-2)$-curve. Because $R_1$ consists of $(-2)$-curves, it follows that $\psi$ does not touch $L'$ and hence that $A\cdot B_2=0$, so $V_2\neq 0$ and hence $B_1$ and $L'$ are the branching components of $D_0-E_0$. Then the divisor $C_1+L'+V_3+V_2+B_1$ contracts to a point, so $V_2=[(2)_{v_2},v_3+2]$ for some $v_2\geq 0$ and $B_1^2=-v_2-3\leq -3$. In particular, $\psi$ touches $B_1$, so it contracts $V_1$. Similarly, because $V_0=[2]$, we get $V_1=[(2)_{v_1},3]$ for some $v_1\geq 0$. Note that the last component of $V_2$ is not contracted by $\psi$, because $\psi$ does not touch $L'$. Since $\psi$ contracts $V_1$, for $v_1=0$ we get $v_2=1$ or $v_2=v_3=0$, hence $\psi$ touches $L'$ or $\psi(B_2)^2=-3$, both possibilities being already ruled out. Thus $v_1\neq 0$. Because $A\cdot \Delta_0=1$, we get $v_2=0$. Then $\psi$ contracts exactly $A+[(2)_{v_1}]$, so $\psi(B_1)^2=B_1^2=-3$; a contradiction.

Consider the case $V_1=0$, i.e.\ $A\cdot B=1$. Now $L'$ is the unique branching component of $D_0-E_0$. Then $\psi$ contracts exactly $A$ and a maximal $(-2)$-twig contained in $V_2$. Because $\psi$ does not touch $B_2$, we have $V_2=[(2)_{v_2},3,(2)_{v_2'}]$ for some $v_2\geq 1$. The maximal twig of $D_0-E_0$ containing $B_1$ is $[2,v_2+3,(2)_{v_3}]$. We check easily that if $v_3=0$ then $D_0-E_0$ is not contractible to a point. Therefore, $v_3\geq 1$. The contractibility forces $v_2'=0$ and $v_3=1$. Then $\#(D_1-E_1)=6$, which is impossible, because $\#(D_1-E_1)=\rho(X_1)=11$.
\end{proof}

\bcl $D-E$ is a chain. \ecl

\begin{proof} Suppose $D-E$ contains a branching component. Its image on $X_0$, which we denote by $B$, is a branching component of $D_0-E_0$, because $s_1=0$. Now $D_0-E_0$ has at least three tips, so $D_1-E_1$ has at least one tip. From the previous claim it follows that it has exactly one, so $D_0-E_0$ has exactly three and $A$ meets two of them. Denote the maximal twigs of $D_0-E_0$ by $V_1$, $V_2$ and $V_3$, where $V_2$ meets $A$ and $V_1$ contains $C_1$. Suppose $C_1$ is a tip of $D_0-E_0$. Then $A$ meets the tip of $V_3$. Let $U$ be a component of $V_2$ meeting $\psi(B)$ which is a part of a cycle contained in $D_0-E_0$. The contraction of $D_1-E_1-U$ maps $X_1$ onto $\PP^2$ and $\bar E$ onto a unicuspidal curve with a cusp of multiplicity three. By \ref{lem:min(mu)>=4} this is a contradiction. Thus $C_1$ is not a tip of $D_0-E_0$.

Suppose $A\cdot V_3=0$. Then $V_3$ is not touched by $\psi$, so $V_3=[(2)_{v_3}]$ for some $v_3\geq 1$. Because  $D_0-E_0$ contracts to a point, we have $V_2=[(2)_{v_2},v_3+2]$ for some $v_2\geq 0$. Because the chain joining $C_1$ and $B$ is not touched by $\psi$, it consists of $(-2)$-curves, so $V_1=[(2)_{v_1},v_1'+2,1,(2)_{v_1'}]$ for some $v_1,v_1'\geq 0$. Then $B^2=-v_1-3\leq -3$, so $\psi$ touches $B$. Also, since $\cal C_+=0$, $\psi$ does not contract the component of $V_1$ of self-intersection $-v_1'-2$. It follows that $v_1\neq 0$, as otherwise $\psi$ would contract exactly $A+[(2)_{v_2}]$, so $\psi$ would not touch $B$. Then $v_1'\neq 0$. The curve $A$ meets exactly one component of $\Delta_0$, so we get $v_2=0$. But then $\psi$ touches $B$ at most once, so $\psi(B)^2\leq -v_1-2\leq -3$; a contradiction.

Thus $A\cdot V_3=1$. The twig $V_1$ is not touched by $\psi$, so $V_1-C_1$ consists of $(-2)$-curves. Because $C_1$ is not a tip of $D_0-E_0$, we have $V_1=[(2)_{v_1},1]$ for some $v_1\geq 1$. The divisor $D_1-E_1$ contracts to a cycle of rational curves, of which exactly all but one are $(-2)$-curves. It follows that $d(D_1-E_1)\neq 0$, so the components of $D_1-E_1$ are numerically independent. Because $D_1-E_1$ has $\rho(X_1)$ components, they are a basis of $\NS_\Q(X_1)$. Since the divisor $W=\psi(V_2+B+V_3)$ is a $(-2)$-cycle meeting $C_1'$ once, $K_1+W$ intersects all components of $D_1-E_1$ trivially, hence $K_1+W$ is numerically trivial. But its intersection with $K_1$ is $K_1\cdot(K_1+W)=K_1^2=\zeta-K_1\cdot E_1=3-\gamma_1<0$; a contradiction.
\end{proof}

We are left with the case when $D-E$ is a chain. Then $T_0=D_0-E_0$ is a chain. By Claim 3 $\#\cal L=0$, so $\eta=0$, so there exists a maximal $(-2)$-twig $[(2)_{v_1}]$, $v_1\geq 1$ in $T_0$, such that $A$ meets it in a tip of $D_0$. Let $V_0$ be the second component of $T_0$ met by $A$. Denote the maximal twigs of $T_0$ by $V_1, V_2$, where $V_1$ contains $[(2)_{v_1}]$. If $V_0$ is a tip of $T_0$ then $D_1-E_1$ is a $(-2)$-cycle, so the contraction of $D_1-E_1-L$ maps $X_1$ onto $\PP^2$ and $\bar E$ onto a unicuspidal rational curve with a cusp of multiplicity $\tau_1$. By claim 4 $\tau_1=2$, so \ref{lem:min(mu)>=4} implies that $V_0$ is not a tip of $T_0$. Since $\#\cal L=0$, $V_0$ does not meet $[(2)_{v_1}]$. Therefore, $\psi$ contracts exactly $A+[(2)_{v_1}]$. Then $V_0^2=-v_1-3$ and $T_0-C_1-V_0$ consists of one $(-3)$-curve and some number of $(-2)$-curves, so $T_0=[(2)_{v_1},3,(2)_{v_1'},1,(2)_{v_2'},v_1+3,(2)_{v_2}]$ or $T_0=[(2)_{v_1},3,(2)_{v_1'},v_1+3,(2)_{v_2'},1,(2)_{v_2}]$ for some $v_1',v_2'\geq 0$ and $v_2\geq 1$. In the second case the contractibility of $T_0$ to a point forces $v_\neq 0$, hence $v_2'=0$ and $v_2=v_1+1$, which implies that $T_0$ contracts to $[(2)_{v_1},3,(2)_{v_1'},1]$ and hence to a $(-2)$-chain; a contradiction. Similarly, if in the first case $v_2'=0$ then $v_1'=v_1+1$ and $v_2=0$, which is false. Thus $v_2'>0$. Then $v_1'=0$ and $v_2'=1$. Now again the contraction of $D_1-E_1-L$ maps $X_1$ onto $\PP^2$ and $\bar E$ onto a unicuspidal curve with a semi-ordinary cusps. By \ref{lem:min(mu)>=4} this is a contradiction.
\end{proof}

\med
\section{Process of length $n=0$.}\label{sec:n=0}

We keep the notation and assumptions from previous sections. In particular, the rational cuspidal curve $\bar E\subset \PP^2$ violates the Coolidge-Nagata conjecture and $\pi_0\:(X_0,D_0)\to (\PP^2,\bar E)$ is the minimal weak (see Subsection \ref{ssec:cuspidal}) resolution of singularities. By \ref{prop:fundamentals}(ii) $2K_0+E_0$ is effective. By \ref{prop:n=0} $2K_0+D_0^\flat$ is numerically effective. From the log MMP point of view this is the easiest possible situation ($n=0$). However, now the bounds on geometric parameters describing $\bar E\subset \PP^2$ are weaker, so ruling out remaining cases, and hence establishing the Coolidge-Nagata conjecture, requires considerable effort.

Recall that, by definition $p_2(\PP^2,\bar E)=h^0(2K_X+D)$, where $\pi\:(X,D)\to (\PP^2,\bar E)$ is the minimal log resolution of singularities and that $\zeta=K\cdot (K+E)=K_0\cdot (K_0+E_0)$. The reduced total inverse image of the cusp $q_j\in \bar E$ is, as before, denoted by $Q_j$, the image of $Q_j$ on $X_0$ by $\wt Q_j$ and the unique $(-1)$-curve of this image by $C_j$. By \ref{cor:fE>=4}(iv) $\gamma_0=-E_0^2\geq 4$ and by \ref{thm:CN1}(c) $\bar E$ has at most two cusps, i.e. $c\leq 2$. Let us recollect some bounds we have obtained.

\blem\label{lem:case_n=0} With the above notation: \benum[(i)]

\item $\gamma_0+\tau^*\leq 2p_2(\PP^2,\bar E)+2\zeta$,

\item $|\zeta|\leq p_2(\PP^2,\bar E)-2\leq 2$,

\item $\ds \sum_{j=1}^c K\cdot Q_j = p_2(\PP^2,\bar E)-\zeta$,

\item  $\ind(D) \leq 5-p_2(\PP^2,\bar E)\leq 2$.
\eenum
\elem

\begin{proof}(i) Arguing as in \ref{cor:n=1_basics}(iii) we get $0\leq (2K_0+D_0^\flat)\cdot (2K_0+E_0)=(2K_0+D_0)\cdot (2K_0+E_0)=2\zeta-\gamma_0+2p_2(\PP^2,\bar E)-\tau^*$. For (ii) see \ref{thm:CN1}(d) and (b). (iii) $\sum_{j=1}^c K\cdot Q_j =K\cdot (D-E)=K\cdot (K+D)-K\cdot (K+E)=p_2(\PP^2,\bar E)-\zeta$. (iv) is \ref{lem:mmp_basics}(iv).
\end{proof}

Note that $K_0\cdot \wt Q_j=K\cdot Q_j-\tau_j^*-1$, so \ref{lem:case_n=0}(iii) gives $$K_0\cdot \sum_{j=1}^c(\wt Q_j - C_j)=p_2(\PP^2,\bar E)-\zeta-\tau^*.$$

\blem\label{lem:zeta>=0_for_p2=3} If $p_2(\PP^2,\bar E)=3$ then $\zeta\neq -1$. \elem

\begin{proof} Suppose $\zeta=-1$. By \ref{lem:case_n=0} $\ind(D)\leq 2$ and $K_0\cdot \sum_{j=1}^c (\wt Q_j-C_j)=4-\tau^*$. Also, $\gamma_0+\tau^*\leq 4$, so $\gamma_0=4$ and $\tau^*=0$, hence $C_j\cdot E_0=\tau_j=2$ and $s_j=1$ for every $j\leq c$. In particular, $C_j$ is a tip of $\wt Q_j$. Furthermore, $K\cdot Q_1+K\cdot Q_2=4$, so $K_0\cdot \sum_{j=1}^c(\wt Q_j-C_j)=4$. We compute $K_0^2=\zeta-K_0\cdot E_0=-3$, hence by the Noether formula $\rho(X_0)=13$, so by \eqref{eq:rho(Xi)+i=|D|} $\#\wt Q_1+\#\wt Q_2=12$.

First consider the case $c=1$. Let $H$ be the unique component of $\wt Q_1-C_1$ meeting $C_1$. Since $\wt Q_1$ contracts to a smooth point, $H^2=-2$. Put $V=\wt Q_1-C_1-H$. Clearly, $K_0\cdot V=4$ and $\#V=10$. The divisor $V$ is vertical for the elliptic fibration $p\:X_0\to \PP^1$ induced by $|E_0+2C_1|$ (see \ref{lem:elliptic_ruling}). Since $K_0^2=-3$, by \ref{lem:elliptic_fibers} there exists a birational morphism $\alpha\:X_0\to Z$ which contracts $C_1$ and two other vertical components, such that the induced elliptic fibration $p_Z\:Z\to \PP^2$ is minimal. Note that if $L\subset X_0$ is a vertical $(-1)$-curve then it does not meet $H$, because otherwise the intersection of the nef divisor $C_1+H+L$ with the effective divisor $2K_0+E_0$ would be $-4+\tau_1<0$, which is impossible. The divisor $V$ has $\beta_{\wt Q_1}(H)-1\leq 2$ connected components. It does not contains fibers, because its intersection matrix is negative definite.

Suppose $V$ is not contained in one fiber. Denote the connected components of $V$ by $V_1$ and $V_2$. For $i=1,2$ let $F_i$ be the fiber containing $V_i$ and let $L_i$ be a component of $F_i-V_i$. Clearly, $L_i\not \subset D_0$. If, say, $F_1\neq V_1+L_1$ then the intersection matrix of $C_1+V+L_1$ is negative definite of rank $12$, so since $\rho(X_0)=13$, the components of $E_0+C_1+V+L_1$ are a basis of $\NS_\Q(X_0)$, contradicting the Hodge index theorem. Therefore, $F_i=V_i+L_i$ for $i=1,2$. The argument shows also that there are no singular fibers other than $E_0+C_1$, $F_1$ and $F_2$. Consider first the case when both $L_i$ are $(-1)$-curves. Since $\alpha_*F_i$ are minimal, $K_Z\cdot \alpha_*(F_1+F_2)=0$. But $K_Z\cdot \alpha_*(F_1+F_2)=\alpha^*K_Z\cdot V=K_0\cdot V-(L_1+L_2)\cdot V$, so $(L_1+L_2)\cdot V=4$. We have $L_i\cdot V=L_i\cdot D_0$, so by \ref{prop:fundamentals}(iii) $L_i$ meets $V$ in exactly two points and transversally. Let $W$ be the image of $X$ after the contraction of the proper transforms of $L_i$ and let $D_W$ be the image of $D$. The pair $(W,D_W)$ is an snc-minimal smooth completion of $W_0=X_0\setminus (D_0+L_1+L_2)$. We have $\kappa(W_0)\geq \kappa(X_0\setminus D_0)\geq 0$ and $(K_W+D_W)^2=(K+D+(\varphi^{-1})_*(L_1+L_2))^2=(K+D)^2+2=K\cdot (K+D)=p_2(\PP^2,\bar E)=3$. Since $W_0$ contains no lines, \ref{lem:BMY} gives $(K_W+D_W)^2+\ind(D_W)\leq 3\chi(W_0)=3\chi(X_0\setminus D_0)=3$, hence $\ind(D_W)=0$. But the image of $E$ is a maximal twig of $D_W$, so $\ind(D_W)>0$; a contradiction. It remains to consider the case when, say, $L_1$ is not a $(-1)$-curve. Then $L_2$ is a $(-1)$-curve and $V_1+L_1$ consists of $(-2)$-curves. Because $\wt Q_1$ contracts to a smooth point, $V_1$ is a $(-2)$-chain meeting $H$ in a tip. By \ref{prop:fundamentals}(iii) $L_i\cap D_0\geq 2$. Since $L_2\cdot H=0$, we get $L_2\cdot V_2\geq 2$. We have in fact $L_2\cdot V_2=2$. Indeed, if $L_2\cdot V_2\geq 3$ then $\alpha_*F_2$ is not snc, so by \ref{lem:elliptic_fibers}(ii) $F_2$ is a fork with three tips, hence $V_2$ is not connected, which is false. Let $W$ be the image of $X$ after snc-minimalization of $D+L_2'$. This minimalization factorizes $\alpha$ (hence is a composition of at most two contractions) and, because of the connectedness of $V_2$, is inner for $D+L_2'$. The pair $(W,D_W)$ is an snc-minimal smooth completion of $W_0=X_0\setminus (D_0+L_2)$. We have $\kappa(W_0)\geq \kappa(X_0\setminus D_0)\geq 0$ and $(K_W+D_W)^2=(K+D+L_2')^2=(K+D)^2+1=K\cdot (K+D)-1=p_2(\PP^2,\bar E)-1=2$. By \ref{lem:BMY} $\ind(D_W)\leq 1$. However, the images of $E$, $V_2$ and the $(-2)$-tip of $D$ contracted by $\varphi_1$ are maximal twigs of $D_W$, hence $\ind(D_W)\geq \frac{1}{6}+\frac{1}{2}+\frac{1}{2}$; a contradiction.

Thus $V$ is properly contained in some fiber $F$. Let $L$ be a component of $F-V$. We have in fact $F=V+L$, as otherwise the intersection matrix of $C_1+V+L$ is negative definite of rank $12$ and we get a contradiction with the Hodge index theorem as before. Then $F$ is the unique singular fiber other than $E_0+C_1$ and hence $\alpha$ contracts $L$ and one irreducible component $V_0$ of $V$.  It follows that $L^2=-1$, $V_0^2=-2$ and $L\cdot V_0=1$. Put $V'=V-V_0$. We have $\alpha^*K_Z\cdot V'=K_Z\cdot \alpha_*F_1=0$. Because $\alpha^*K_Z=K_0-V_0-2L-C_1$, we obtain $(V_0+2L)\cdot V'=K_0\cdot V'=4$. If $L\cdot V'>1$ then $\alpha_*F$ is not snc, so by \ref{lem:elliptic_fibers}(ii) $\#\alpha_*F\leq 3$, hence $10=\#V=\#\alpha_*F+1\leq 4$; a contradiction. Since $X_0\setminus D_0$ contains no lines, $L\cdot V=L\cdot D_0\geq 2$, so $L\cdot V'\geq 1$. We obtain $L\cdot V'=1$ and $V_0\cdot V'=2$. Then $\alpha_*F$ is not snc and has $\#V-1=9$ components. A contradiction by \ref{lem:elliptic_fibers}(ii).

Now consider the case $c=2$. By \ref{rmk:ind>half} the contribution of the first branch of each $Q_j$ to $\ind(D)$ is more than $\frac{1}{2}$. Since $\varphi_1$ contracts two $(-2)$-tips of $D$, in case both $Q_j$ are branched we get $\ind(D)>4\cdot\frac{1}{2}=2$, which contradicts \ref{lem:case_n=0}(iv). Thus, we may assume $Q_2$, and hence $\wt Q_2$, is a chain. Because $\tau_2^*=0$, $q_2\in \bar E$ is a semi-ordinary cusp, hence $Q_2=[2,1,3,(2)_k]$ for some $k\geq 0$. Let $\ind_j$ be the contribution of the maximal twigs of $D$ contained in $Q_j$ to $\ind(D)$. We get $\ind_1\leq 2-\ind_2=\frac{1}{2}+\frac{2}{2k+3}$. Since $K_0\cdot \wt Q_1=2-K_0\cdot \wt Q_2=3$, the cusp $q_1\in\bar E$ is not semi-ordinary, hence $\wt Q_1$ is not a chain. Then $Q_1$ has at least two branches. Let $\ind_1'$ be the contribution of the two maximal twigs of $D$ contained in the first branch of $Q_1$. We have $\ind_1\geq \frac{1}{2}+\ind_1'>1$, hence $k=0$ and $\ind_1'\leq \frac{2}{3}$. If $Q_1$ has more than one branching component then, assuming that $V_0$ is a tip of $Q_1$ contained in the second branch, we get $\frac{7}{6}\geq \ind_1\geq \ind_1'+\frac{1}{-V_0^2}+\frac{1}{2}>1+\frac{1}{-V_0^2}$, so $V_0^2<-6$. But because $Q_1$ contains a $(-3)$-curve, we would get $K\cdot Q_1+K\cdot Q_2=K\cdot Q_1\geq K\cdot V_0\geq 5$, which is false.

We infer that $Q_1$ has exactly one branching component. We have $\#\wt Q_1=12-\#\wt Q_2=11$. Let $\beta\:X_0\to Z$ be the contraction of the maximal twig of $\wt Q_1$ containing $C_1$ (it is of type $[1,(2)_m]$ for some $m\geq 0$). Then $\beta_*\wt Q_1$ is a chain with $K_Z\cdot \beta_*\wt Q_1=K_0\cdot \wt Q_1=3$, hence it is as in \ref{lem:Q_with_small_KQ}(v). Since $\ind_1'\leq \frac{2}{3}$, it is as in (v.8) with $k=0$, i.e. $\beta_*\wt Q_1=[3,2,2,2,1,5]$. Put $V=\wt Q_1-C_1$. The divisor $V$ has $10$ components, is connected and vertical for the elliptic fibration $p\:X_0\to \PP^2$ induced by $|E_0+2C_2|$. Let $F$ be the fiber containing $V$. The intersection matrix of $V$ is negative definite, so $F\neq V$. Let $L$ be a component of $F-V$. Since $K_0^2=-3$, there is a birational morphism $\alpha\:X_0\to Z$ which contracts $C_2$ and two other vertical components, such that the induced elliptic fibration $p_Z\:Z\to \PP^2$ is minimal. If $\#F>11$ then the intersection matrix of $C_2+V+L$ is negative definite of rank $12=\rho(X_0)-1$, so we have a contradiction with the Hodge index theorem as before. Therefore, $F=V+L$. The argument with Hodge theorem shows also that there are no singular fibers other than $F$ and $E_0+C_2$. It follows that $L$ is a $(-1)$-curve. Since $\#\alpha_*F\geq \#F-2=9$, \ref{lem:elliptic_fibers} implies that $\alpha_*F$ is snc, hence $L$ meets $V$ transversally, in two points belonging to two different components of $V$.  If $L$ does not meet a $(-2)$-curve in $V$ then, by the explicit description of $\wt Q_1$ we have, after the contraction of $L$ the fiber contains no $(-1)$-curve and does not consist of $(-2)$-curves, which is in contradiction with \ref{lem:elliptic_fibers}(i). Thus $L$ meets a $(-2)$-curve in $V$ and hence there is a chain of $(-2)$-curves $f\subset V$, whose tips meet $C_1$ and $L$. Then $C_1+f+L$ is nef, so $0\leq (C_1+f+L)\cdot (2K_0+E_0)=-4+\tau_1$; a contradiction.
\end{proof}

\med

Since $n=0$, the minimalization process for $(X_0,\frac{1}{2}D_0)$ (see Subsection \ref{ssec:logMMP}) contracts only curves in $D_0$, hence the log MMP has not much more to say. Still, we can minimalize the pair $(X_0,E_0)$. Let $$(Y_0=X_0,E_0)\xra{\sigma_1} (Y_1,E_1)\xra{\sigma_2} \ldots \xra{\sigma_t} (Y_t,E_t)$$ with $E_{i+1}=(\sigma_{i+1})_*E_i$ be the process of almost minimalization of the pair $(X_0,E_0)$, i.e. a maximal sequence of blowdowns, such that $\Exc \sigma_{i+1}\cdot E_i\leq 1$ and $E_t\neq 0$. Note that by \ref{prop:fundamentals}(ii) $2K_0+E_0\geq 0$, hence $K_i+E_i\geq 0$ (as a $\Q$-divisor). The definition implies that $Y_i$ and $E_i$ are smooth. Put $T=D-E=Q_1+Q_2$ and $T_0=D_0-E_0=\wt Q_1+\wt Q_2$. For $i=0,\ldots, t-1$ put $T_{i+1}=(\sigma_{i+1})_*T_i$, $L_i=\Exc \sigma_{i+1}\subset Y_i$, $K_i=K_{Y_i}$ and $\zeta_i=K_i\cdot (K_i+E_i)$. In particular, $\zeta=K\cdot (K+E)=\zeta_0$. By \cite[6.24]{Fujita-noncomplete_surfaces} $(K_t+E_t)^+=K_t+aE_t$, where $a=(1-\frac{2}{\gamma_t})$, $\gamma_t=-E_t^2$. For $j=0,1$ put $\Theta_j=\ds\sum_{i:L_i\cdot E_i=j}T_i\cdot L_i$ and $\theta_j=\#\{i:L_i\cdot E_i=j\}$. Clearly, $\theta_0+\theta_1=t$.

\blem\label{lem:(X,E)} For each $i$ we have $L_i\cdot (T_i+E_i)\geq 2$. In particular, $\Theta_0\geq 2\theta_0$ and $\Theta_1\geq \theta_1$. Furthermore, the following equations hold:
\beq \zeta_t=\zeta+\theta_0 \text{\ \ and \ \ }\gamma_t=\gamma_0-\theta_1\geq 4,\label{eq:1}\eeq
\beq p_2(\PP^2,\bar E)-\zeta+(1-\frac{4}{\gamma_t})c\geq \frac{2}{\gamma_t}(\Theta_1+\tau^*)+\Theta_0,\label{eq:2}\eeq  and \beq \zeta_t=((K_t+E_t)^+)^2+2-\frac{4}{\gamma_t}\geq 2-\frac{4}{\gamma_t}.\label{eq:3}\eeq
\elem

\begin{proof} From the definition of $\sigma_{i+1}$ we see that every pair $(Y_i,E_i)$ is smooth. The equations \eqref{eq:1} follow from the definition and basic properties of blowups. By \ref{cor:fE>=4}(iv) $\gamma_t\geq 4$. We have $((K_t+E_t)^+)^2=(K_t+E_t)^2-(\frac{2}{\gamma_t} E_t)^2=\zeta_t-2+\frac{4}{\gamma_t}$, hence \eqref{eq:3}.

We show that \eqref{eq:2} is equivalent to $T_t\cdot (K_t+E_t)^+\geq 0$. Put $t_i=T_i\cdot L_i$. We compute $$T_{i}\cdot (K_{i}+aE_i)-T_{i+1}\cdot (K_{i+1}+a E_{i+1})=T_{i}\cdot (K_{i}-\sigma_{i+1}^*K_{i+1})+aT_{i}\cdot (E_{i}-\sigma_{i+1}^*E_{i+1})=$$ $$=T_{i}\cdot L_i-a T_{i}\cdot (L_i\cdot E_i) L_i=t_i(1-aL_i\cdot E_i).$$ Since $T_t\cdot (K_t+aE_t)=T_t\cdot (K_t+E_t)^+$, we get $$T_0\cdot ( K_0+aE_0)-T_t\cdot (K_t+E_t)^+=\Theta_0+(1-a)\Theta_1,$$ so $T_0\cdot K_0+aT_0\cdot E_0\geq \Theta_0+ (1-a)\Theta_1.$ We have $T_0\cdot K_0+\tau^*+c=T_0\cdot K_0+\tau-s=T\cdot K=p_2(\PP^2,\bar E)-\zeta$ and $T_0\cdot E_0=\tau+c-s=\tau^*+2c$, hence $$T_0\cdot K_0+aT_0\cdot E_0=(p_2(\PP^2,\bar E)-\zeta-\tau^*-c)+ (1-\frac{2}{\gamma_t})(\tau^*+2c)=p_2(\PP^2, \bar E)-\zeta+(1-\frac{4}{\gamma_t})c-\frac{2}{\gamma_t}\tau^*.$$ The inequality \eqref{eq:2} follows.

\smallskip For every $i$ put $D_i=T_i+E_i$. It remains to prove that $L_i\cdot D_i\geq 2$.

\setcounter{claim}{0}
\bcl  If $L\subset X_0$ is a $(-1)$-curve for which $L\cdot E_0\leq 1$ then $L\not\subset D_0$. Moreover, if $L\cdot D_0\leq 2$ then $L\cdot (\Upsilon_0+\Delta_0)=0$. \ecl

\begin{proof} All $(-1)$-curves in $D_0$ meet $E_0$ at least twice, so $L$ is not a component of $D_0$. By \ref{prop:n=0} $2K_0+D_0^\flat$ is nef, so $L\cdot D_0-2=L\cdot (2K_0+D_0)\geq L\cdot (\Upsilon_0+\Delta_0^+)+L\cdot \Bk_{D_0}\Delta_0^-$ and the right hand side of the inequality is positive if $L$ meets $\Upsilon_0+\Delta_0$.
\end{proof}

For $p\in D_i$ denote by $\Exc_p\subset X_0$ the part of the reduced exceptional divisor of the contraction $(X_0,D_0)\to (Y_{i},D_{i})$ lying over $p$.

\bcl Let $p$ be a point of normal crossings of $D_i$ for which $\Exc_p\neq 0$. Then $p$ is not a smooth point of $D_i$, $D_i\setminus\{p\}$ is connected and $\Exc_p=[1]$.\ecl

\begin{proof} Let $U_1$ and $U_2$ be the analytic branches of $D_i$ at $p$. We assume $U_2=0$ if $p\in D_i$ is smooth. Let $L\subset X_0$ be some $(-1)$-curve in $\Exc_p$. Since $E_i$ is smooth, we have $L\cdot E_0\leq 1$, so $L$ is not a component of $D_0$. Because $p$ is a point of normal crossings of $D_i$, we have $L\cdot D_0\leq 2$ and the intersection is transversal. Since $\PP^2\setminus\bar E$ contains no affine lines, we get $L\cdot D_0=2$. Since $D_0$ is connected and $X_0\setminus D_0$ is affine, all blowups over $p$ are inner for $U_1+U_2$ (hence $U_2\neq 0$). We infer that $\Exc_p$ is a chain and $\Exc_p-L$ is contained in $D_0$. If $\Exc_p-L$ is nonzero then it contains a $(-2)$-curve meeting $L$, so $L\cdot \Delta_0>0$, which contradicts Claim 1. Therefore, $\Exc_p=[1]$. Since $D_0$ is connected, $D_i\setminus \{p\}$ is connected.
\end{proof}

Suppose now that $L_i\cdot D_i\leq 1$ for some $i$. We have $L_i\subset T_i$, because otherwise the proper transform of $L_i$ on $\PP^2\setminus \bar E$ is an affine line, which is impossible by \ref{prop:fundamentals}(iii). Recall that $(-1)$-curves in $T_0$ meet $E_0$ at least twice, so the proper transform of $L_i$ on $X_0$ is not a $(-1)$-curve. Hence a contraction of some $L_j$ with $j<i$ touches the proper transform of $L_i$. Note that the connected components of the exceptional divisor of the morphism $X_0\to Y_t$ contract to smooth points on $Y_t$, so their structure is well known. Let $L_i'$ be the proper transform of $L_i$ on $X_{i-1}$. By renaming $L_j$'s with $j<i$ if necessary we may assume that the contraction of $L_{i-1}$ touches $L_i'$. Then $\sigma_{i-1}^*L_i=L_i'+L_{i-1}=[2,1]$. By induction, we may assume that $L_j\cdot D_j\geq 2$ for every $j<i$. Using the projection formula we compute $$1\geq L_i\cdot D_i=L_i'\cdot D_{i-1}+L_{i-1}\cdot D_{i-1}=\beta_{D_{i-1}}(L_i')+(L_{i-1}\cdot D_{i-1}-2)\geq \beta_{D_{i-1}}(L_i').$$ But since $D_{i-1}$ is connected and $E_{i-1}\neq 0$ we have $\beta_{D_{i-1}}(L_i')\geq 1$, hence $\beta_{D_{i-1}}(L_i')=1$ and $L_{i-1}\cdot (D_{i-1}-L_i')=2-1=1$. If $L_{i-1}\not \subset D_{i-1}$ then applying Claim 2 to the image of $L_{i-1}$ on $D_i$ we see that the exceptional divisor over this point equals (the proper transform of) $L_{i-1}$, hence there is a $(-1)$-curve $L_{i-1}$ on $X_0$, for which $L_{i-1}\cdot D_0=2$ and (since $L_i'$ is a $(-2)$-tip of $D_i$) $L_{i-1}\cdot \Delta_0\geq 1$, in contradiction to Claim 1. Thus $L_{i-1}$ is a component of $D_{i-1}$, the unique component of $D_{i-1}$ meeting $L_i'$.

Let $U$ be the subdivisor of $D_{i-1}-L_i'-L_{i-1}$ consisting of these component which meet $L_{i-1}$. Since  $L_{i-1}\cdot U=L_{i-1}\cdot (D_{i-1}-L_i'-L_{i-1})=2$, the intersection $L_{i-1}\cap U$ consists of at most two points. By Claim 2 for all points $p\in L_{i-1}\cup L_i'$ not belonging to $U$ we have $\Exc_p=0$. Blow up over $L_{i-1}\cap U$ until the reduced total transform $M$ of $L_i'+L_{i-1}$ contains only nc-points of the total transform of $L_i'+L_{i-1}+U$. Note that if a $(-1)$-curve on $D_j$ meets $E_j$ at most once then $j>0$, hence the created surface is dominated by $X_0$. Denote the proper transform of $U$ by $U'$. If $\#L_{i-1}\cap U=1$ and the analytic branch of $U$ at $L_{i-1}\cap U$ is irreducible then $M=[2,2,\ldots,2,3,1,2]$ and $U'$ meets $M$ once in the unique $(-1)$-curve $L_M\subset M$. In other cases $M=[2,2,\ldots,2,1]$ and $U$ meets $L_M$ in two points. By the connectedness of $D_0$ the divisor $M$ is contained in the image of $D_0$. Since $L_M$ meets the proper transform of $E_0$ at most once, at least one more blowup over some point $q\in L_M$ is needed to recover $X_0$. By Claim 2 $\Exc_q=[1]$, $M=[2,2,\ldots,2,1]$ and $q\in U'$. Clearly, $\Exc_q$ is not a component of $D_0$ and $\Exc_q\cdot \Delta_0>0$; a contradiction with Claim 1.
\end{proof}

\blem\label{lem:zeta>=1_for_p2=3} If $p_2(\PP^2,\bar E)=3$ then $\zeta\neq 0$. \elem

\setcounter{claim}{0}
\begin{proof} Suppose $p_2(\PP^2,\bar E)=3$ and $\zeta=0$. The inequality \eqref{eq:2} reads as $$3+(1-\frac{4}{\gamma_t})c\geq \frac{2}{\gamma_t}(\Theta_1+\tau^*)+\Theta_0.$$

\bcl $\Theta_0=2$ and $\gamma_t=4$. \ecl

\begin{proof}
If $\Theta_0\geq 4$ then the inequality above gives $\gamma_t(c-1)\geq 2(\Theta_1+\tau^*+2c)>0$, so by \ref{thm:CN1}(3) $c=2$ and then $\gamma_0\geq \gamma_t\geq 8$, in contradiction to \ref{lem:case_n=0}(i). Thus $\Theta_0\leq 3$ and hence $\theta_0\leq 1$. But by \ref{lem:(X,E)} $\theta_0=\zeta_t\geq 2-\frac{4}{\gamma_t}\geq 1$, so $\theta_0=\zeta_t=1$ and hence $\gamma_t=4$. The above inequality gives \beq \Theta_1+\tau^*+2\Theta_0\leq 6.\label{eq:Theta_for_p2=3}\eeq
Suppose $\Theta_0\neq 2$. Then $\Theta_0=3$, so $\Theta_1=\tau^*=0$, and consequently $\gamma_0=4$, $L_0\cdot E_0=0$ and $L_0\cdot D_0=3$. Let $p\:X_0\to \PP^1$ be the elliptic fibration given by the linear system $|2C_1+E_0|$ (cf. \ref{lem:elliptic_ruling}). Since $L_0+C_j$ intersects $2K_0+E_0$ negatively, it cannot be nef, so $L_0\cdot C_j=0$ for every $j\leq c$. We have $K_0^2=\zeta-K_0\cdot E_0=-2$, so if $\alpha\:X_0\to Z$ is the contraction of $C_1+L_0$ the fibers of the induced elliptic fibration of $Z$ are minimal. Let $F$ be the reduced fiber of $p$ containing $L_0$. If $\wt Q_1\neq C_1$ then the unique component $H$ of $\wt Q_1$ meeting $C_1$ is horizontal for $p$. Then $L_0\cdot H=0$, because otherwise $C_1+H+L_0$ would be a nef divisor whose interesection with $2K_0+E_0$ is $-4+(C_1+L_0)\cdot E_0=L_0\cdot E_0-2<0$, which is impossible. It follows that $L_0$ does not meet horizontal components of $D_0$. Since $L_0\cdot T_0=3$, $\alpha_*F$ is not snc, so by \ref{lem:elliptic_fibers} $F$ is a fork with three tips (components of $T_0$) and $L_0$ as a branching component. Since $t=\theta_0=1$, none of the tips is a $(-2)$-curve, so by \ref{lem:elliptic_fibers} all of them are $(-3)$-curves. In case $c=2$ they do not meet $C_2$, because otherwise $\wt Q_2$ would not be contractible to a smooth point. Since $D_0$ is connected, it follows that $H$ meets all three $(-3)$-twigs, so $H\cdot F\geq 3$. But $H\cdot F\leq H\cdot (2C_1+E_0)=2$; a contradiction.
\end{proof}

The inequality \eqref{eq:Theta_for_p2=3} gives $\Theta_1+\tau^*\leq 2$.

\bcl $\tau^*\in\{1,2\}$. \ecl

\begin{proof}
Suppose $\tau^*=0$. By \eqref{eq:1} $\gamma_0=4+\theta_1$. By \ref{cor:I and II}(ii) $\gamma_0+d^2\equiv 0 \mod 4$, so $\gamma_0$ is congruent to $0$ or $3$ mod $4$. Because $\gamma_0\in \{4,5,6\}$ by \ref{lem:case_n=0}(i), we get $\gamma_0=4$, which implies that $\theta_1=0$. We get $L_0\cdot E_0=0$. As before we show that $L_0\cdot C_j=0$ for all $j\leq c$ and $L_0\cdot H=0$, where $H=0$ if $\wt Q_1=C_1$ and $H$ is the unique component of $\wt Q_1-C_1$ meeting $C_1$ otherwise. Thus, $L_0$ does not meet horizontal components of $D_0$. Let $F$ be the reduced fiber of the elliptic fibration of $X_0$ induced by $|2C_1+E_0|$ which contains $L_0$. Since $K_0^2=\zeta-K_0\cdot E_0=-2$, after the contraction of $C_1+L_0$ all fibers become minimal. Let $V$ be the subdivisor of $D_0$ consisting of components meeting $L_0$. We have $L_0\cdot V=L_0\cdot T_0=\Theta_0=2$. If $V$ is irreducible then the image of $F$ after the contraction of $L_0$ contains a node or a cusp, hence is irreducible by \ref{lem:elliptic_fibers}(ii), which implies that $V$ is a $(-4)$-curve. If $\#V=2$ then the image is a minimal reducible fiber, so by \ref{lem:elliptic_fibers}(i) it consists of $(-2)$-curves, hence $V$ consists of two $(-3)$-curves. In both cases we get $K_0\cdot V=2$. By \ref{lem:case_n=0}(iii) $K\cdot (Q_1+Q_2)=3$, which gives $K_0\cdot \sum_{j=1}^c (\wt Q_j-C_j)=3$. It follows that $K_0\cdot (\sum_{j=1}^c (\wt Q_j-C_j)-V)=1$, so $\sum_{j=1}^c (\wt Q_j-C_j)-V$ contains a $(-3)$-curve not meeting $L_0$. This curve is horizontal, hence it is $H$. But if $H^2=-3$ then $\wt Q_1$ does not contract to a smooth point; a contradiction.
\end{proof}

It follows that $\Theta_1\leq 2-\tau^*\leq 1$.

\bcl $\Theta_1=0$. \ecl

\begin{proof}
Suppose $\Theta_1=1$. Then $\theta_1=1$ and $\tau^*=1$. Claim 1 gives $\theta_0=1$, so $t=2$. By \eqref{eq:1} $\gamma_0=5$, which gives $\rho(X_0)=10-K_0^2=10+K_0\cdot E_0-\zeta=13$. We may assume $L_0\cdot E_0=0$ and $L_1\cdot E_1=1$. Recall that the contraction of $L_0$ is denoted by $\sigma_1$. Put $\ti L_1=(\sigma^{-1})_*L_1$ and suppose $L_0\cdot \ti L_1\neq 0$. Then $L_0+\ti L_1=[1,2]$. Since $L_0\cdot T_0=2$, we have $1=L_1\cdot T_1=\ti L_1\cdot ((\sigma^{-1})_*T_1+2L_0)$, so $\ti L_1\cdot (\sigma^{-1})_*T_1=-1$. It follows that $L_1\subset T_1$ and that $\ti L_1$ is a $(-2)$-tip of $T_0$, so $L_0\cdot \Delta_0>0$. Then $L_0\cdot (2K_0+D_0^\flat)<L_0\cdot (2K_0+D_0)=0$, so $2K_0+D_0^\flat$ is not nef, in contradiction to \ref{prop:n=0}. Therefore, $L_0\cdot \ti L_1=0$. In particular, $\ti L_1$ is a $(-1)$-curve, hence is not a component of $D_0$. Let $L_i'$, $i=1,2$ be the proper transform of $L_i$ on $X$. We have $L_i'\cdot D=2$, so since $X\setminus D$ contains no affine lines, $L_i'$ meets $D$ in two different points. Let $(Y,D_Y)$ be the image of $(X,D)$ after the snc-minimalization of $D+L_1'+L_2'$. The minimalization morphism is inner, because $L_i'$'s are not contained in the twigs of $D+L_1'+L_2'$. We compute $(K_Y+D_Y)^2=(K+D)^2+2=K\cdot (K+D)=p_2(\PP^2,\bar E)=3$ and $\chi(Y\setminus D_Y)=\chi(\PP^2\setminus \bar E)=1$. By \ref{lem:BMY} $\ind(D_Y)=0$, which implies that $D_Y$, and hence $D+L_1'+L_2'$, has no tips. In particular, $D$ has at most four tips. Also, $s=0$, so $\tau=1+c$. We claim that there is no $(-2)$-curve in $D_0$ which meets $L_0$ or $\wt L_1$ once. Indeed, if $V\subset D_0$ is a $(-2)$-curve meeting $L_0$ once then, because $s=0$, we have $\theta_1\geq 2$ or $\theta_0\geq 2$, which is in contradiction with Claim 1 or with with the inequality $\Theta_1\leq 1$ respectively.

Suppose $D$ has $4$ tips. Then each of them meets some $L_i'$. It follows that $c=1$, as otherwise $L_1'$ does not meet $E$, which is impossible, because $L_1$ meets $E_0$. We obtain $\tau_1=\tau=2$. We infer also that $Q_1$ is a fork with no $(-2)$-tips and such that $K\cdot Q_1=3$ (see \ref{lem:case_n=0}(iii)). By \ref{lem:type} $Q_1$ is of type $(r_1,r_2)$ for some $r_1+r_2=4$, $r_1,r_2\geq 1$. But since it has no $(-2)$-tip, we have $r_1,r_2\geq 2$, so $r_1=r_2=2$. Since $r_2=2$, by \ref{lem:Q_with_small_KQ}(iii) the unique $(-1)$-curve in $Q_1$ meets a $(-3)$-tip of $Q_1$. This tip meets some $L_i'$ and hence $C_1$ (which is the image of this tip on $X_0$) meets some $L_i$. By \ref{cor:fE>=4}(ii) $4\leq (C_1'+L_i)\cdot E_0\leq \tau_1+1$; a contradiction.

Therefore, $D$ has $3$ tips, so $c=1$ and $Q_1$ is a chain with $K\cdot Q_1=3$. We have $s_1=0$ and $\tau_1=2$. In particular, $\wt Q_1$ is a chain with $K_0\cdot \wt Q_1=3-\tau_1+s_1=1$, so it is as in \ref{lem:Q_with_small_KQ}(iii) for some $k\geq 0$. Since $\#D_0=\rho(X_0)=13$ we get $k=7$. But because $L_0+L_1$ does not meet any $(-2)$-curve of $D_0$, the fact that $k$ is positive implies that $D_Y+L_0'+L_1'$ has a tip; a contradiction.
\end{proof}

We obtain $t=1$ and $\gamma_0=\gamma_1=4$. We have also $K\cdot (Q_1+Q_2)=3$ by \ref{lem:case_n=0}(iii). Let $V$ be the divisor consisting of components of $D_0-E_0$ of self-intersection smaller than $(-2)$. Since $K_0\cdot \sum_{j=1}^c (\wt Q_j-C_j)=3-\tau^*\leq 2$ (see the remark after \ref{lem:case_n=0}), Claim 2 gives $V=[3]$, $V=[3]+[3]$ or $V=[4]$.

\bcl $L_0\cdot (D_0-V-\sum_{j=1}^cC_j)=0$ and $L_0\cdot V_i\leq 1$ for every $(-3)$-curve $V_i\subset V$. \ecl

\begin{proof} Suppose $L_0\cdot M\geq 2$ for some component $M\subset D_0-\sum_{j=1}^cC_j$. Since $M^2\geq -4$, the divisor $M+2L_0$ is nef, so $0\leq (M+2L)\cdot (2K_0+E_0)=2K_0\cdot M+ E_0\cdot M-4\leq 2K_0\cdot M-3$, hence $K_0\cdot M >1$, which means that $M=V=[4]$. If $L_0$ meets some component $M$ of $D_0-V-\sum_{j=1}^cC_j$ then $M^2=-2$ and by the above argument $M\cdot L_0=1$, so since $M\cdot E_0\leq 1$, we get $t>1$, which is false.
\end{proof}

Let $L_0'\subset X$ be the proper transform of $L_0$ and let $\alpha\:(X,D+L_0')\to (Y,D_Y)$ be the contraction of $L_0'$. By Claim 4 $D_Y$ is snc-minimal. We compute $(K_Y+D_Y)^2=(K+D)^2+1=K\cdot (K+D)-1=p_2(\PP^2,\bar E)-1=2$ and $\chi(Y\setminus D_Y)=\chi(\PP^2\setminus \bar E)=1$. By \ref{lem:BMY} $$\ind(D_Y)\leq 1.$$  Note also that $K_0^2=\zeta-K_0\cdot E_0=-2$ and $\#D_0=\rho(X_0)=10-K_0^2=12$.

\bcl $L_0\cdot (D_0-V)=0$, $K_0\cdot V=2$ and $\tau^*=1$. \ecl

\begin{proof} Suppose $L_0\cdot C_1>0$. Then $C_1+L_0$ is nef, so $0\leq (2K_0+E_0)\cdot (C_1+L_0)=\tau_1-4\leq \tau^*+s_1-3$. By Claim $2$ we obtain $\tau^*=\tau_1^*=2$ and $s_1=1$, hence $V=[3]$. Also, $L_0\cdot C_2=0$ if $c=2$. Then $\wt Q_1$ is a fork with maximal twigs $[3]$, $[2]$ and $[(2)_k,1]$ for some $k\geq 0$. The Claim $4$ gives $L_0\cdot (D_0-C_1)=0$, so $\ind(D_Y)\geq \frac{1}{2}+\frac{3}{4}>1$; a contradiction. Thus $L_0\cdot C_j=0$ for $j\leq c$ and we obtain  $L_0\cdot (D-V)=0$. In case $V=[3]$ we get $L_0\cdot V=L_0\cdot D=2$, which is impossible by Claim $4$. Hence $2\leq K_0\cdot V=K_0\cdot \sum_{j=1}^c (\wt Q_j-C_j)=3-\tau^*$, so by Claim 2 $\tau^*=1$. Then $V=[3]+[3]$ or $V=[4]$.
\end{proof}

\bcl $c=1$. \ecl

\begin{proof}
Suppose $c=2$. We have $K_0\cdot (\wt Q_1+\wt Q_2)=3-\tau+s=3-\tau^*-c=0$. Suppose, say, $K_0\cdot \wt Q_2<0$. Then $K_0\cdot \wt Q_2=-1$, so $\wt Q_2=[1,(2)_k]$ for some $k\geq 0$, and $K_0\cdot \wt Q_1=1$. Let $\ind_j$ be the contribution to $\ind(D_Y)$ of the twigs of $D_Y$ whose proper transform is contained in $Q_j$. By Claim 5 $L_0\cdot \wt Q_2=0$, so $\ind_2>\frac{1}{2}$ and hence $\ind_1<\frac{1}{2}$. It follows that $\tau_1^*\neq 0$, so $\tau_1^*=1$ and $\tau_2^*=0$. We get $1\geq \ind(D_Y)\geq \ind_2=\frac{1}{2}+\frac{2k+1}{2k+3}$, so $k=0$ and hence $\ind_1\leq \frac{1}{6}$. Because $K\cdot V=2$ and $\tau\leq 3$, there is no tip in $D$ whose intersection with $K$ is more than $3$, hence there is no tip with self-intersection smaller than $-5$. By \ref{lem:ind>=} we see that $D+L_0'$ has no tips contained in $Q_1$. Therefore, $Q_1$ is a chain and $L_0'$ meets its tips. Then $\wt Q_1$ is a chain and $L_0$ meets its tips. We infer that $s_1=0$, that $V$ consists of two $(-3)$-curves and that they are tips of $\wt Q_1$. Since $K_0\cdot \wt Q_1=1$, by \ref{lem:Q_with_small_KQ}(iii) $\wt Q_1=[3,2,1,3]$. Then $\#D_0=6$; a contradiction. Thus $K_0\cdot \wt Q_j\geq 0$ for $j\leq c$.

We obtain $K_0\cdot \wt Q_1=K_0\cdot \wt Q_2=0$. Then $V=[3]+[3]$, where each $(-3)$-curve is contained in a different $\wt Q_j$. Since $\tau^*=1$, we may assume $\tau_2^*=0$, so $C_2$ is a tip of $\wt Q_2$. Then $\wt Q_2$ is a fork with $[2]$ and $[(2)_k,3]$ as two maximal twigs. Since $L_0$ does not meet $(-2)$-curves of $D_0$, $D_0+L_0$ has a $(-2)$-tip contained in $\wt Q_2$ and hence $D+L_0'$ has two $(-2)$-tips contained in $Q_2$, so $\ind_2\geq 1$. We get $\ind_1=0$, so $D+L_0'$ has no tips in $Q_1$. But by Claim 5 $L_0'$ meets $Q_1$ only in the proper transform of the $(-3)$-curve from $V$, so the latter is impossible; a contradiction.
\end{proof}

\bcl $s_1=0$ and $\tau_1=2$. \ecl

\begin{proof} Since $c=1$, we have $\tau_1=\tau^*+1+s_1=2+s_1\leq 3$. Suppose $s_1=1$. Then $C_1$ is a tip of $\wt Q_1$. Since $K_0\cdot \wt Q_1=K_0\cdot Q_1-\tau_1^*-1=1$, we see that $\wt Q_1$ is not a chain. The contributions to $\ind(D_Y)$ of $E$ and of the twig contracted by $\psi_0$ are $\frac{1}{\gamma_0+\tau_1}=\frac{1}{7}$ and $\frac{2}{3}$ respectively, so the contribution of the remaining twigs of $D+L_0'$ is at most $\frac{4}{21}$, which is smaller than $\frac{1}{5}$, hence by \ref{lem:ind>=} their tips have self-intersections at most $(-6)$. But if such a tip exists then, since it is not touched by $\psi_0$, $D_0$ would contain a component with self-intersection at most $(-6)$, which is false. Thus $D+L_0'$ has exactly two maximal twigs and hence $D$ has at most four and $E$ is one of them. Because $\wt Q_1$ is not a chain, $\wt Q_1$ has exactly three maximal twigs. By Claim 5 their tips are $C_1$ and two $(-3)$-curves  (components of $V$) meeting $L_0$. Because $K_0\cdot \wt Q_1=1$, the first branch of $\wt Q_1$ is $[3,2,2,3]$. Let $V_1$ be the $(-3)$-curve meeting the branching component of $\wt Q_1$ (see Fig.\ \ref{fig:3}).
\begin{figure}[h]\centering\includegraphics[scale=0.5]{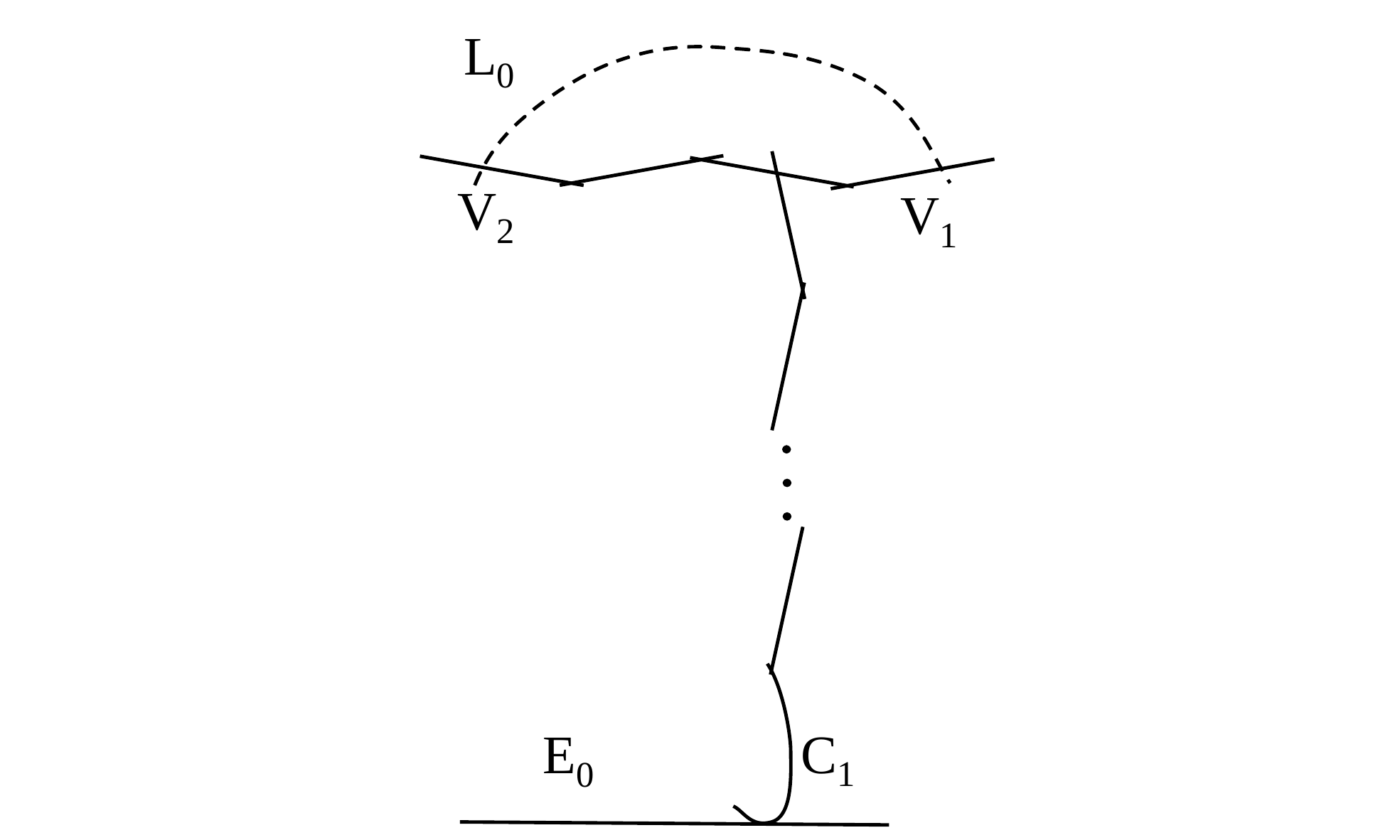}\caption{The divisor $D_0+L_0$ on $X_0$. Proof of Lemma \ref{lem:zeta>=1_for_p2=3}, Claim 7.}  \label{fig:3}\end{figure}
The contraction of $D_0-E_0+L_0-V_1$ maps $X_0$ onto $\PP^2$ and $E_0$ onto a unicuspidal curve with a cusp of multiplicity $\tau_1=3$; a contradiction with \ref{lem:min(mu)>=4}.
\end{proof}

By \ref{lem:type} $Q_1$ is of type $r=(r_1,\ldots,r_m)$ with $r_1+\ldots+r_m=4$. Because $E$ is a tip of $D_Y$, the inequality $\ind(D_Y)\leq 1$ implies that $D_Y$ has at most one $(-2)$-tip. By Claim 5 the number of $(-2)$-tips in $D$ and $D_Y$ is the same, so $D$ has at most one $(-2)$-tip, hence at most one $r_i$ equals $1$. Also, $r_m>1$, because $s_1=0$. Therefore, $r=(4)$, $(1,3)$ or $(2,2)$.

Suppose $r=(2,2)$. Since $\tau_1=2$, the second branch of $Q_1$ is $[(2)_{k_2},3,2,1,3]$, $k_2\geq -1$, so the second branch of $\wt Q_1$ is $[(2)_{k_2},2,1]$. Then $V$ is contained in the first branch. If this branch is $[(2)_{k_1},4,x,2,2]$, $x=2,3$, $k_1\geq 0$ then $V=[4]$ and we compute $\ind(D_Y)=(1-\frac{1}{k_1+1})+\frac{2}{3}+\frac{1}{3}+\frac{1}{\gamma}>1$, which is impossible. Thus the first branch of $\wt Q_1$ is $[(2)_{k_1},3,2,x,3]$, $x=2,3$, $k_1\geq 0$. The curve $L_0$ meats each of the $(-3)$-curves in this branch once. Let $V_1\subset V$ be the $(-3)$-tip meeting the branching component of $\wt Q_1$ (see Fig.\ \ref{fig:4}).
\begin{figure}[h]\centering\includegraphics[scale=0.5]{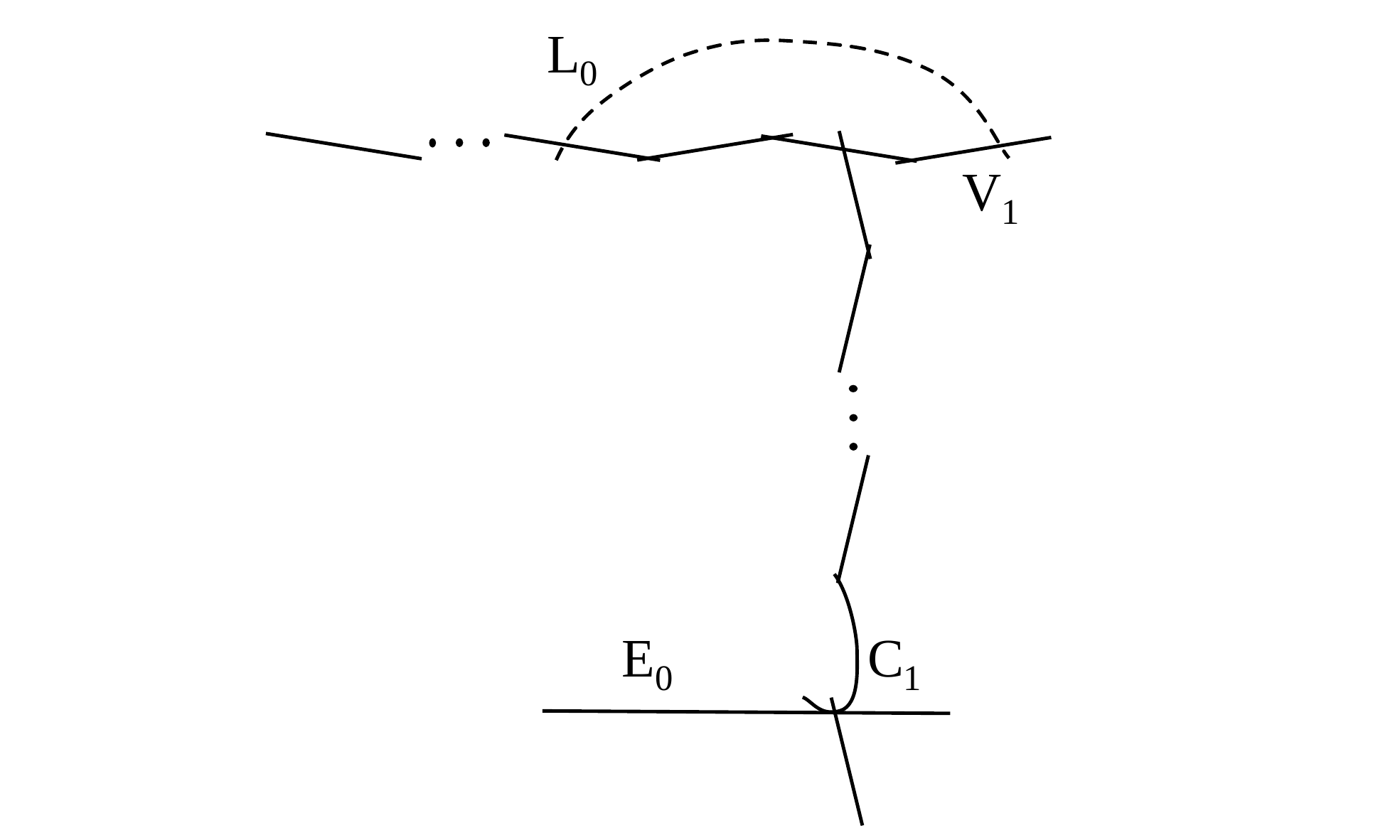}\caption{The divisor $D_0+L_0$ on $X_0$. Proof of Lemma \ref{lem:zeta>=1_for_p2=3}, Claim 7.}  \label{fig:4}\end{figure}
The contraction of $D_0-E_0-V_1+L_0$ maps $X_0$ onto $\PP^2$ and $E_0$ onto a unicuspidal curve with a cusp of multiplicity $3$, which contradicts \ref{lem:min(mu)>=4}.

Suppose $r=(1,3)$. Since $\tau_1=2$, the second branch of $Q_1$ is as in \ref{lem:Q_with_small_KQ} (iv.2) or (iv.3). Because $\gamma=\gamma_0+\tau_1=6$, in the first case $\ind(D_Y)\geq \frac{1}{2}+\frac{3}{5}+\frac{1}{6}$ and in the second $\ind(D_Y)\geq \frac{1}{2}+\frac{2}{5}+\frac{1}{6}$. In both cases $\ind(D_Y)>1$; a contradiction.

Thus $Q_1$ is a chain of type $(4)$. Since $K_0\cdot\wt Q_1=1$, $\wt Q_1$ is as in \ref{lem:Q_with_small_KQ}(iii.1) or (iii.2). Since $\#D_0=12$, we get $k=7$. Then $1\geq \ind(D_Y)>(1-\frac{1}{k+1}) +\frac{1}{\gamma}= 1-\frac{1}{8}+ \frac{1}{6}$; a contradiction.
\end{proof}

\bprop\label{prop:p2=4_for_n=0} $p_2(\PP^2,\bar E)=4$.  \eprop

\setcounter{claim}{0}
\begin{proof}By \ref{thm:CN1}(b) $p_2(\PP^2,\bar E)\in\{3,4\}$. Suppose $p_2(\PP^2,\bar E)=3$. By \ref{lem:case_n=0}(ii) $\zeta\in\{-1,0,1\}$. By \ref{lem:zeta>=0_for_p2=3} and \ref{lem:zeta>=1_for_p2=3} $\zeta=1$. Then \ref{lem:case_n=0}(iii) gives $K\cdot (D-E)=2$. We have $\rho(X)=10-K^2=8-\zeta+\gamma=7+\gamma$, hence $$\#D=7+\gamma.$$

Suppose $c=2$. Let $r=(r_1,\ldots,r_{m_1})$ and $v=(v_1,\ldots,v_{m_2})$ be the types of $Q_1$ and $Q_2$ respectively. We may assume $K\cdot Q_1\leq K\cdot Q_2$. Since $K\cdot Q_1+K\cdot Q_2=2$, we have $(\ds \sum_{i=1}^{m_1} r_i,\sum_{i=1}^{m_2} v_i)\in\{(1,3),(2,2)\}$. If $(r,v)=((1),(1,1,1))$ or $(r,v)=((1),(2,1))$ or $(r,v)=((1,1),(1,1))$ or $(r,v)=((1,1),(2))$ then we check easily using the explicit description in \ref{lem:Q_with_small_KQ} that $\ind(D)>2$, which contradicts \ref{lem:case_n=0}(iv). In case $(r,v)=((1),(1,2))$ the inequality $\ind(D)\leq 2$ holds only if $Q_1=[3,1,2]$, the first branch of $Q_2$ is $[3,x,2]$ and the second is $[(2)_{k-1},3,2,1,3]$ for some $k\geq 0$, where as usually by $[(2)_{-1},3,2,1,3]$ we mean $[2,1,3]$ (note that $x=2$, unless $k=0$). The characteristic pairs of the two cusps of $\bar E$ are $\binom{3}{2}$ and $\binom{9}{6},\binom{3}{3}_{k},\binom{3}{2}$, hence in the notation of \ref{cor:I and II} $I(q_1)=6$, $M(q_1)=4$, $I(q_2)=9k+60$, $M(q_2)=3k+16$. The equations \ref{cor:I and II}(i) and (ii) taken modulo $3$ give $\gamma\equiv 1$ and $\gamma+d^2\equiv 0$, hence $d^2\equiv 2 \mod 3$, which is a contradiction.

We are left with the cases when both $Q_1$ and $Q_2$ are chains. Then $r=(r_1)$, $v=(v_1)$ and $r_1+v_1=4$. For $i=1,2$ let $\ind_j$ be the contribution to $\ind(D)$ of the maximal twigs of $D$ contained in $Q_j$.

Consider the case $r_1=1, v_1=3$. Then $Q_1=[(2)_{k_1},3,1,2]$ for some $k_1\geq 0$, so $\ind_1\geq \frac{5}{6}$ and $\ind_2\leq 2-\ind_1=\frac{7}{6}$. In case $Q_2$ is as in \ref{lem:Q_with_small_KQ} (iv.1) or (iv.2) or (iv.3) with $k\geq 0$ we have $\ind_2\geq \frac{31}{40}$, so the inequality $\ind_2\leq \frac{7}{6}$ gives $k\leq 2$. Then $k_1+3=\#Q_1=\#D-1-\#Q_2=\gamma+1-k\geq \gamma-1\geq \gamma_0+3\geq 7$, so $\ind_1=\frac{1}{2}+\frac{2k_1+1}{2k_1+3}\geq \frac{29}{22}$ and consequently $\ind(D)\geq \frac{29}{22}+\frac{31}{40}>2$; a contradiction. Thus $Q_2=[(2)_{k_2},3,2,2,1,4]$, $k_2\geq 0$ and $Q_1=[(2)_{k_1},3,1,2]$, $k_1\geq 0$. Since $\#D=7+\gamma$, we have $k_1+k_2=\gamma-2=\gamma_0+3\geq 7$. The inequality $\ind(D)\leq 2$ gives $\frac{3}{8}\leq \frac{1}{2k_1+3}+\frac{2}{4k_2+7}$, which under the assumption $k_1+k_2\geq 7$ fails for $k_1\neq 0$. Thus $k_1=0$ and hence $\gamma=k_2+2$. The characteristic pairs of the two cusps of $\bar E$ are $\binom{3}{2}$ and $\binom{4k_2+7}{4}$, hence in the notation of \ref{cor:I and II} $M(q_1)=4$ and $M(q_2)=4k_2+10$. The equation \ref{cor:I and II}(i) gives $\gamma+3d=4k_2+16$, hence $3(d-k_2)=14$; a contradiction.

Consider the case $r_1=v_1=2$. Suppose $Q_1$ is as in \ref{lem:Q_with_small_KQ} (iii.1), i.e.\ $Q_1=[(2)_{k_1},4,1,2,2]$ for some $k_1\geq 0$. Then $\ind_1=\frac{2}{3}+\frac{3k_1+1}{3k_1+4}\geq \frac{11}{12}$. We have also $Q_2=[(2)_{k_2},4,1,2,2]$ or $[(2)_{k_2},3,2,1,3]$ for some $k_2\geq 0$, so $k_1+k_2=\#D-9=\gamma-2=\gamma_0+\tau-2\geq \tau+2\geq 7$. In the first case the inequality $\ind(D)\leq 2$ is equivalent to $\frac{1}{3k_1+4}+\frac{1}{3k_2+4}\geq \frac{4}{9}$, which is false for $(k_1,k_2)\neq (0,0)$. In the second case it is equivalent to $\frac{1}{3k_1+4}+\frac{1}{3k_2+5}\geq \frac{1}{3}$, which is also inconsistent with the inequality $k_1+k_2\geq 7$. Thus we may assume both $Q_j$'s are as in (iii.2), i.e.\ $Q_1=[(2)_{k_1},3,2,1,3]$ and $Q_2=[(2)_{k_2},3,2,1,3]$ for some $k_2\geq k_1\geq 0$, such that $k_1+k_2=\gamma-2\geq 6$. Then the inequality $\ind(D)\leq 2$ gives $\frac{2}{9}\leq \frac{1}{3k_1+5}+\frac{1}{3k_2+5}$, which for $k_1\neq 0$ is inconsistent with the inequality $k_1+k_2\geq 6$. Thus $k_1=0$ and $\gamma=k_2+2$. The characteristic pairs of the two cusps of $\bar E$ are $\binom{5}{3}$ and $\binom{3k_2+5}{3}$, hence in the notation of \ref{cor:I and II} $M(q_1)=7$, $I(q_1)=15$, $M(q_2)=3k_2+7$ and $I(q_2)=9k_2+15$. The equation \ref{cor:I and II}(iii) gives $d^2-3d+2=6k_2+16$, hence $d^2\equiv 2\mod 3$; a contradiction.

Thus, we proved that $c=1$. Let $r=(r_1,\ldots,r_m)$ be the type of $Q_1$. We have $\sum_{i=1}^m r_i=K\cdot Q_1+1=3$. Let $\ind_i$ be the contribution to $\ind(D)$ coming from the maximal twigs of $D$ contained in the $i$-th branch of $Q_1$. By \ref{lem:case_n=0}(iv) $$\ind_1+\ldots+\ind_m\leq 2-\frac{1}{\gamma}.$$

Consider the case $r=(1,1,1)$. The second and third branch of $Q_1$ are respectively $[2,x',3,(2)_{k_2-1}]$ and $[2,1,3,(2)_{k_3-1}]$ for some $k_2,k_3\geq 0$. We get $\ind_1\leq 2-\ind_2-\ind_3=1$, hence the first branch is $[3,x,2]$.  We have $k_2+k_3=\#D-8=\gamma-1=\gamma_0+1\geq 5$. The characteristic pairs of $\wt Q_1$ are $\binom{6}{4}, \binom{2}{2}_{k_2}, \binom{2}{1}, \binom{1}{1}_{k_3}$. We apply \ref{cor:I and II}(i) and (ii) to $X_0$. We compute $\rho_1=2$, $M(q_1)=10+2k_2+k_3$ and $I(q_1)=26+4k_2+k_3$, hence $3d=23+3k_2+k_3$ and $d^2=105+15k_2+3k_3$. The system of these two equations has only one solution in natural numbers $(d,k_2,k_3)=(12,0,13)$, for which $\gamma_0=k_2+k_3-1=12$, in contradiction to \ref{lem:case_n=0}(i).

Consider the case $r=(3)$. Then $Q_1$ is a chain as in \ref{lem:Q_with_small_KQ}(iv.1), (iv.2), (iv.3) or (iv.4), which gives $k=\#D-6=\gamma+1$. The characteristic pair of $Q_1$ is respectively $\binom{4k+5}{4}$, $\binom{5k+7}{5}$, $\binom{5k+8}{5}$ and $\binom{4k+7}{4}$. We check easily that in each case the system of equations \ref{cor:I and II}(i),(ii) has no integral solution.

Consider the case $r=(2,1)$. Then the second branch of $\wt Q_1$ is $[(2)_{k_2}]$ for some $k_2\geq 0$ and the first branch is either $[(2)_{k_1},4,x,2,2]$ or $[(2)_{k_1},3,2,x,3]$ for some $k_1\geq 0$, hence $k_1+k_2=\#D_0-5=\gamma_0+2$. In the first case the characteristic pairs are $\binom{3k_1+4}{3},\binom{1}{1}_{k_2}$, hence $\rho_1=2$, $M(q_1)=3k_1+k_2+6$ and $I(q_1)=9k_1+k_2+12$. Then the equations read as $3d=5k_1+k_2+16$ and $d^2=35k_1+3k_2+50$. There is a unique solution in natural numbers $(d,k_1,k_2)=(11,1,12)$, for which $\gamma_0=11$, in contradiction to \ref{lem:case_n=0}(i). In the second case the characteristic pairs are $\binom{3k_1+5}{3},\binom{1}{1}_{k_2}$, hence $\rho_1=2$, $M(q_1)=3k_1+k_2+7$ and $I(q_1)=9k_1+k_2+15$. Then the equations read as $3d=5k_1+k_2+18$ and $d^2=35k_1+3k_2+62$. There are no integral solutions.

Consider the case $r=(1,2)$. Then the first branch of $Q_1$ is $[(2)_{k_1},3,x,2]$ for some $k_1\geq 0$ and the second is either $[3,1,2,3,(2)_{k_2-1}]$ or $[2,2,1,4,(2)_{k_2-1}]$ for some $k_2\geq 0$, hence $k_1+k_2=\#D-7=\gamma$. In the first case the characteristic pairs are $\binom{3(2k_1+3)}{3\cdot 2},\binom{3}{3}_{k_2},\binom{3}{2}$, hence $M(q_1)=6k_1+3k_2+16$ and $I(q_1)=36k_1+9k_2+60$. Then the equations \ref{cor:I and II} read as $3d=5k_1+2k_2+18$ and $d^2=35k_1+8k_2+60$. There are no natural solutions. In the second case the characteristic pairs are $\binom{3(2k_1+3)}{3\cdot 2},\binom{3}{3}_{k_2},\binom{3}{1}$, hence $M(q_1)=6k_1+3k_2+15$ and $I(q_1)=36k_1+9k_2+57$. Then the equations read as $3d=5k_1+2k_2+17$ and $d^2=35k_1+8k_2+57$. There are two solutions in natural numbers $(d,k_1,k_2)=(11,0,8)$ and $(16,5,3)$. But we check that for the second solution $\ind(D)>2$.

We are therefore left with the case of a unicuspidal rational curve of degree $11$ with characteristic pairs $\binom{9}{6},\binom{3}{3}_{8},\binom{3}{1}$. We have $\gamma=k_1+k_2=8$, hence $\gamma_0=5$. The divisor $\wt Q_1$ is a fork with a branching $(-2)$-curve $B$ and three maximal twigs $V_1=[1,(2)_7]$, $V_2=[2]$ and $V_3=[3]$. We will show that $t=1$ (see \ref{lem:(X,E)}) and \beq 2K_0+E_0\equiv L_0.\label{eq:deg=11}\eeq By \eqref{eq:2} $(3-\Theta_0)\gamma_t\geq 2(\Theta_1+3)\geq 6$. Then $\Theta_0\leq 3-\frac{6}{\gamma_t}\leq 3-\frac{6}{5}<2$. Since $\Theta_0\geq 2\theta_0$, we get $\theta_0=\Theta_0=0$. By \eqref{eq:1} $\zeta_t=\zeta=1$, so by \eqref{eq:3} $\gamma_t=4$ and $((K_t+E_t)^+)^2=0$. By \eqref{eq:1} $\theta_1=\gamma_0-\gamma_t=1$, hence $\Theta_1\geq 1$.
Therefore, $L_0\cdot E_0=1$ and $L_0\cdot (D_0-E_0)=\Theta_1\in \{1,2,3\}$. Suppose $L_0$ meets some $(-2)$-curve $M$ in $D_0$. Since $\theta_1=1$, $L_0\cdot M\geq 2$, so $M+2L_0$ is nef and we get $0\leq (M+2L_0)\cdot (2K_0+E_0)=2L_0\cdot (2K_0+E_0)=-2$; a contradiction. Thus $L_0$ meets no $(-2)$-curve in $D_0$. Similarly, if $L_0$ meets $C_1$ more than once then $C_1+2L_0$ is nef and we get $0\leq (C_1+2L_0)\cdot (2K_0+E_0)=-4+C_1\cdot E_0=-1$; a contradiction.
\begin{figure}[h]\centering\includegraphics[scale=0.5]{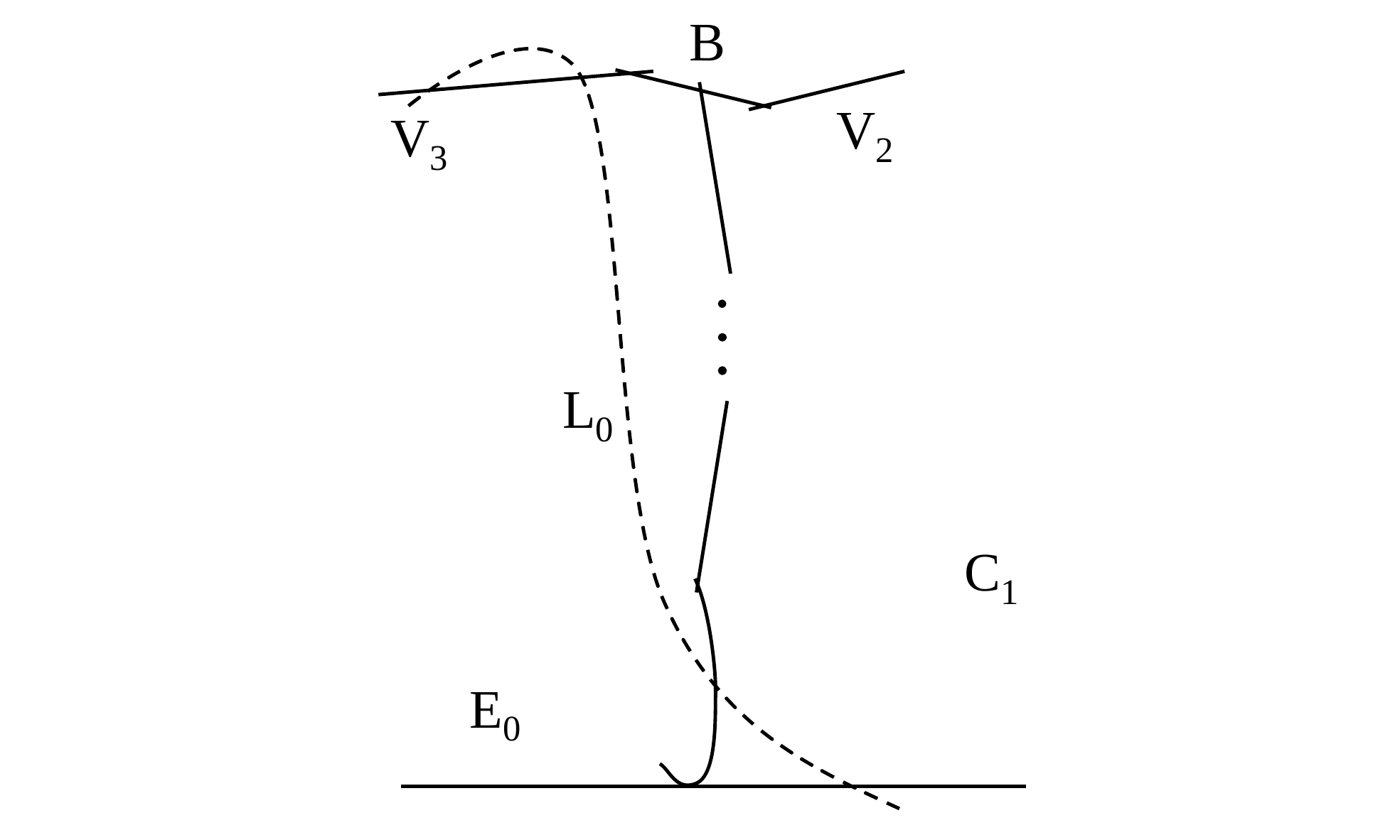}\caption{The divisor $D_0+L_0$ on $X_0$. Proof of Propositon \ref{prop:p2=4_for_n=0}.} \label{fig:5}\end{figure}
Finally, if $L_0\cdot V_3\geq 3$ then $V_3+3L$ is nef and we get $0\leq (V+3L_0)\cdot (2K_0+E_0)=2-3$; a contradiction. Thus, we have $\Theta_1=L_0\cdot (D_0-E_0)=L_0\cdot (V_3+C_1)\leq 3$. We compute $\pi_0(L_0)^2=-1+10L_0\cdot C_1+(L_0\cdot  V_3+2L_0\cdot C_1)^2$. The latter number is a nonzero square, so $L_0\cdot C_1=1$ and $L_0\cdot V_3=2$ (see Fig.\ \ref{fig:4}). We check now that all components of $\wt Q_1$ intersect $2K_0+E_0-L_0$ trivially, so since they generate $\NS_{\Q}(X_0)$, the proof of \eqref{eq:deg=11} is completed.

We now look back at $D$. Let $U$ be the unique $(-1)$-curve in $D$. We have $(\psi_0^*V_1)_{red}-U=V_1'+V_1''$, where $V_1'=[2,2]$ and $V_1''=[(2)_7,4]$. Clearly, the divisor $D-U$ is snc-minimal. We claim that the pair $(X,D-U)$ is almost minimal, i.e.\ that there is no $(-1)$-curve $\ell\subset X$ for which one of \cite[6.21]{Fujita-noncomplete_surfaces} holds. Suppose otherwise. Then $\ell\not\subset D-U$ and $\ell$ meets each connected component of $D-U$ at most once and meets at most two connected components in total. In particular, $\ell\neq U$. The curve $\ell$ does not meet $U+V_1'$, because otherwise we easily find a nef subdivisor of $\ell+V_1'+U$ whose intersection with the effective divisor $2K+E$ is at most $-3+(\ell+U)\cdot E\leq-1$, which is impossible. Therefore, the image of $\ell$ on $X_0$, which we denote by the same letter, is a $(-1)$-curve. By \eqref{eq:deg=11} $1\geq \ell\cdot E_0=\ell\cdot L_0+2$, so $\ell=L_0$. But $L_0\cdot V_3>1$; a contradiction.

Thus for the above unicuspidal curve $\bar E\subset \PP^2$ of degree $11$ the pair $(X,D-U)$ is almost minimal. Note that $V_2+B+V_3+V_1''$ does not contract to a quotient singularity. If $\kappa(X\setminus (D-U))\geq 0$ then the BMY inequality (we use here the version for non-connected boundaries stated in \cite[2.5(ii)]{Palka-exceptional}) gives $(K+D-U)^2+\ind(D-U)\leq 3(\frac{1}{3}+\frac{1}{8})=\frac{11}{8}$. But $(K+D-U)^2=(K+D)^2+U^2-2U\cdot (K+D)=(K+D)^2-3=-2$, so $-2+\ind(D-U)=-2+(\frac{1}{2}+\frac{1}{3}+\frac{8}{25})+\frac{6}{3}+\frac{4}{8}=\frac{124}{75}>\frac{11}{8}$, so we infer that $\kappa(X\setminus (D-U))=-\8$. Since $D-U$ has three connected components, not all contractible to quotient singularities, by structure theorems for almost minimal pairs \cite[2.3.15, 2.5.1.2]{Miyan-OpenSurf} $X\setminus (D-U)$ admits a $\C^1$- or a $\C^*$-fibration. Then \ref{cor:fE>=4}(i) fails; a contradiction.
\end{proof}

By Proposition \ref{prop:p2=4_for_n=0} to get a final contradiction, and hence to prove Theorem \ref{thm:CN} it remains to rule out the case $p_2(\PP^2,\bar E)=4$. By \ref{thm:CN1}(b) $\bar E$ is unicuspidal, i.e.\ $c=1$. Recall that $\zeta=K\cdot (K+E)=K_0\cdot (K_0+E_0)$ and that $\gamma_0=-E_0^2\geq 4$. Let $b\geq 0$ be the number of branching components of $Q_1$. Then $Q_1$ has $b+1$ branches. For $i\leq b+1$ let $\ind_i$ be the contribution of the maximal twigs of $D$ contained in the $i$-th branch of $Q_1$ to $\ind(D)$. Denote by $r=(r_1,\ldots,r_{b+1})$ the type of $Q_1$ and by $U$ the unique $(-1)$-curve of $Q_1$.

\bcor\label{lem:case_n=0,p2=4} If $p_2(\PP^2,\bar E)=4$ then the following hold:\benum[(i)]
    \item $\gamma_0+\tau^*\leq 8+2\zeta$,
    \item $\zeta\in \{-2,-1,0,1,2\}$,
    \item $r_1+r_2+\ldots+r_{b+1}=K\cdot Q_1+1=5-\zeta$,
    \item $\ind_2+\ldots+\ind_{b+1}+\frac{1}{\gamma}\leq 1-\ind_1<\frac{1}{2}$.
\eenum
\ecor

\begin{proof} For (i), (ii) and (iii) see \ref{lem:case_n=0} and \ref{lem:type}.  For (iv) see \ref{lem:case_n=0}(iv) and \ref{rmk:ind>half}.
\end{proof}

We have now $D-E=Q_1$, $\tau^*=\tau_1^*$ and $s=s_1$.

\setcounter{claim}{0}
\begin{proof}[Proof of Theorem \ref{thm:CN} (case $p_2(\PP^2,\bar E)=4$)]

\bcl $\tau^*\geq 1$ and $\zeta\geq -1$. \ecl

\begin{proof}Suppose $\tau^*=0$. Then the last branch of $Q_1$ contains a $(-2)$-tip, so $\ind_{b+1}\geq \frac{1}{2}$. By \ref{lem:case_n=0,p2=4}(iv) $Q_1$ is a chain. Then $\wt Q_1$ is a chain with a $(-1)$-tip, so $\wt Q_1=[1,(2)_k]$ for some $k\geq 0$, hence $\wt Q_1=[(2)_k,3,1,2]$. But $K\cdot Q_1=4-\zeta\geq 2$; a contradiction. Thus $\tau^*\geq 1$. By \ref{lem:case_n=0,p2=4}(i) $2\zeta\geq \tau^*-4\geq -3$, so $\zeta\geq -1$.
\end{proof}

For $i=2,\ldots,b+1$ let $T_i$ be the tip of $Q_1$ contained in its $i$-th branch and let $d_i=-T_i^2$.

\bcl $d_i\geq 4$ for $i\geq 2$. \ecl

\begin{proof} We may assume $b\geq 1$. By \ref{lem:ind>=}(ii) $\ind_i\geq \frac{1}{d_i}$ for $i\geq 2$. Suppose, say, $d_2\leq 3$. Since by \ref{lem:case_n=0,p2=4}(iv) $\ind_2<\frac{1}{2}$, we get $d_2=3$. Then $r_2\geq 2$ and $\ind_2\geq \frac{1}{3}$. Suppose $b\geq 2$. Then \ref{lem:case_n=0,p2=4}(iv) gives $\frac{1}{d_3}+\ldots+\frac{1}{d_{b+1}}+\frac{1}{\gamma}<\frac{1}{2}-\ind_2\leq \frac{1}{6}$, so $d_i\geq 7$ and hence $r_i\geq 5$ for $i\geq 3$. Then \ref{lem:case_n=0,p2=4}(iii) gives $0\leq r_1+\zeta\leq 5-5(b-1)-r_2<5(2-b)\leq 0$; a contradiction. Thus $b=1$. We have now $\ind_1<\frac{2}{3}$. By \ref{lem:ind>=}(ii) and \ref{lem:Q_with_small_KQ}(i)-(iv) $r_1\geq 4$. Then $r_2=5-\zeta-r_1\leq 6-r_1\leq 2$, so $r_2=2$ and hence $r_1=4$ and $\zeta=-1$. By \ref{lem:Q_with_small_KQ}(iii) the second branch of $Q_1$ is $[3,1,2,3,(2)_k]$ for some $k\geq -1$, so $\tau=2$ and $s=0$. We have $\gamma-1-s=\gamma_0+\tau^*\leq 8+2\zeta=6$, so $\gamma\leq 7$. Then $\frac{1}{2}<\ind_1\leq \frac{2}{3}-\frac{1}{\gamma}$, so $\gamma=7$ and $\ind_1\leq\frac{11}{21}$. By \ref{lem:Q_with_small_KQ}(v) the first branch of $Q_1$ contains a $(-2)$-tip and a $(\geq -6)$-tip or a $(-3)$-tip and a $(\geq -5)$-tip. But then $\ind_1\geq \frac{1}{2}+\frac{1}{6}$ and $\ind_1\geq \frac{1}{3}+\frac{1}{5}$ respectively, which is in both cases more than $\frac{11}{21}$; a contradiction.
\end{proof}

\bcl $Q_1$ has at most one branching component. \ecl

\begin{proof} Suppose $b\geq 2$. We have $\sum_2^{b+1}\frac{1}{d_i}\leq \sum_2^{b+1}\ind_i\leq 1-\ind_1<\frac{1}{2}$. Since $d_i\geq 4$ for all $i\geq 2$, we see that at least one of $d_i$'s is bigger than $4$. Since $r_i\geq d_i-2$, \ref{lem:case_n=0,p2=4}(iii) gives $r_1+\zeta\leq 5-\sum_2^{b+1}r_i\leq 5-\sum_2^{b+1}(d_i-2)$. Because $\zeta\geq -1$, we get $0\leq r_1+\zeta\leq 5-2b$, so $b=2$. Then the inequality gives $r_1+\zeta+d_1+d_2\leq 9$, so $\{d_1,d_2\}=\{4,5\}$ and $r_1=1$. Then $\ind_1+\ind_2+\ind_3\geq \frac{5}{6}+\frac{1}{4}+\frac{1}{5}>1$; a contradiction.
\end{proof}

\bcl $L\cdot D_0\geq 3$ for every $(-1)$-curve $L\not \subset D_0$. \ecl

\begin{proof} Suppose $L\cdot D_0\leq 2$. Then $L$ meets $D_0$ in exactly two points, because otherwise $X_0\setminus D_0$ contains a line, in contradiction to \ref{prop:fundamentals}(iii). It follows that the proper transform $L'$ of $L$ on $X$ is a $(-1)$-curve and the snc minimalization of $D+L'$, which we denote by $(X,D+L')\to (Y,D_Y)$, is inner. We compute $(K_Y+D_Y)^2=(K+D+L')^2=(K+D)^2+1=p_2(\PP^2,\bar E)-1=3$ and $\chi(Y\setminus D_Y)=\chi(X\setminus D)=1$. The BMY inequality \ref{lem:BMY} gives $(K_Y+D_Y)^2+\ind(D_Y)\leq 3\chi(Y\setminus D_Y)$, i.e.\ $\ind(D_Y)=0$. Then $D_Y$ has no tips, hence $D$ has at most two tips; a contradiction.
\end{proof}

\bcl $\zeta\geq 0$. \ecl

\begin{proof} By Claim 1 we have $\zeta\geq -1$. Suppose $\zeta=-1$. By \ref{lem:(X,E)} $\theta_0=\zeta_t+1\geq 3-\frac{4}{\gamma_t}\geq 2$ and $6-\Theta_0\geq \frac{2}{\gamma_t}(\Theta_1+\tau^*+2)>0,$ so $\theta_0\geq 2$ and $\Theta_0\leq 5$. We may assume $L_0\cdot E_0=0$, hence $L_0\not\subset D_0$. By Claim 4 $L_0\cdot (D_0-E_0)\geq 3$, so \ref{lem:(X,E)} gives $\Theta_0\geq 2\theta_0+1$. It follows that $\Theta_0=5$ and $\theta_0=2$. Then $\zeta_t=1$ and $\gamma_t=4$, so $\Theta_1+\tau^*+2\leq 2$. But then $\tau^*=0$, in contradiction to Claim 1.
\end{proof}

\bcl $Q_1$ is a chain. \ecl

\begin{proof} By Claim 3 the divisor $Q_1$ has at most one branching component. Suppose it has one.  Then it is of type $(r_1,r_2)$ with $r_1+r_2=5-\zeta\leq 5$. Since $\ind_2<\frac{1}{2}$, the second branch of $Q_1$ does not contain a $(-2)$-tip of $Q_1$, so $r_2\geq 2$. Suppose it contains a $(-3)$-tip. Then $\ind_1<1-\ind_2\leq \frac{2}{3}$. But using \ref{lem:Q_with_small_KQ} we check easily that for $r_1\leq 5-r_2\leq 3$ the inequality $\ind_1<\frac{2}{3}$ fails; a contradiction. Thus the tip of the second branch of $Q_1$ has self-intersection at most $(-4)$, hence $r_2\geq 3$. If $r_2=4$ then $r_1=1$, so $\ind_1\geq \frac{5}{6}$ and $\ind_2\geq \frac{1}{5}$ (the second branch contains no tip with self-intersection smaller than $(-5)$), hence $\ind_1+\ind_2>1$, which is impossible. Therefore, $r_2=3$ and the second branch of $Q_1$ is as in \ref{lem:Q_with_small_KQ}(iv.4). Then $\ind_2=\frac{1}{4}$ and hence $\ind_1<\frac{3}{4}$. This is possible only if $r_1=2$ and the first branch of $Q_1$ is $[3,2,x,3]$. By \ref{lem:case_n=0,p2=4}(iv) we obtain $\frac{1}{\gamma}\leq 1-\frac{1}{4}-\frac{11}{15}$, so $\gamma\geq 60$. Then $\gamma_0+\tau^*=\gamma-1-s\geq 58$, which contradicts \ref{lem:case_n=0,p2=4}(i).
\end{proof}

Let $\binom{c}{p}$ be the characteristic pair of $Q_1$ and let $d=\deg \bar E$. Because $2K_X+E\geq 0$, we have $2K_{\PP^2}+\bar E\geq 0$, so $d\geq 6$. We compute $\ind_1=\frac{c-p}{c}+\frac{p-r}{p}$, where $r$ is the remainder of division of $c$ by $p$. By \ref{lem:case_n=0,p2=4}(iv)
\beq\frac{c-p}{c}+\frac{p-r}{p}+\frac{1}{\gamma}\leq 1.\label{eq:gamma_final}\eeq
The equations \ref{cor:I and II}(i),(ii) give $c=3d+\gamma-p-1$ and $d^2=cp-\gamma$, hence we get a quadratic equation for $d$:
\beq d^2-(3p)d=(\gamma-p-1)p-\gamma.\label{eq:d_final} \eeq
We have $\gamma=\gamma_0+\tau^*+1+s\leq \gamma_0+\tau^*+2\leq 10+2\zeta\leq 14$, so $6\leq \gamma\leq 14$. Since $K\cdot Q_1+3=7-\zeta\leq 7$, \ref{lem:type}(ii) gives $p=\mu(q_1)\leq F_7=13$. In particular, the coefficients of the equation \eqref{eq:d_final} are bounded. Integral solutions of \eqref{eq:d_final} satisfying these bounds do exist, but for all of them the condition \eqref{eq:gamma_final} fails; a contradiction.
\end{proof}

\vfill\eject
\bibliographystyle{amsalpha}
\bibliography{C:/KAROL/PRACA/PUBLIKACJE/BIBL/bibl2}

\providecommand{\bysame}{\leavevmode\hbox to3em{\hrulefill}\thinspace}
\providecommand{\MR}{\relax\ifhmode\unskip\space\fi MR }
\providecommand{\MRhref}[2]{%
  \href{http://www.ams.org/mathscinet-getitem?mr=#1}{#2}
}
\providecommand{\href}[2]{#2}
\begin{thebibliography}{FdBLMHN07}

\bibitem[AM75]{AbhMoh_the_line_thm}
Shreeram~S. Abhyankar and Tzuong~Tsieng Moh, \emph{Embeddings of the line in
  the plane}, J. Reine Angew. Math. \textbf{276} (1975), 148--166.

\bibitem[BHPVdV04]{BHPV}
Wolf~P. Barth, Klaus Hulek, Chris A.~M. Peters, and Antonius Van~de Ven,
  \emph{Compact complex surfaces}, second ed., Results in Mathematics and
  Related Areas. 3rd Series. A Series of Modern Surveys in Mathematics, vol.~4,
  Springer-Verlag, Berlin, 2004.

\bibitem[Coo59]{Coolidge}
Julian~Lowell Coolidge, \emph{A treatise on algebraic plane curves}, Dover
  Publications Inc., New York, 1959.

\bibitem[FdBLMHN07]{FLMN}
J.~Fern{\'a}ndez~de Bobadilla, I.~Luengo, A.~Melle-Hern{\'a}ndez, and
  A.~N{\'e}methi, \emph{On rational cuspidal plane curves, open surfaces and
  local singularities}, Singularity theory, World Sci. Publ., Hackensack, NJ,
  2007, pp.~411--442.

\bibitem[Fuj82]{Fujita-noncomplete_surfaces}
Takao Fujita, \emph{On the topology of noncomplete algebraic surfaces}, J. Fac.
  Sci. Univ. Tokyo Sect. IA Math. \textbf{29} (1982), no.~3, 503--566.

\bibitem[GM96]{GurjarMiyanishi_AMS_and_LZ_thms}
Rajendra~Vasant Gurjar and Masayoshi Miyanishi, \emph{On contractible curves in
  the complex affine plane}, Tohoku Math. J. (2) \textbf{48} (1996), no.~3,
  459--469.

\bibitem[Kor07]{Koras-ab_moh}
Mariusz Koras, \emph{On contractible plane curves}, Affine algebraic geometry,
  Osaka Univ. Press, Osaka, 2007, pp.~275--288.

\bibitem[Miy01]{Miyan-OpenSurf}
Masayoshi Miyanishi, \emph{Open algebraic surfaces}, CRM Monograph Series,
  vol.~12, American Mathematical Society, Providence, RI, 2001.

\bibitem[MKM83]{Kumar-Murthy}
N.~Mohan~Kumar and M.~Pavaman Murthy, \emph{Curves with negative
  self-intersection on rational surfaces}, J. Math. Kyoto Univ. \textbf{22}
  (1982/83), no.~4, 767--777.

\bibitem[MS89]{MaSa-cusp}
Takashi Matsuoka and Fumio Sakai, \emph{The degree of rational cuspidal
  curves}, Math. Ann. \textbf{285} (1989), no.~2, 233--247.

\bibitem[Nag60]{Nagata}
Masayoshi Nagata, \emph{On rational surfaces. {I}. {I}rreducible curves of
  arithmetic genus {$0$}\ or {$1$}}, Mem. Coll. Sci. Univ. Kyoto Ser. A Math.
  \textbf{32} (1960), 351--370.

\bibitem[Pal11]{Palka-exceptional}
Karol Palka, \emph{Exceptional singular {$\mathbb{Q}$}-homology planes}, Ann.
  Inst. Fourier (Grenoble) \textbf{61} (2011), no.~2, 745--774,
  \arxiv{0909.0772}.

\bibitem[Pal14a]{Palka-Coolidge_Nagata1}
\bysame, \emph{The {C}oolidge--{N}agata conjecture, part {I}}, Adv. Math.
  \textbf{267} (2014), 1--43,
  \href{http://arxiv.org/abs/1405.5917}{arXiv:1405.5917}.

\bibitem[Pal14b]{Palka-minimal_models}
\bysame, \emph{Cuspidal curves, minimal models and {Z}aidenberg's finiteness
  conjecture}, \href{http://arxiv.org/abs/1405.5346}{arXiv:1405.5346}, 2014.

\bibitem[Pal14c]{Palka-AMS-LZ}
\bysame, \emph{A new proof of the theorems of {L}in-{Z}aidenberg and
  {A}bhyankar-{M}oh-{S}uzuki},
  \href{http://arxiv.org/abs/1405.5391}{arXiv:1405.5391}, 2014.

\bibitem[Rus80]{Russell2}
Peter Russell, \emph{Hamburger-{N}oether expansions and approximate roots of
  polynomials}, Manuscripta Math. \textbf{31} (1980), no.~1-3, 25--95.

\bibitem[Suz74]{Suzuki_AMSthm}
Masakazu Suzuki, \emph{Propri\'et\'es topologiques des polyn\^omes de deux
  variables, complexes, et automorphismes alg\'ebriques de l'espace {${\bf
  C}^{2}$}}, J. Math. Soc. Japan \textbf{26} (1974), 241--257.

\bibitem[ZL83]{LinZaid_line}
M.~G. Za{\u\i}denberg and V.~Ya. Lin, \emph{An irreducible, simply connected
  algebraic curve in {${\bf, C}\sp{2}$} is equivalent to a quasihomogeneous
  curve}, Dokl. Akad. Nauk SSSR \textbf{271} (1983), no.~5, 1048--1052.

\end{thebibliography}
\end{document}